\documentclass{amsart}





 \usepackage{verbatim}
\usepackage{mathrsfs}
\usepackage{latexsym}
\usepackage{epsfig}
\usepackage{amsmath}
\usepackage{bbm}
\usepackage{graphics}
\usepackage{amssymb}
\usepackage[alphabetic]{amsrefs}
\usepackage{amsopn}
\usepackage{amscd}
\usepackage[all,knot]{xy}
\xyoption{all}
\usepackage{rotating}
\usepackage{hyperref}

\numberwithin{equation}{section}

\theoremstyle{plain}
\newtheorem{thm}{Theorem}[section]
\newtheorem{lem}[thm]{Lemma}
\newtheorem{prop}[thm]{Proposition}

\newtheorem*{thm*}{Theorem}
\newtheorem*{lem*}{Lemma}
\newtheorem*{prop*}{Proposition}
\newtheorem*{cor*}{Corollary}

\theoremstyle{definition}
\newtheorem{defn}[thm]{Definition}
\newtheorem*{defn*}{Definition}

\newtheorem{ex}[thm]{Example}

{}
\newtheorem{rem}[thm]{Remark}
\newtheorem*{rem*}{Remark}

\newtheorem{notation}[thm]{Notation}{}
{}
{}
{}
\newtheorem{aside}[thm]{Aside}

\theoremstyle{remark}
{}
{}
{}

\def\cie{\subseteq}

\def\iso{\cong}

\def\Un{\bigcup}
\def\un{\cup}

\def\intersec{\cap}

\def\to{\longrightarrow}

\def\rimp{\Rightarrow}

\def\ZZ{\mathbb{Z}}

\def\a{\alpha}

\def\e{\epsilon}
\def\l{\lambda}
\def\r{\rho}
\def\s{\sigma}
\def\S{\Sigma}
\def\t{\tau}
\def\D{\mathsf{D}}
\def\K{\mathsf{K}}
\def\sfT{\mathsf{T}}
\def\sfS{\mathsf{S}}
\def\sfI{\mathsf{I}}
\def\sfP{\mathsf{P}}
\def\sfM{\mathsf{M}}
\def\sfQ{\mathsf{Q}}
\def\sfL{\mathsf{L}}

\def\sfC{\mathsf{C}}
\def\sfN{\mathsf{N}}

\DeclareMathOperator{\Spec}{Spec}

\DeclareMathOperator{\Sing}{Sing}
\DeclareMathOperator{\Proj}{Proj}
\DeclareMathOperator{\Spc}{Spc}

\DeclareMathOperator{\supp}{supp}

\DeclareMathOperator{\Hom}{Hom}

\DeclareMathOperator{\cone}{cone}

\DeclareMathOperator{\hocolim}{hocolim}
\DeclareMathOperator{\modu}{mod}
\DeclareMathOperator{\Modu}{Mod}
\DeclareMathOperator{\smodu}{\underline{\modu}}
\DeclareMathOperator{\sModu}{\underline{\Modu}}

\DeclareMathOperator{\Flat}{Flat}

\DeclareMathOperator{\Inj}{Inj}

\DeclareMathOperator{\thick}{thick}
\DeclareMathOperator{\Thick}{Thick}
\DeclareMathOperator{\loc}{loc}
\DeclareMathOperator{\Loc}{Loc}
\DeclareMathOperator{\Thom}{Thom}

\title[Support theory for triangulated categories]{A tour of support theory for triangulated categories through tensor triangular geometry}
\author{Greg Stevenson}
\address{Greg Stevenson, Universit\"at Bielefeld, Fakult\"at f\"ur Mathematik, BIREP Gruppe, Postfach 10\,01\,31, 33501 Bielefeld, Germany.}
\email{gstevens@math.uni-bielefeld.de}

\thanks{I am grateful to the CRM for both the very pleasant and productive environment provided and the opportunity to give the course which led to these notes during the IRTATCA program. Thanks are also due to the participants in the course for helpful feedback and pointing out several typographical misfortunes in the original version of these notes.}

\begin{document}


\begin{abstract}
\noindent These notes attempt to give a short survey of the approach to support theory and the study of lattices of triangulated subcategories through the machinery of tensor triangular geometry. One main aim is to introduce the material necessary to state and prove the local-to-global principle. In particular, we discuss Balmer's construction of the spectrum, generalised Rickard idempotents and support for compactly generated triangulated categories, and actions of tensor triangulated categories. Several examples are also given along the way. These notes are based on a series of lectures given during the Spring 2015 program on `Interactions between Representation Theory, Algebraic Topology and Commutative Algebra' (IRTATCA) at the CRM in Barcelona.
\end{abstract}

\maketitle

\tableofcontents

\section*{Introduction}

The aim of these notes is to give both an introduction to, and an overview of, certain aspects of the developing field of tensor triangular geometry and abstract support varieties. We seek to understand the coarse structure, i.e.\ lattices of suitable subcategories, of triangulated categories. Put another way, given a triangulated category $\sfT$ and objects $X,Y\in \sfT$, we study the question of when one can obtain $Y$ from $X$ by taking cones, suspensions, and (possibly infinite) coproducts.

The notes are structured as follows. In the first section we review work of Paul Balmer in the case of essentially small (rigid) tensor triangulated categories. Here one can introduce a certain topological space, the spectrum, which solves the classification problem for thick tensor ideals. The spectrum is the universal example of a so-called ``support variety'' and provides a conceptual framework which unites earlier results in various examples such as perfect complexes over schemes, stable categories of modular representations, and the finite stable homotopy category. In this section we first meet the notion of supports and lay the foundations for defining supports in more general settings.

In the second section we turn to the infinite case, i.e.\ compactly generated tensor triangulated categories. Following work of Balmer and Favi we use the compact objects and the Balmer spectrum to define a notion of support for objects of such categories. Along the way we discuss smashing localisations and the associated generalised Rickard idempotents which are the key to the definition of the support given in \cite{BaRickard}.

The third section serves as an introduction to actions of tensor triangulated categories. This allows us to define a relative version of the supports introduced in the second section; in this way we can, at least somewhat, escape the tyranny of monoidal structures. After introducing actions, the associated support theory, and outlining some of the fundamental lemmas concerning supports and actions, we come to the main abstract result of this course - the local-to-global principle. This theorem, which already provides new insight in the situation of Section 2, reduces the study of lattices of localising subcategories to the computation of these lattices in smaller (and hopefully simpler) subcategories.

Finally, the fourth section focusses on illustrating some applications of the abstract machinery from the preceding sections; there will be, of course, examples along the way, but in this section we concentrate on giving more details on some more recent examples where the machinery of actions has been successfully applied. In particular, we discuss singularity categories of affine hypersurfaces and the corresponding classification problem for localising subcategories as well as applications to studying derived categories of representations of quivers over arbitrary commutative noetherian rings.

\section{The Balmer spectrum}

The main reference for this section is the paper \cite{BaSpec} by Paul Balmer. We follow his exposition fairly closely, albeit with two major differences: firstly, in order to simplify the discussion, and since we will not require it later on, we do not work in full generality, and secondly we omit many of the technical details. The interested reader should consult \cite{BaSpec}, \cite{BaSSS} and the references within for further details.

This section mainly consists of definitions, but we provide several examples on the way and ultimately get to Balmer's classification of thick tensor ideals in terms of the spectrum.

\subsection{Rigid tensor triangulated categories}

Throughout this section $\K$ will denote an essentially small triangulated category. We use lowercase letters $k,l,m$ for objects of $\K$ and denote its suspension functor by $\S$.

We begin by introducing the main player in this section.

\begin{defn}
An \emph{essentially small tensor triangulated category} is a triple $(\K, \otimes, \mathbf{1})$, where $\K$ is an essentially small triangulated category and $(\otimes, \mathbf{1})$ is a symmetric monoidal structure on $\K$ such that $\otimes$ is an exact functor in each variable. Slightly more explicitly,
\begin{displaymath}
-\otimes-\colon \K\times \K \to \K
\end{displaymath}
is a symmetric monoidal structure on $\K$ with unit $\mathbf{1}$ and with the property that, for all $k\in \K$, the endofunctors $k\otimes -$ and $-\otimes k$ are exact.
\end{defn}

\begin{rem}
Throughout we shall not generally make explicit the associativity, symmetry, and unit constraints for the symmetric monoidal structure on a tensor triangulated category. By standard coherence results for monoidal structures this will not get us into any trouble.
\end{rem}

\begin{defn}\label{defn_rigid}
Let $\K$ be an essentially small tensor triangulated category. Assume that $\K$ is closed symmetric monoidal, i.e.\ for each $k\in \K$ the functor $k\otimes-$ has a right adjoint which we denote $\hom(k,-)$. These functors can be assembled into a bifunctor $\hom(-,-)$ which we call the \emph{internal hom} of $\K$. By definition one has, for all $k,l,m \in \K$, the tensor-hom adjunction
\begin{displaymath}
\K(k\otimes l, m) \cong \K(l, \hom(k,m)),
\end{displaymath}
with corresponding units and counits
\begin{displaymath}
\eta_{k,l} \colon l \to \hom(k,k\otimes l) \quad \text{and} \quad \epsilon_{k,l} \colon \hom(k,l)\otimes k \to l.
\end{displaymath}
The \emph{dual} of $k\in \K$ is the object
\begin{displaymath}
k^\vee = \hom(k,\mathbf{1}).
\end{displaymath}
Given $k,l \in \K$ there is a natural \emph{evaluation map}
\begin{displaymath}
k^\vee \otimes l \to \hom(k,l),
\end{displaymath}
which is defined by following the identity map on $l$ through the composite
\begin{displaymath}
\xymatrix{
\K(l,l) \ar[r]^-\sim & \K(l\otimes \mathbf{1}, l) \ar[rr]^-{\K(l\otimes \e_{k,\mathbf{1}}, l)} && \K(k\otimes k^\vee \otimes l, l) \ar[r]^-\sim & \K(k^\vee \otimes l, \hom(k,l)).
}
\end{displaymath}
We say that $\K$ is \emph{rigid} if for all $k,l \in \K$ this natural evaluation map is an isomorphism
\begin{displaymath}
k^\vee \otimes l \stackrel{\sim}{\to} \hom(k,l).
\end{displaymath}
\end{defn}

\begin{rem}
If $\K$ is rigid then, given $k\in \K$, there is a natural isomorphism
\begin{displaymath}
(k^\vee)^\vee \cong k,
\end{displaymath}
and the functor $k^\vee\otimes -$ is both a left and a right adjoint to $k\otimes-$, the functor given by tensoring with $k$.
\end{rem}

\begin{ex}
Let us provide some standard examples of essentially small rigid tensor triangulated categories:
\begin{itemize}
\item[(1)] Given a commutative ring $R$ the category $\D^\mathrm{perf}(R)$ of \emph{perfect complexes} i.e., those complexes in the derived category which are quasi-isomorphic to a bounded complex of finitely generated projectives, is symmetric monoidal via the left derived tensor product $\otimes^\mathbf{L}_R$ (which we will usually denote just as $\otimes_R$ or $\otimes$ if the ring is clear) with unit the stalk complex $R$ sitting in degree $0$. The category $\D^\mathrm{perf}(R)$ is easily checked to be rigid.
\item[(2)] Let $G$ be a finite group and $k$ be a field whose characteristic divides the order of $G$ (this is not necessary but rules out trivial cases). We write $\modu kG$ for the category of finite dimensional $kG$-modules. This is a Frobenius category, so its stable category $\smodu kG$, which is obtained by factoring out maps factoring through projectives, is triangulated (see \cite{Happeltricat} for instance for more details). Moreover, it is a rigid tensor triangulated category via the usual tensor product $\otimes_k$ with the diagonal action and unit object the trivial representation $k$.
\item[(3)] The finite stable homotopy category $\mathrm{SH}^\mathrm{fin}$ together with the smash product of spectra is a rigid tensor triangulated category with unit object the sphere spectrum $S^0$.
\end{itemize}
\end{ex}

\subsection{Ideals and the spectrum}
From this point onward $\K$ denotes an essentially small rigid tensor triangulated category with tensor product $\otimes$ and unit $\mathbf{1}$. For much of what follows the rigidity assumption is overkill, but it simplifies the discussion in several places and will be a necessary hypotheses in the sections to come.

We begin by recalling the subcategories of $\K$ with which we will be concerned. All subcategories of $\K$ are assumed to be full and replete (i.e. closed under isomorphisms). The simple, but beautiful, idea is to view $\K$ as a very strange sort of ring. One takes thick subcategories to be the analogue of additive subgroups and then defines ideals and prime ideals in the naive way.

\begin{defn}
A triangulated subcategory $\sfI$ of $\K$ is \emph{thick} if it is closed under taking direct summands, i.e.\ if $k\oplus k'\in \sfI$ then both $k$ and $k'$ lie in $\sfI$. To be very explicit: a full subcategory $\sfI$ of $\K$ is thick if it is closed under suspensions, cones, and direct summands. 

A thick subcategory $\sfI$ of $\K$ is a (thick) \emph{tensor-ideal} if given any $k\in \K$ and $l\in \sfI$ the tensor product $k\otimes l$ lies in $\sfI$. Put another way we require the functor
\begin{displaymath}
\K \times \sfI \stackrel{\otimes}{\to} \K
\end{displaymath}
to factor through $\sfI$.

Finally, a proper thick tensor-ideal $\sfP$ of $\K$ is \emph{prime} if given $k,l\in \K$ such that $k\otimes l \in \sfP$ then at least one of $k$ or $l$ lies in $\sfP$. We will often just call prime tensor-ideals in $\K$ prime ideals.
\end{defn}

\begin{rem}
Since we have assumed $\K$ is essentially small the collections of thick subcategories, thick tensor-ideals, and prime tensor-ideals each form a set.
\end{rem}

\begin{rem}\label{rem_rigid_facts}
Rigidity of $\K$ provides the following simplification when dealing with tensor-ideals. Given $k\in \K$ the adjunctions between $k\otimes-$ and $k^\vee\otimes-$ imply that $k$ is a summand of $k\otimes k \otimes k^\vee$ and that $k^\vee$ is a summand of $k\otimes k^\vee \otimes k^\vee$. The former implies that every tensor-ideal is radical, i.e.\ if $\sfI$ is a tensor-ideal and $k^{\otimes n} \in \sfI$ then $k\in \sfI$. The latter implies that every tensor-ideal is closed under taking duals.

An important special case of the above discussion is the following: if $k\in \K$ is nilpotent, i.e.\ $k^{\otimes n}\cong 0$, then $k\cong 0$.
\end{rem}

Given a collection of objects $S\cie \K$ we denote by $\thick(S)$ (resp.\ $\thick^\otimes(S)$) the smallest thick subcategory (resp.\ thick tensor-ideal) containing $S$. We use $\Thick(\K)$ and $\Thick^\otimes(K)$ respectively to denote the sets of thick subcategories and thick tensor-ideals of $\K$. Both of these sets of subcategories are naturally ordered by inclusion and form complete lattices whose meet is given by intersection.

\begin{lem}\label{lem_small_unit_gen}
Suppose that $\K$ is generated by the tensor unit, i.e.\ $\thick(\mathbf{1}) = \K$. Then every thick subcategory is a tensor-ideal:
\begin{displaymath}
\Thick(\K) = \Thick^\otimes(K).
\end{displaymath}
\end{lem}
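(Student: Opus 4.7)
The plan is to fix a thick subcategory $\sfI \cie \K$ and show it absorbs tensoring with arbitrary objects. The standard trick is to fix an $l \in \sfI$ and consider the ``conductor'' subcategory
\begin{displaymath}
\sfJ_l = \{\, k \in \K \mid k \otimes l \in \sfI \,\}
\end{displaymath}
and argue that it exhausts $\K$ using the hypothesis $\thick(\mathbf{1}) = \K$.

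First I would verify that $\mathbf{1} \in \sfJ_l$, which is immediate since $\mathbf{1} \otimes l \cong l \in \sfI$. Then I would check that $\sfJ_l$ is itself a thick subcategory of $\K$. This uses the definition of a tensor triangulated category in an essential way: because $-\otimes l$ is an exact functor, it sends distinguished triangles to distinguished triangles and commutes with $\S$; because it is additive, it commutes with direct sums and preserves summands. Combined with the assumption that $\sfI$ is thick (closed under suspensions, cones, and summands), each of these closure properties transfers to $\sfJ_l$.

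Once $\sfJ_l$ is shown to be a thick subcategory containing $\mathbf{1}$, the hypothesis $\thick(\mathbf{1}) = \K$ forces $\sfJ_l = \K$. Since $l \in \sfI$ was arbitrary, this is exactly the statement that $k \otimes l \in \sfI$ for all $k \in \K$ and $l \in \sfI$, i.e., $\sfI$ is a tensor-ideal. The reverse inclusion $\Thick^\otimes(\K) \cie \Thick(\K)$ is tautological.

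There is essentially no obstacle here beyond correctly unpacking the definitions; the only mildly delicate point is remembering that exactness of $-\otimes l$ (and not just of $l\otimes -$) is built into the definition of a tensor triangulated category, so that closure of $\sfJ_l$ under cones goes through without needing to invoke the symmetry constraint separately. Rigidity is not used.
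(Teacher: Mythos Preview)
Your argument is correct and is exactly the standard one; the paper itself leaves this lemma as an exercise, so there is nothing to compare against beyond noting that the conductor-subcategory trick you use is the intended solution.
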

\begin{proof}
Exercise.
\end{proof}

\begin{ex}
The lemma applies to both $\D^\mathrm{perf}(R)$ and $\mathrm{SH}^\mathrm{fin}$ which are generated by their respective tensor units $R$ and $S^0$. It is not necessarily the case that $\smodu kG$ is generated by its tensor unit $k$. However, if $G$ is a $p$-group then the trivial module $k$ is the unique simple  $kG$-module and thus generates $\smodu kG$.
\end{ex}

The prime ideals, somewhat unsurprisingly, receive special attention (and notation).

\begin{defn}
The \emph{spectrum of} $\K$ is the set
\begin{displaymath}
\Spc\K = \{\sfP \cie \K \; \vert \; \sfP \;\text{is prime}\}
\end{displaymath}
of prime ideals of $\K$. 
\end{defn}

We next record some elementary but crucial facts about prime ideals and $\Spc \K$.

\begin{prop}[\cite{BaSpec}*{Proposition~2.3}]
Let $\K$ be as above.
\begin{itemize}
\item[$(a)$] Let $S$ be a set of objects of $\K$ containing $\mathbf{1}$ and such that if $k,l \in S$ then $k\otimes l \in S$. If $S$ does not contain $0$ then there exists a $\sfP\in \Spc \K$ such that $\sfP \cap S = \varnothing$.
\item[$(b)$] For any proper thick tensor-ideal $\sfI \subsetneq \K$ there exists a maximal proper thick tensor-ideal $\sfM$ with $\sfI \cie \sfM$.
\item[$(c)$] Maximal proper thick tensor-ideals are prime.
\item[$(d)$] The spectrum is not empty: $\Spc \K \neq \varnothing$. 
\end{itemize}
\end{prop}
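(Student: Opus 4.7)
The plan is to establish (a) first via a Zorn's-lemma argument, and then to derive (b), (c) and (d) as standard consequences mirroring the commutative-algebra development.

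For (a), I would apply Zorn's lemma to the poset $\mathcal{F}$ of thick tensor-ideals $\sfI$ of $\K$ with $\sfI \cap S = \varnothing$, ordered by inclusion. Nonemptiness is immediate ($0 \notin S$ so the zero ideal lies in $\mathcal{F}$), and a union of a chain of thick tensor-ideals is again a thick tensor-ideal since both properties are finitary, and such a union meets $S$ only if some member of the chain does. Let $\sfP$ be a maximal element of $\mathcal{F}$. I would then argue primeness by contradiction: if $k \otimes l \in \sfP$ with $k, l \notin \sfP$, maximality of $\sfP$ forces $\thick^\otimes(\sfP \cup \{k\})$ and $\thick^\otimes(\sfP \cup \{l\})$ each to meet $S$, say at $s$ and $s'$, whence $s \otimes s' \in S$ by closure of $S$ under tensor.

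The main obstacle is the containment
\begin{displaymath}
\thick^\otimes(\sfP \cup \{k\}) \otimes \thick^\otimes(\sfP \cup \{l\}) \cie \sfP,
\end{displaymath}
which then forces $s \otimes s' \in \sfP \cap S = \varnothing$, a contradiction. I would prove this containment by a double application of the standard trick: for any fixed object $x$, the collection $\{y \in \K : x \otimes y \in \sfP\}$ is a thick tensor-ideal of $\K$ (since $\sfP$ is a tensor-ideal). Applying this first to $x = k$ and noting that the corresponding subcategory contains $\sfP$ and $l$ shows $k \otimes \thick^\otimes(\sfP \cup \{l\}) \cie \sfP$; running the same argument on the other factor with an arbitrary element of $\thick^\otimes(\sfP \cup \{l\})$ in place of $l$ gives the required inclusion.

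For (b), I would apply Zorn's lemma to the poset of proper thick tensor-ideals containing $\sfI$: a thick tensor-ideal $\sfJ$ is proper iff $\mathbf{1} \notin \sfJ$ (because $k \cong k \otimes \mathbf{1}$ forces $\sfJ = \K$ whenever $\mathbf{1} \in \sfJ$), so a chain of proper thick tensor-ideals has proper union. For (c), I would observe that ``proper'' is precisely ``disjoint from $\{\mathbf{1}\}$'', so any maximal proper thick tensor-ideal is maximal among those disjoint from the multiplicative set $S = \{\mathbf{1}\}$, and is therefore prime by (a); alternatively one can argue directly, using the same containment as in (a) to show that $k \otimes l \in \sfM$ and $k, l \notin \sfM$ would force $\mathbf{1} \in \sfM$. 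Finally (d) is immediate by combining (b) and (c) applied to the zero ideal (proper because of the implicit nondegeneracy $\mathbf{1} \not\cong 0$), or equivalently by applying (a) directly with $S = \{\mathbf{1}\}$.
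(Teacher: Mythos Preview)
Your proof is correct and follows exactly the standard Zorn's-lemma argument. The paper does not actually give a proof of this proposition: it simply cites Balmer's original paper \cite{BaSpec}*{Proposition~2.3} and remarks that ``as in the common proof of the analogous statement in commutative algebra, the proof of this proposition appeals to Zorn's lemma,'' which is precisely what you do.
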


\begin{rem}
It is worth noting that, as in the common proof of the analogous statement in commutative algebra, the proof of this proposition appeals to Zorn's lemma.
\end{rem}

\subsection{Supports and the Zariski topology}

We now define, for each object $k\in \K$, a subset of prime ideals at which $k$ is ``non-zero''. This is the central construction of the section, allowing us to both put a topology on $\Spc \K$ and to understand $\Thick^\otimes(\K)$ in terms of the resulting topological space.

\begin{defn}
Let $k$ be an object of $\K$. The \emph{support} of $k$ is the subset
\begin{displaymath}
\supp k = \{\sfP \in \Spc\K \; \vert \; k\notin \sfP\}.
\end{displaymath}
We denote by
\begin{displaymath}
U(k) = \{\sfP\in \Spc \K \; \vert \; k\in \sfP\}
\end{displaymath}
the complement of $\supp k$. 
\end{defn}

Let us give some initial intuition for the way in which one can think of $k$ as being supported at $\sfP\in \supp k$. We can form the Verdier quotient $\K/\sfP$ of $\K$ by $\sfP$, which comes with a canonical projection $\pi\colon \K \to \K/\sfP$. Since $\sfP$ is thick the kernel of $\pi$ is precisely $\sfP$ and so, as $k\notin \sfP$, the object $\pi(k)$ is non-zero in the quotient; this is the sense in which $k$ is supported at $\sfP$. The reason this may, at first, look at odds with the analogous definition in commutative algebra is that one inverts morphisms in a triangulated category by taking such quotients. One can, at least in many cases, make a precise connection between the support defined above and the usual support of modules over a (graded) commutative ring.

We now list the main properties of the support.

\begin{lem}[\cite{BaSpec}*{Lemma~2.6}]\label{lem_supp_props}
The assignment $k \mapsto \supp k$ given by the support satisfies the following properties:
\begin{itemize}
\item[$(a)$] $\supp \mathbf{1} = \Spc\K$ and $\supp 0 = \varnothing$;
\item[$(b)$] $\supp(k\oplus l) = \supp(k) \cup \supp(l)$;
\item[$(c)$] $\supp(\S k) = \supp k$;
\item[$(d)$] for any distinguished triangle
\begin{displaymath}
k \to l \to m \to \S k
\end{displaymath}
in $\K$ there is a containment 
\begin{displaymath}
\supp l \cie (\supp k \cup \supp m);
\end{displaymath}
\item[$(e)$] $\supp(k\otimes l) = \supp(k) \cap \supp(l)$.
\end{itemize}
\end{lem}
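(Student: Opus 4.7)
The plan is to verify each of the five clauses by unwinding the definitions of prime, thick, and tensor-ideal, and where convenient to argue using the complement $U(k) = \{\sfP : k \in \sfP\}$ rather than $\supp k$ itself. Each clause reduces to a one-line membership argument, so my goal is to identify, for every case, the exact property (summand-closure, triangle-closure, ideal-closure, primality) being invoked.

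For (a), the key observation is that any thick tensor-ideal containing $\mathbf{1}$ contains $\mathbf{1}\otimes k = k$ for every $k\in \K$ and is therefore not proper; since every prime is proper, $\mathbf{1}\notin \sfP$ for all $\sfP\in \Spc\K$, which gives $\supp\mathbf{1}=\Spc\K$. Conversely $0$ is a direct summand of every object (via $k\cong k\oplus 0$), so closure under summands forces $0\in \sfP$ for every $\sfP$, hence $\supp 0=\varnothing$. For (b), I would note that $\sfP \in U(k\oplus l)$ iff $k\oplus l \in \sfP$, which by closure under summands is equivalent to $k,l\in \sfP$, i.e.\ $\sfP \in U(k)\cap U(l)$; passing to complements yields (b). For (c), since $\sfP$ is a triangulated subcategory it is closed under $\S$ and $\S^{-1}$, so $k\in \sfP \iff \S k\in \sfP$.

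Clause (d) is the only one that uses triangles: I would prove the contrapositive, showing $U(k)\cap U(m) \cie U(l)$. If $k,m\in \sfP$ then the triangle $k\to l\to m\to \S k$ together with the fact that thick subcategories satisfy the $2$-out-of-$3$ property for distinguished triangles yields $l\in \sfP$, so $\sfP\in U(l)$. Taking complements of the inclusion $U(k)\cap U(m)\cie U(l)$ gives $\supp l \cie \supp k \cup \supp m$.

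The only clause that requires both structural assumptions on $\sfP$ is (e). For the inclusion $\supp(k\otimes l)\cie \supp k \cap \supp l$, I would use that $\sfP$ is a tensor-ideal: if $k\in \sfP$ then $k\otimes l\in \sfP$, so $U(k)\cie U(k\otimes l)$, and symmetrically for $l$, giving $U(k)\cup U(l)\cie U(k\otimes l)$. For the reverse inclusion, I would use primality directly: if $\sfP\in \supp k\cap \supp l$ then $k\notin \sfP$ and $l\notin \sfP$, so by primality $k\otimes l\notin \sfP$, i.e.\ $\sfP\in \supp(k\otimes l)$. None of the five steps is a serious obstacle; the only conceptual point worth flagging is that (e) is the clause where primality of $\sfP$ is genuinely needed, as it is precisely the property designed to make tensor supports behave like intersections—exactly analogous to $V(I)\cap V(J)=V(IJ)$ in the Zariski topology of $\Spec R$.
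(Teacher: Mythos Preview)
Your proof is correct and follows exactly the approach the paper itself indicates: the paper does not spell out the argument but refers to Balmer's original, notes that the statements are proved there for the complements $U(k)$, and invites the reader to verify them as an instructive exercise, which is precisely what you have done. The only cosmetic point is that in (b) the implication $k,l\in\sfP\Rightarrow k\oplus l\in\sfP$ uses closure under extensions (via the split triangle) rather than closure under summands, but this is immediate for any triangulated subcategory.
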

\begin{proof}
We refer to Balmer's paper for the details, where the corresponding results are proved for the subsets $U(k)$. These properties, as stated above, appear in \cite{BaSpec}*{Definition~3.1}. However, we suggest proving these statements as doing so provides an instructive exercise.
\end{proof}

Another very important property of the support is that it can detect whether or not an object is zero.

\begin{lem}
Given $k\in \K$ we have $\supp k = \varnothing$ if and only if $k\cong 0$.
\end{lem}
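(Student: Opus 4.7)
The forward direction is immediate: if $k \cong 0$, then Lemma~\ref{lem_supp_props}$(a)$ gives $\supp k = \supp 0 = \varnothing$ (support being manifestly invariant under isomorphism by the definition of a prime ideal as replete).

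For the converse, I would argue by contrapositive: assuming $k \not\cong 0$, I shall produce a prime $\sfP \in \Spc \K$ with $k \notin \sfP$, thereby exhibiting an element of $\supp k$. The natural candidate multiplicatively closed set to feed into part $(a)$ of the preceding Proposition is
\begin{displaymath}
S = \{k^{\otimes n} \mid n \geq 0\},
\end{displaymath}
closed up to isomorphism, where we set $k^{\otimes 0} = \mathbf{1}$. By construction $S$ contains $\mathbf{1}$ and is closed under $\otimes$.

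The crux is verifying $0 \notin S$, and this is exactly where rigidity enters. The second half of Remark~\ref{rem_rigid_facts} records that in a rigid tensor triangulated category a nonzero object has no nilpotent tensor powers: if $k^{\otimes n} \cong 0$ then $k \cong 0$. Since we have assumed $k \not\cong 0$, this yields $k^{\otimes n} \not\cong 0$ for every $n \geq 1$, and $\mathbf{1} \not\cong 0$ holds as well (else $k \cong k \otimes \mathbf{1} \cong 0$). Hence $S$ avoids $0$.

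Applying Proposition~\ref{lem_small_unit_gen}'s predecessor (the proposition on prime ideals), part $(a)$ supplies a prime $\sfP \in \Spc\K$ with $\sfP \cap S = \varnothing$. In particular $k \in S$ forces $k \notin \sfP$, so $\sfP \in \supp k$ and $\supp k \neq \varnothing$, completing the contrapositive. I do not anticipate any significant obstacle here; the only subtle point is recognising that rigidity is the hypothesis that turns the abstract prime-avoidance statement into a concrete detection result, via the non-nilpotence observation in Remark~\ref{rem_rigid_facts}.
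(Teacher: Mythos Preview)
Your proof is correct and follows essentially the same route as the paper. The paper invokes \cite{BaSpec}*{Corollary~2.4} to say that $\supp k = \varnothing$ if and only if $k$ is $\otimes$-nilpotent, and then uses Remark~\ref{rem_rigid_facts} to conclude; you have simply unpacked that corollary by applying the prime-avoidance Proposition directly to the multiplicative set $S=\{k^{\otimes n}\}$, which is exactly how Balmer's corollary is proved.
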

\begin{proof}
By \cite{BaSpec}*{Corollary~2.4} the support of $k$ is empty if and only if $k$ is tensor-nilpotent i.e., $k^{\otimes n}\cong 0$. As $\K$ is rigid there are no non-zero tensor-nilpotent objects by Remark~\ref{rem_rigid_facts}.
\end{proof}

A fairly immediate consequence of Lemma~\ref{lem_supp_props} is that the family of subsets $\{\supp k \; \vert \; k\in \K\}$ form a basis of closed subsets for a topology on $\Spc\K$.

\begin{defn}
The \emph{Zariski topology} on $\Spc\K$ is the topology defined by the basis of closed subsets
\begin{displaymath}
\{\supp k \; \vert \; k\in \K\}
\end{displaymath}
The closed subsets of $\Spc\K$ are of the form
\begin{align*}
Z(S) &= \{\sfP \in \Spc\K \; \vert \; S\cap \sfP = \varnothing\} \\
&= \bigcap_{k\in S} \supp(k)
\end{align*}
where $S\cie \K$ is an arbitrary family of objects.
\end{defn}

Before continuing to the universal property of $\Spc\K$ and the classification theorem we discuss some topological properties of $\Spc\K$.

\begin{prop}[\cite{BaSpec}*{Proposition~2.9}]
Let $\sfP \in \Spc\K$. The closure of $\sfP$ is
\begin{displaymath}
\overline{\{\sfP\}} = \{\sfQ\in \Spc\K \; \vert \; \sfQ\cie \sfP\}.
\end{displaymath}
In particular, $\Spc\K$ is $T_0$ i.e., for $\sfP_1,\sfP_2 \in \Spc\K$
\begin{displaymath}
\overline{\{\sfP_1\}} = \overline{\{\sfP_2\}} \; \rimp \; \sfP_1 = \sfP_2.
\end{displaymath}
\end{prop}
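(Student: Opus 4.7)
The plan is to compute $\overline{\{\sfP\}}$ directly as the intersection of all closed subsets of $\Spc\K$ that contain $\sfP$, exploiting the explicit basis of closed sets $\{\supp(k) \mid k \in \K\}$ furnished by the definition of the Zariski topology. The $T_0$ statement will then be an immediate formal consequence of the first assertion.

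First I would recall that for any topological space with a basis $\mathcal{B}$ of closed sets, the closure of a point is the intersection of the basic closed sets containing it: an arbitrary closed set is, by definition, an intersection of elements of $\mathcal{B}$, so a basic closed set contains $\sfP$ whenever some closed superset does. Thus
\begin{displaymath}
\overline{\{\sfP\}} \;=\; \bigcap_{\sfP\in \supp(k)} \supp(k) \;=\; \bigcap_{k\notin \sfP} \supp(k).
\end{displaymath}
Unwinding the definition of support, $\sfQ$ belongs to this intersection if and only if for every $k\in \K$ with $k\notin \sfP$ we also have $k\notin \sfQ$. This says exactly that $\K\setminus \sfP \cie \K\setminus \sfQ$, i.e.\ $\sfQ \cie \sfP$. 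This gives the equality
\begin{displaymath}
\overline{\{\sfP\}} \;=\; \{\sfQ\in \Spc\K \mid \sfQ\cie \sfP\}.
\end{displaymath}

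For the $T_0$ assertion, suppose $\overline{\{\sfP_1\}} = \overline{\{\sfP_2\}}$. Since $\sfP_i\in \overline{\{\sfP_i\}}$ for $i=1,2$, applying the formula just established yields $\sfP_1\cie \sfP_2$ and $\sfP_2\cie \sfP_1$, hence $\sfP_1 = \sfP_2$.

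There is no real obstacle here; the only mildly delicate point is being comfortable with the fact that working with a basis of closed sets really does allow the reduction in the first display, and the bookkeeping that converts the membership condition $k\notin \sfP \Rightarrow k\notin \sfQ$ into the containment $\sfQ\cie \sfP$. Both are purely formal once one has Lemma~\ref{lem_supp_props} in hand to justify that $\{\supp(k)\}$ genuinely forms a basis for the topology.
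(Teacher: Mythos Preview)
Your proof is correct and follows essentially the same route as the paper's. The paper packages the argument using the notation $Z(S_0)$ with $S_0=\K\setminus\sfP$ and shows $Z(S_0)$ is the smallest closed set containing $\sfP$; your intersection $\bigcap_{k\notin\sfP}\supp(k)$ is exactly $Z(S_0)$, and the unwinding to $\sfQ\cie\sfP$ is identical.
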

\begin{proof}
Let $S_0 = \K\setminus \sfP$ denote the complement of $\sfP$. It is immediate that $\sfP \in Z(S_0)$ and one easily checks that if $\sfP\in Z(S)$ then $S\cie S_0$. Thus for any such $S$ we have $Z(S_0)\cie Z(S)$, i.e.\ $Z(S_0)$ is the smallest closed subset containing $\sfP$. This shows
\begin{displaymath}
\overline{\{\sfP\}} = Z(S_0) = \{\sfQ\in \Spc\K \; \vert \; \sfQ\cie \sfP\}
\end{displaymath}
as claimed. The assertion that $\Spc\K$ is $T_0$ follows immediately.
\end{proof}

\begin{rem}\label{rem_gymnastics}
This proposition is the first indication of the mental gymnastics that occur when dealing with $\Spc \K$ versus $\Spec R$ for a commutative ring $R$. A wealth of further information on the relationship between prime ideals in $R$ and prime tensor-ideals in $\Spc \D^\mathrm{perf}(R)$ can be found in \cite{BaSSS}.
\end{rem}

In fact $\Spc\K$ is much more than just a $T_0$ space.

\begin{defn}\label{defn_spectral}
A topological space $X$ is a \emph{spectral space} if it verifies the following properties:
\begin{itemize}
\item[(1)] $X$ is $T_0$;
\item[(2)] $X$ is quasi-compact;
\item[(3)] the quasi-compact open subsets of $X$ are closed under finite intersections and form an open basis of $X$;
\item[(4)] every non-empty irreducible closed subset of $X$ has a generic point.
\end{itemize}
Given spectral spaces $X$ and $Y$, a \emph{spectral map} $f\colon X\to Y$ is a continuous map such that for any quasi-compact open $U\cie Y$ the preimage $f^{-1}(U)$ is quasi-compact. 
\end{defn}

Typical examples of spectral spaces are given by $\Spec R$ where $R$ is a commutative ring. In fact Hochster has shown in \cite{HochsterSpectral} that any spectral topological space is of this form. Further examples include the topological space underlying any quasi-compact and quasi-separated scheme. Another class of examples is provided by the following result.

\begin{thm}[\cite{BKS}]\label{thm_spectral}
Let $\K$ be as above. The space $\Spc\K$ is spectral.
\end{thm}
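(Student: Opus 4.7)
The plan is to verify the four conditions of Definition~\ref{defn_spectral} in turn. Condition $(1)$ is immediate from the closure formula $\overline{\{\sfP\}}=\{\sfQ : \sfQ\cie\sfP\}$ established just above, and condition $(3)$ is essentially formal: the collection $\{U(k) : k \in \K\}$ is an open basis because $\{\supp k\}$ is by construction a basis of closed sets, and it is closed under finite intersections via $U(k)\cap U(l) = U(k\oplus l)$, which follows from Lemma~\ref{lem_supp_props}$(b)$. Since $\Spc\K = U(0)$, condition $(2)$ will be a consequence of the claim that every basic open $U(k)$ is quasi-compact, which I treat next.

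For the quasi-compactness of the whole space, suppose $\Spc\K = \bigcup_i U(k_i)$, equivalently $\bigcap_i \supp k_i = \varnothing$. If no finite subcover exists, then by Lemma~\ref{lem_supp_props}$(e)$ together with the fact that support detects the zero object, every finite tensor product $k_{i_1}\otimes\cdots\otimes k_{i_n}$ is non-zero. The $\otimes$-multiplicative set generated by $\{k_i\}_i\cup\{\mathbf{1}\}$ then avoids $0$, and part $(a)$ of \cite{BaSpec}*{Proposition~2.3} produces a prime $\sfP$ disjoint from it; but then $\sfP\in\bigcap_i\supp k_i=\varnothing$, contradiction. For a general basic open $U(k)$ I would reduce to this case by passing to the Verdier quotient $\K/\thick^\otimes(k)$, which is again essentially small rigid tensor triangulated. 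Its spectrum is naturally homeomorphic to $U(k)\cie\Spc\K$, since primes of the quotient correspond bijectively to primes of $\K$ containing $\thick^\otimes(k)$, and $\thick^\otimes(k)\cie\sfP$ iff $k\in\sfP$. Thus quasi-compactness of $U(k)$ reduces to that of the spectrum of the quotient, already established.

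For condition $(4)$, given a non-empty irreducible closed subset $Z\cie\Spc\K$, I would construct its generic point explicitly by
\[
\sfP \;:=\; \{k\in \K \mid Z\notcie \supp k\}.
\]
Properness is immediate from $\supp\mathbf{1}=\Spc\K\supseteq Z$. Closure of $\sfP$ under suspension, under $-\otimes x$ for $x\in\K$, and under direct summands is routine from parts $(c)$, $(e)$, and $(b)$ of Lemma~\ref{lem_supp_props}. The crucial use of irreducibility of $Z$ enters when checking closure under direct sums and under cones: both reduce to the implication $Z\cie \supp k\cup\supp l \Rightarrow Z\cie\supp k \text{ or } Z\cie\supp l$, which is precisely the definition of $Z$ being irreducible, applied to the closed sets $\supp k$ and $\supp l$. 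Primeness of $\sfP$ is then automatic from $\supp(k\otimes l)=\supp k\cap \supp l$, since $Z\notcie A\cap B$ forces $Z\notcie A$ or $Z\notcie B$. Every closed subset of $\Spc\K$ is an intersection of basic closed sets, so $Z=\bigcap\{\supp k : Z\cie\supp k\}$; hence $\sfQ\in Z$ iff $\sfQ\cie\sfP$, and by the closure formula this set is exactly $\overline{\{\sfP\}}$. Uniqueness of the generic point is immediate from $T_0$.

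I expect the sobriety step to be the main obstacle: identifying the correct candidate $\sfP$ takes some thought, and the genuinely interesting point is that irreducibility is needed not for the primeness of $\sfP$ but rather for its additive and triangulated closure. The quasi-compactness arguments, by contrast, are essentially prime-avoidance tricks powered by \cite{BaSpec}*{Proposition~2.3}$(a)$, applied once directly and once to a Verdier quotient.
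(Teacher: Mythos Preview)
The paper does not actually prove this theorem; it is simply quoted from \cite{BKS}. Conditions $(1)$--$(3)$ of Definition~\ref{defn_spectral} are, as you note, already covered by the surrounding material (the closure formula and Proposition~\ref{prop_spc_props}, both cited from \cite{BaSpec}), so the genuine content of the citation to \cite{BKS} is sobriety.

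Your argument is correct. Two remarks. First, your route to quasi-compactness of $U(k)$ via the Verdier quotient $\K/\thick^\otimes(k)$ is slightly different from Balmer's original direct argument in \cite{BaSpec}*{Proposition~2.14}, but it works: rigidity does descend to the quotient by a thick tensor-ideal (the dual of $k$ in the quotient is the image of $k^\vee$, and the evaluation map descends), so your appeal to ``support detects zero'' there is legitimate, and the identification of $\Spc(\K/\thick^\otimes(k))$ with $U(k)$ is \cite{BaSpec}*{Proposition~3.11}. Second, your construction of the generic point $\sfP=\{k:Z\notcie\supp k\}$ and the observation that irreducibility of $Z$ is what forces \emph{triangulated} closure of $\sfP$ (rather than primeness, which is automatic) is exactly the argument given in \cite{BKS}.
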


Thus the spaces we produce by taking spectra of tensor triangulated categories are particularly nice and enjoy many desirable properties.

\begin{aside}
There is a deep connection between spectral spaces and the theory of particularly nice lattices (coherent frames to be precise). One consequence of this is that the topology of a spectral space is determined by the specialisation ordering on points. This ordering is given by defining, for a spectral space $X$, a point $y\in X$ to be a specialisation of $x\in X$ if $y$ is in the closure of $x$. Viewed through the lens of lattice theory, the key fact which makes Balmer's theory work so well is that the lattice of thick tensor-ideals is distributive. More details on this point of view can be found in \cite{KockPitsch}.
\end{aside}

As a complement to Theorem~\ref{thm_spectral} let us record a few related facts which appear in Balmer's work.

\begin{prop}\label{prop_spc_props}
For any rigid tensor triangulated category $\K$ the following assertions hold.
\begin{itemize}
\item[$(a)$] Any non-empty closed subset of $\Spc\K$ contains at least one closed point.
\item[$(b)$] For any $k\in \K$ the open subset $U(k) = \Spc\K \setminus \supp(k)$ is quasi-compact.
\item[$(c)$] Any quasi-compact open subset of $\Spc\K$ is of the form $U(k)$ for some $k\in \K$.
\end{itemize}
\end{prop}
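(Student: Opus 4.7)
The plan is to handle the three assertions in turn; (c) will follow from (b) almost formally, while (a) and (b) each rest on a Zorn-style argument in the poset of primes.

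For (a), recall from the preceding proposition that $\overline{\{\sfP\}} = \{\sfQ \in \Spc\K : \sfQ \cie \sfP\}$, so the closed points of $\Spc\K$ are precisely the primes which are \emph{minimal} under inclusion. Given a non-empty closed set $Z = Z(S) = \{\sfP : \sfP \cap S = \varnothing\}$ for some $S \cie \K$, I would order $Z$ by inclusion and apply the dual form of Zorn's lemma: for a descending chain $(\sfP_i)_{i \in I}$ in $Z$, the natural lower bound is $\sfP_\infty = \Intersec_i \sfP_i$, which is proper, satisfies $\sfP_\infty \cap S = \varnothing$, and is prime by the following chain observation: if $a \otimes b \in \sfP_\infty$, the index subsets $\{i : a \in \sfP_i\}$ and $\{i : b \in \sfP_i\}$ are down-closed in $I$ and jointly cover $I$, so since any two down-sets in a totally ordered set are comparable one of them must equal $I$. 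Zorn then produces a prime $\sfP$ minimal in $Z$; since $Z$ is closed under passage to smaller primes (any prime $\sfQ \cie \sfP$ still satisfies $\sfQ \cap S = \varnothing$), this $\sfP$ is in fact minimal in $\Spc\K$, hence a closed point of $\Spc\K$ lying in $Z$.

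For (b), I may assume the cover $U(k) \cie \Un_i U(k_i)$ is by basic opens. My plan is to show $k \in \sfJ := \thick^\otimes(\{k_i\}_{i \in I})$; since the thick-tensor-closure construction is finitary, this gives $k \in \thick^\otimes(\{k_i\}_{i \in F})$ for some finite $F \cie I$, so every prime containing $\{k_i\}_{i \in F}$ contains $k$, which dualises to the desired finite subcover $U(k) \cie \Un_{i \in F} U(k_i)$. To prove $k \in \sfJ$ I argue by contradiction, following the Zorn argument of Proposition~2.3(a): supposing $k \notin \sfJ$, the set $S_k = \{k^{\otimes n} : n \geq 0\}$ is $\otimes$-multiplicatively closed, contains $\mathbf{1}$, is disjoint from $\sfJ$ (by radicality of tensor-ideals, Remark~\ref{rem_rigid_facts}), and omits $0$ (else $k \cong 0 \in \sfJ$ by the same remark). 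Then I take a thick tensor-ideal $\sfP \supseteq \sfJ$ maximal subject to $\sfP \cap S_k = \varnothing$; if $a, b \notin \sfP$ but $a \otimes b \in \sfP$, maximality forces each of $\thick^\otimes(\sfP \cup \{a\})$ and $\thick^\otimes(\sfP \cup \{b\})$ to meet $S_k$, say at $k^{\otimes m}$ and $k^{\otimes n}$, and the standard tensor-product manipulation (using $a \otimes b \in \sfP$ and that $\sfP$ is a tensor-ideal) places $k^{\otimes(m+n)} = k^{\otimes m} \otimes k^{\otimes n}$ inside $\sfP$, contradicting $\sfP \cap S_k = \varnothing$. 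Hence $\sfP$ is prime, lies in $U(k)$ (as $k \notin \sfP$) but in no $U(k_i)$ (as $k_i \in \sfJ \cie \sfP$), contradicting the covering hypothesis.

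For (c), assertion (b) now says every basic open $U(k)$ is quasi-compact, so given any quasi-compact open $U$ I write $U = \Un_j U(k_j)$ as a union of basic opens, extract a finite subcover $U = U(k_{j_1}) \cup \cdots \cup U(k_{j_n})$, and collapse via the identity $U(k_1) \cup \cdots \cup U(k_n) = U(k_1 \otimes \cdots \otimes k_n)$, an immediate consequence of Lemma~\ref{lem_supp_props}(e). I expect the main obstacle to be the primality step in (b): the tensor-product manipulation that forces a power of $k$ from the two separate extensions of $\sfP$ simultaneously into $\sfP$ is the delicate point, and rigidity (via the radicality and non-tensor-nilpotence of Remark~\ref{rem_rigid_facts}) is crucial both for setting up $S_k$ non-degenerately and for making the contradiction actually materialise.
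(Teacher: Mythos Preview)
Your arguments for (a) and (c) are correct. The paper itself gives no proof here, merely citing Balmer's original article, so there is nothing to compare against directly for those parts. Part (b), however, contains a genuine error.

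You have inverted the membership condition for $U(k)$. By definition $U(k) = \{\sfP : k \in \sfP\}$, so the prime $\sfP$ you construct at the end---with $k \notin \sfP$ and every $k_i \in \sfJ \subseteq \sfP$---lies in $\supp(k) \cap \bigcap_i U(k_i)$, which does not contradict the covering hypothesis $U(k) \subseteq \bigcup_i U(k_i)$ at all. The same reversal infects the strategy: proving $k \in \thick^\otimes(\{k_i\}_{i\in F})$ only says that every prime containing all the $k_{i}$ contains $k$, i.e.\ $\bigcap_{i\in F} U(k_i) \subseteq U(k)$, which is not the finite subcover $U(k) \subseteq \bigcup_{i\in F} U(k_i)$ and does not ``dualise'' to it.

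The covering $U(k) \subseteq \bigcup_i U(k_i)$ rewrites as $\bigcap_i \supp(k_i) \subseteq \supp(k)$, and the correct target is the \emph{opposite} ideal containment: some finite tensor product $k_{i_1}\otimes\cdots\otimes k_{i_n}$ should lie in $\thick^\otimes(k)$. One clean way to see this is to pass to $\K' = \K/\thick^\otimes(k)$, so that $\Spc\K'$ identifies with $U(k)$ and the hypothesis says every prime of $\K'$ contains some $\bar k_i$. Take $S$ to be the $\otimes$-multiplicative set of all finite tensor products of the $\bar k_i$ (including $\mathbf{1}$); if $0\notin S$ then the prime-existence result you already invoked yields a prime of $\K'$ disjoint from $S$, hence containing no $\bar k_i$, a contradiction. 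Thus some $\bar k_{i_1}\otimes\cdots\otimes\bar k_{i_n}\cong 0$, i.e.\ $k_{i_1}\otimes\cdots\otimes k_{i_n}\in\thick^\otimes(k)$, whence $U(k)\subseteq U(k_{i_1}\otimes\cdots\otimes k_{i_n}) = \bigcup_j U(k_{i_j})$.
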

\begin{proof}
Both results can be found in \cite{BaSpec}: the first is Corollary~2.12, and the second two are the content of Proposition~2.14.
\end{proof}

In particular, it follows from the above proposition and Lemma~\ref{lem_supp_props} that the $U(k)$ for $k\in \K$ give a basis of quasi-compact open subsets for $\Spc\K$ that is closed under finite intersections as is required in the definition of a spectral space.

\begin{rem}\label{rem_noeth}
Recall, or prove as an exercise, that a space is noetherian (i.e.\ satisfies the descending chain condition for closed subsets) if and only if every open subset is quasi-compact. Combining this with (c) above we see that if $\Spc\K$ is noetherian then every open subset is of the form $U(k)$ for some $k\in \K$ and hence every closed subset is the support of some object.
\end{rem}


\subsection{The classification theorem}

We now come to the first main result of these notes, namely the abstract classification of thick tensor-ideals of $\K$ in terms of $\Spc\K$. The starting point to this story is an abstract axiomatisation of the properties of the support, based on Lemma~\ref{lem_supp_props}.

\begin{defn}
A \emph{support data} on $(\K,\otimes,\mathbf{1})$ is a pair $(X,\sigma)$ where $X$ is a topological space and $\sigma$ is an assignment associating to each object $k$ of $\K$ a closed subset $\sigma(k)$ of $X$ such that:
\begin{itemize}
\item[$(a)$] $\sigma(\mathbf{1}) = X$ and $\sigma(0) = \varnothing$;
\item[$(b)$] $\sigma(k\oplus l) = \sigma(k) \cup \sigma(l)$;
\item[$(c)$] $\sigma(\S k) = \sigma(k)$;
\item[$(d)$] for any distinguished triangle
\begin{displaymath}
k \to l \to m \to \S k
\end{displaymath}
in $\K$ there is a containment 
\begin{displaymath}
\sigma(l) \cie (\sigma(k) \cup \sigma(m));
\end{displaymath}
\item[$(e)$] $\sigma(k\otimes l) = \sigma(k) \cap \sigma(l)$.
\end{itemize}
A \emph{morphism of support data} $f\colon (X,\sigma) \to (Y,\tau)$ on $\K$ is a continuous map $f\colon X\to Y$ such that for every $k\in \K$ the equality
\begin{displaymath}
\sigma(k) = f^{-1}(\tau(k))
\end{displaymath}
is satisfied. An isomorphism of support data is a morphism $f$ of support data where $f$ is a homeomorphism.
\end{defn}

It should, given the definition of the support, come as no surprise that the pair $(\Spc\K, \supp)$ is a universal support data for $\K$.

\begin{thm}[\cite{BaSpec}*{Theorem~3.2}]
Let $\K$ be as throughout. The pair $(\Spc\K, \supp)$ is the terminal support data on $\K$. Explicitly, given any support data $(X,\sigma)$ on $\K$ there is a unique morphism of support data $f\colon (X,\sigma) \to (\Spc\K, \supp)$, i.e.\ a unique continuous map $f$ such that
\begin{displaymath}
\sigma(k) = f^{-1}\supp(k)
\end{displaymath}
for all $k\in \K$. This unique morphism is given by sending $x\in X$ to
\begin{displaymath}
f(x) = \{k\in \K\; \vert \; x\notin \sigma(k)\}.
\end{displaymath}
\end{thm}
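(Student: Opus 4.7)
The proof breaks into four tasks: showing that $f(x) = \{k\in\K : x\notin\sigma(k)\}$ really is a prime ideal (so $f$ lands in $\Spc\K$); verifying the identity $\sigma(k) = f^{-1}\supp(k)$; deducing continuity of $f$; and proving uniqueness. The plan is to do the first task by translating each defining property of a support data on $\K$ into a corresponding ideal-theoretic property of $f(x)$, and then to observe that the remaining three tasks are essentially formal consequences of the definition of $f$ together with the fact that $\{\supp k\}$ is a basis of closed sets for $\Spc\K$.

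For well-definedness, I would fix $x\in X$ and check each closure condition for $f(x)$ using the corresponding axiom from the definition of support data. Property (a) gives $\mathbf{1}\notin f(x)$ (since $x\in X = \sigma(\mathbf{1})$) and $0\in f(x)$, so $f(x)$ is a proper subcategory containing $0$; property (c) immediately gives closure under suspension; property (d) applied to a triangle $k\to l\to m\to \S k$ with $k,m\in f(x)$ shows $x\notin\sigma(l)$, giving closure under cones; property (b) handles direct summands, since $x\notin\sigma(k)\cup\sigma(l) = \sigma(k\oplus l)$ forces $x\notin\sigma(k)$ and $x\notin\sigma(l)$; property (e) gives both the tensor-ideal property (if $l\in f(x)$ then $x\notin\sigma(l)\supseteq\sigma(k\otimes l)$) and primality (if $x\notin\sigma(k)\cap\sigma(l)$ then $x$ misses one of $\sigma(k)$, $\sigma(l)$). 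Thus $f(x)\in\Spc\K$.

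The preimage identity is then an unpacking of definitions:
\begin{displaymath}
f^{-1}\supp(k) = \{x\in X : k\notin f(x)\} = \{x\in X : x\in \sigma(k)\} = \sigma(k).
\end{displaymath}
Since each $\sigma(k)$ is closed in $X$ by hypothesis and the $\supp(k)$ form a basis of closed subsets for the Zariski topology on $\Spc\K$ by Lemma~\ref{lem_supp_props} and the definition that follows, this preimage computation immediately gives continuity of $f$, and also shows that $f$ is a morphism of support data.

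For uniqueness, suppose $g\colon(X,\sigma)\to(\Spc\K,\supp)$ is any morphism of support data. Then for every $x\in X$ and every $k\in\K$,
\begin{displaymath}
k\in g(x) \iff g(x)\notin\supp(k) \iff x\notin g^{-1}\supp(k) = \sigma(k) \iff k\in f(x),
\end{displaymath}
so $g(x) = f(x)$ as subsets of $\K$, and hence $g = f$. The only step requiring genuine checking is the prime-ideal verification in the first paragraph, and I expect that to be straightforward once the axioms of a support data are matched line-by-line with the closure properties of a prime tensor-ideal; the rest of the argument is pure bookkeeping.
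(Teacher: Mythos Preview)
Your proposal is correct and follows essentially the same approach as the paper's sketch: both verify that $f(x)$ is a prime tensor-ideal by matching the support-data axioms (a)--(e) with the corresponding closure properties, then derive the preimage identity and continuity directly from the definitions. You actually go slightly further than the paper, which leaves uniqueness to the reader (referring to \cite{BaSpec}*{Lemma~3.3}), whereas you spell out the short chain of equivalences establishing $g=f$.
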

\begin{proof}[Sketch of proof]
We give a brief sketch of the argument. We have seen in Lemma~\ref{lem_supp_props} that the pair $(\Spc\K, \supp)$ is in fact a support data for $\K$. By the same lemma it is clear that for $x\in X$ the full subcategory $f(x)$, defined as in the statement, is a proper thick tensor-ideal: it is proper by (a), closed under summands by (b), closed under $\S$ by (c), closed under extensions by (d), and a tensor-ideal by (e). To see that it is prime suppose $k\otimes l$ lies in $f(x)$. By axiom (e) for a support data we have
\begin{displaymath}
x\notin \sigma(k\otimes l) = \sigma(k)\cap \sigma(l).
\end{displaymath}
and hence $x$ must fail to lie in at least one of $\sigma(k), \sigma(l)$ implying that one of $k$ or $l$ lies in $f(x)$.

The map $f$ is a map of support data as $f(x)\in \supp k$ if and only if $k\notin f(x)$ if and only if $x\in \sigma(k)$, i.e.\
\begin{displaymath}
f^{-1}\supp(k) = \{x\in X\; \vert \; f(x)\in \supp k\} = \{x\in X\; \vert \; x\in \sigma(k)\} = \sigma(k).
\end{displaymath}
This also proves continuity of $f$ by definition of the Zariski topology on $\Spc\K$. We leave the unicity of $f$ to the reader (or it can be found as \cite{BaSpec}*{Lemma~3.3}).
\end{proof}

Before stating the promised classification theorem we need one more definition (we also provide two bonus definitions which will be useful both here and later).

\begin{defn}\label{defn_thomason}
Let $X$ be a spectral space and let $W \cie X$ be a subset of $X$. We say $W$ is \emph{specialisation closed} if for any $w\in W$ and $w' \in \overline{\{w\}}$ we have $w'\in W$. That is, $W$ is specialisation closed if it is the union of the closures of its elements. Given $w,w'$ as above we call $w'$ a \emph{specialisation} of $w$. Dually, we say $W$ is \emph{generisation closed} if given any $w' \in W$ and a $w\in X$ such that $w'\in \overline{\{w\}}$ then we have $w\in W$. In this situation we call $w$ a \emph{generisation} of $w'$.

A \emph{Thomason subset} of $X$ is a subset of the form
\begin{displaymath}
\bigcup_{\lambda \in \Lambda} V_\lambda
\end{displaymath}
where each $V_\lambda$ is a closed subset of $X$ with quasi-compact complement. Note that any Thomason subset is specialisation closed. We denote by $\Thom(X)$ the collection of Thomason subsets of $X$. It is a poset with respect to inclusion and in fact also carries the structure of a complete lattice where the join is given by taking unions.
\end{defn}

\begin{rem}\label{rem_noeth_spec}
As observed in Remark~\ref{rem_noeth} if $X$ is noetherian then every open subset is quasi-compact. Thus the Thomason subsets of $X$ are precisely the specialisation closed subsets.
\end{rem}

\begin{rem}
By Proposition~\ref{prop_spc_props} for any $k\in \K$ the subset $\supp(k)$ is a Thomason subset of $\Spc\K$, as is any union of supports of objects. Part (c) of the same proposition implies the converse, namely that any Thomason subset $V$ can be written as a union of supports of objects of $k$. This observation explains the role of Thomason subsets in the following theorem.
\end{rem}


\begin{thm}[\cite{BaSpec}*{Theorem~4.10}]\label{thm_spc_class}
The assignments
\begin{displaymath}
\sigma\colon \Thick^\otimes(\K) \to \Thom(\Spc\K), \;\; \sigma(\sfI) = \bigcup_{k\in \sfI} \supp(k)
\end{displaymath}
and
\begin{displaymath}
\tau\colon \Thom(\Spc\K) \to \Thick^\otimes(\K), \;\; \tau(V) = \{k\in \K\; \vert \; \supp(k)\cie V\},
\end{displaymath}
for $\sfI \in \Thick^\otimes(\K)$ and $V\in \Thom(\K)$ give an isomorphism of lattices
\begin{displaymath}
\Thick^\otimes(\K) \cong \Thom(\K).
\end{displaymath}
\end{thm}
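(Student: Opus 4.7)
The plan is to verify that $\sigma$ and $\tau$ are well-defined, inclusion-preserving, and mutually inverse. For well-definedness, $\sigma(\sfI)$ is a union of sets of the form $\supp(k)$, each of which is closed with quasi-compact complement by Proposition~\ref{prop_spc_props}(b), so $\sigma(\sfI)\in\Thom(\Spc\K)$. Dually, $\tau(V)$ is a thick tensor-ideal: Lemma~\ref{lem_supp_props} translates the conditions on supports into closure of $\tau(V)$ under shifts (c), cones (d), summands (b), and tensoring with $\K$ (e). Both maps are evidently monotone, and the containments $\sfI\cie\tau\sigma(\sfI)$ and $\sigma\tau(V)\cie V$ are tautological.

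For the reverse inclusion $V\cie\sigma\tau(V)$, I would write $V=\bigcup_\lambda V_\lambda$ with each $V_\lambda$ closed of quasi-compact complement. By Proposition~\ref{prop_spc_props}(c) each such $V_\lambda$ equals $\supp(k_\lambda)$ for some $k_\lambda\in\K$. Since $\supp(k_\lambda)\cie V$, we have $k_\lambda\in\tau(V)$, and hence $V_\lambda=\supp(k_\lambda)\cie\sigma\tau(V)$; taking the union over $\lambda$ gives $V\cie\sigma\tau(V)$.

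The core of the proof is the remaining inclusion $\tau\sigma(\sfI)\cie\sfI$. Given $k$ with $\supp(k)\cie\bigcup_{l\in\sfI}\supp(l)$, the first step is a reduction to a finite subfamily. In the Hochster (constructible) refinement of the Zariski topology on the spectral space $\Spc\K$, the space is quasi-compact and each $\supp(l)$, along with each quasi-compact open $U(k)$, is clopen. The assumption $\supp(k)\cie\bigcup_{l}\supp(l)$ rewrites as $\Spc\K=U(k)\cup\bigcup_{l\in\sfI}\supp(l)$, which is an open cover in the constructible topology; compactness extracts a finite subcover, giving
\begin{displaymath}
\supp(k)\cie\supp(l_1)\cup\cdots\cup\supp(l_n)=\supp(l_1\oplus\cdots\oplus l_n).
\end{displaymath}
This reduces the claim to a key lemma: $\supp(k)\cie\supp(l)$ implies $k\in\thick^\otimes(l)$.

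This key lemma is the main obstacle. My proposed strategy is to construct a ``Koszul-like'' object $C$ built from iterated cones of the evaluation $\epsilon\colon l^\vee\otimes l\to\mathbf{1}$, engineered so that $\supp(C)=U(l)$; granted such $C$, the support of $k\otimes C$ equals $\supp(k)\cap U(l)=\varnothing$, so by the zero-detection lemma (which uses rigidity via Remark~\ref{rem_rigid_facts}) we obtain $k\otimes C\cong 0$. Then tensoring the defining triangle of $C$ with $k$ realises $k$ as a summand of an object lying in $\thick^\otimes(l^\vee\otimes l)\cie\thick^\otimes(l)$, completing the proof. The delicate point is pinning down the support of $C$ exactly: the naive single cone $\cone(\epsilon)$ only satisfies $\supp(\cone(\epsilon))\supseteq U(l)$, so I anticipate needing either an iteration of the cone construction, or an appeal to a direct description of $\Spc(\K/\thick^\otimes(l))$ as $U(l)$ via the universal property of the spectrum, to force equality of supports.
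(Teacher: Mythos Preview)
Your treatment of well-definedness, monotonicity, and the equality $\sigma\tau(V)=V$ is correct and matches the paper's sketch.

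For $\tau\sigma(\sfI)\subseteq\sfI$, however, your primary strategy has a genuine obstruction. You want an object $C\in\K$ with $\supp(C)=U(l)$, but supports of objects are \emph{closed} by definition (each $\supp(k)$ is the complement of the open set $U(k)$), whereas $U(l)$ is open. So $\supp(C)=U(l)$ would force $U(l)$ to be clopen, which fails in general (think of $l=k(\mathfrak{p})$ in $\D^{\mathrm{perf}}(R)$ for a non-closed point $\mathfrak{p}$). Iteration cannot repair this: any $C$ sitting in a triangle $D\to\mathbf{1}\to C$ with $D\in\thick^\otimes(l)$ satisfies $\Spc\K=\supp(\mathbf{1})\subseteq\supp(D)\cup\supp(C)\subseteq\supp(l)\cup\supp(C)$, hence $U(l)\subseteq\supp(C)$; combined with the requirement $\supp(C)\cap\supp(l)=\varnothing$ (needed to kill $k\otimes C$ for \emph{every} $k$ with $\supp(k)\subseteq\supp(l)$) you are forced back to $\supp(C)=U(l)$ exactly. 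The idempotent-like object you are reaching for really only exists in a compactly generated enlargement (this is the $L_{\mathcal{V}}\mathbf{1}$ of Section~2), not in the essentially small $\K$.

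Your fallback --- identifying $\Spc(\K/\thick^\otimes(l))$ with $U(l)$ and arguing that the image of $k$ there has empty support hence vanishes --- does work, and once unwound it is precisely the paper's route. Balmer shows that every radical thick tensor-ideal (hence, by Remark~\ref{rem_rigid_facts}, every thick tensor-ideal here) equals the intersection of the primes containing it,
\[
\sfI \;=\; \bigcap_{\sfP\supseteq\sfI}\sfP,
\]
via a Zorn/multiplicative-set argument parallel to the commutative-algebra proof. One then observes formally that $\tau\sigma(\sfI)=\{k:\supp(k)\subseteq\sigma(\sfI)\}=\bigcap_{\sfP\supseteq\sfI}\sfP$, since $\supp(k)\subseteq\sigma(\sfI)$ says exactly that every prime containing $\sfI$ contains $k$. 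Your constructible-topology compactness reduction to a single $l$ is correct and pleasant, but with this approach it becomes unnecessary.
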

\begin{proof}[Sketch of proof]
By the previous remark and the properties of the support given in Lemma~\ref{lem_supp_props} both assignments are well defined, i.e.\ $\sigma(\sfI)$ is a Thomason subset and $\tau(V)$ is a thick tensor-ideal. It is clear that both morphisms are inclusion preserving, so we just need to check that they are inverse to one another.

Consider the thick tensor-ideal $\tau\sigma(\sfI)$. It is clear that $\sfI\cie \tau\sigma(\sfI)$. The equality is proved by showing that
\begin{displaymath}
\sfI = \bigcap_{\substack{\sfP\in \Spc\K \\ \sfI\cie \sfP}} \sfP = \tau\sigma(\sfI).
\end{displaymath}
We refer to \cite{BaSpec} for the details.

On the other hand consider the Thomason subset $\sigma\tau(V)$. In this case it is clear that $\sigma\tau(V)\cie V$. By the remark preceding the theorem we know $V$ can be written as a union of supports of objects and so this containment is in fact an equality.

\end{proof}

\begin{aside}
By Stone duality the lattice $\Thom(\Spc\K)$ actually determines $\Spc\K$. So by the theorem if we know $\Thick^\otimes(\K)$ we can recover $\Spc\K$. In fact this gives another way of describing the universality of $\Spc\K$.
\end{aside}

To conclude we briefly explain what the spectrum is in each of our current running examples and then give a slightly more detailed treatment of a particular case.

\begin{ex}\label{ex_ringspec}
Let $R$ be a commutative ring. Then
\begin{displaymath}
\Spc\D^\mathrm{perf}(R) \cong \Spec R.
\end{displaymath}
This computation is due to Neeman \cite{NeeChro} (and Hopkins) in the case that $R$ is noetherian and Thomason \cite{Thomclass} in general. Applying the above theorem, in combination with Lemma~\ref{lem_small_unit_gen}, tells us that thick subcategories of $\D^\mathrm{perf}(R)$ are in bijection with Thomason subsets of $\Spec R$.
\end{ex}

\begin{ex}
As previously let $\mathrm{SH}^\mathrm{fin}$ denote the finite stable homotopy category. The description of the thick subcategories of $\mathrm{SH}^\mathrm{fin}$ by Devinatz, Hopkins, and Smith \cite{DevinatzHopkinsSmith} allows one to compute $\Spc\mathrm{SH}^\mathrm{fin}$. A nice picture of this space can be found in \cite{BaSSS}*{Corollary~9.5}.
\end{ex}

\begin{ex}
Let $G$ be a finite group and $k$ a field whose characteristic divides the order of $G$. By a result of Benson, Carlson, and Rickard \cite{BCR} we have
\begin{displaymath}
\Spc\smodu kG \cong \Proj H^\bullet(G;k)
\end{displaymath}
i.e.\ the spectrum of the stable category is the space underlying the projective scheme associated to the group cohomology ring.
\end{ex}

\subsection{An explicit example}
We now describe, in some detail but mostly without proofs, the explicit example of $\D^\mathrm{b}(\ZZ)$, the bounded derived category of finitely generated abelian groups. Our aim is to highlight some of the phenomena involved in studying the spectrum rather than to give a proof of the classification in this case.

As $\ZZ$ is a hereditary ring, i.e.\ has global dimension $1$, every complex $E\in \D^\mathrm{b}(\ZZ)$ is formal. Explicitly, this means that for every $E$ there is an isomorphism
\begin{displaymath}
E\cong \bigoplus_{i\in \ZZ} \Sigma^{-i}H^i(E).
\end{displaymath}
Thus to understand tensor-ideals in $\D^\mathrm{b}(\ZZ)$ it is sufficient to understand what happens to finitely generated abelian groups. 

Recall from Lemma~\ref{lem_small_unit_gen} that since $\ZZ$ generates $\D^\mathrm{b}(\ZZ)$ every thick subcategory is a tensor-ideal. Our first observation is that if $p\neq q$ are distinct primes in $\ZZ$ then
\begin{displaymath}
\ZZ/p\ZZ \otimes^\mathbf{L}_\ZZ \ZZ/q\ZZ \cong 0.
\end{displaymath}
If $\sfP$ is a prime tensor-ideal then it certainly contains $0$. This observation then shows that $\sfP$ must contain $\ZZ/p\ZZ$ for all but possibly one prime $p\in \ZZ$. In fact, this missing prime (or lack thereof) completely determines the prime tensor-ideal: the homeomorphism
\begin{displaymath}
\phi\colon \Spec \ZZ \to \Spc \D^\mathrm{b}(\ZZ)
\end{displaymath}
can be described explicitly as
\begin{displaymath}
\phi((p)) = \thick(\ZZ/q\ZZ\;\vert\; q\neq p).
\end{displaymath}
In particular, $\phi$ sends the $0$ ideal to the thick subcategory consisting of all complexes with torsion cohomology, which is the maximal proper thick subcategory of $\D^\mathrm{b}(\ZZ)$. As this maximal proper thick subcategory certainly contains $\phi((p))$ for all $p\neq 0$ it does indeed give the generic point of $\Spc \D^\mathrm{b}(\ZZ)$ under the Zariski topology.

On the other hand $\phi^{-1}$ sends a prime tensor-ideal $\sfP$ to
\begin{displaymath}
\phi^{-1}(\sfP) = \{n\in \ZZ \; \vert \; \cone(\ZZ \stackrel{n}\to\ZZ)\notin \sfP\},
\end{displaymath}
which one can check is really a prime ideal.

This gives a concrete illustration of the reversal of inclusions (noted in Remark~\ref{rem_gymnastics}) upon passing from prime ideals in $\ZZ$ to prime tensor-ideals in $\D^\mathrm{b}(\ZZ)$. To localise at a prime ideal $(p)\in \ZZ$ we simply invert those elements in $\ZZ\setminus (p)$. On the other hand, in $\D^\mathrm{b}(\ZZ)$, we invert these elements by killing their cones, i.e.\ taking the quotient by $\phi((p))$. The analogue is true for general commutative rings\textemdash{}localising at a smaller prime ideal inverts more elements and thus the corresponding thick subcategory in the perfect complexes is larger.

One can also use this example to gain further intuition for the support and to somewhat explain the motivation behind defining it in terms of failure to lie in prime tensor-ideals. There is an equivalence, up to idempotent completion, for each prime $p\in \ZZ$
\begin{displaymath}
\D^\mathrm{b}(\ZZ)/\phi((p)) {\to} \D^\mathrm{b}(\ZZ_{(p)})
\end{displaymath}
and one can identify, under this functor, the projection
\begin{displaymath}
\D^\mathrm{b}(\ZZ) \stackrel\pi\to \D^\mathrm{b}(\ZZ)/\phi((p))
\end{displaymath}
with localisation at $(p)$. Thus, for $E\in \D^\mathrm{b}(\ZZ)$, we have $\phi((p)) \in \supp E$ if and only if $\pi(E)\neq 0$ if and only if $E_{(p)} \neq 0$. This identifies the abstract support with the usual homological support of complexes.

\section{Generalised Rickard idempotents and supports}

In this section we use the Balmer spectrum to construct a theory of supports for triangulated categories admitting arbitrary set-indexed coproducts (so being interesting and essentially small become mutually exclusive). Our main reference is \cite{BaRickard} and again, we follow it relatively closely.

\subsection{Rigidly-compactly generated tensor triangulated categories}

We start by introducing the main players of this section, rigidly-compactly generated tensor triangulated categories. In order to define such a creature it makes sense to start with the notion of a compactly generated triangulated category.

\begin{defn}
Let $\sfT$ be a triangulated category admitting all set-indexed coproducts. We say an object $t\in \sfT$ is \emph{compact} if $\sfT(t,-)$ preserves arbitrary coproducts, i.e.\ if for any family $\{X_\lambda \; \vert \; \lambda \in \Lambda\}$ of objects in $\sfT$ the natural morphism
\begin{displaymath}
\bigoplus_{\lambda \in \Lambda} \sfT(t, X_\lambda) \to \sfT(t, \coprod_{\lambda \in \Lambda} X_\lambda)
\end{displaymath}
is an isomorphism.

We say $\sfT$ is \emph{compactly generated} if there is a set $G$ of compact objects of $\sfT$ such that an object $X\in \sfT$ is zero if and only if
\begin{displaymath}
\sfT(g, \S^i X) = 0 \;\; \text{for every} \; g\in G \; \text{and} \; i\in \ZZ.
\end{displaymath}
We denote by $\sfT^c$ the full subcategory of compact objects of $\sfT$ and note that it is an essentially small thick subcategory of $\sfT$.
\end{defn}

\begin{ex}\label{ex_cg}
Let us give some examples of compactly generated triangulated categories which will be used as illustrations throughout.
\begin{itemize}
\item[(1)] Let $R$ be a ring and denote by $\D(R)$ the unbounded derived category of $R$. Then $\D(R)$ is compactly generated and $R$ is a compact generator for $\D(R)$.
\item[(2)] Let $\mathrm{SH}$ denote the stable homotopy category. The sphere spectrum $S^0$ is a compact generator for $\mathrm{SH}$ and hence it is compactly generated.
\item[(3)] Let $G$ be a finite group and let $k$ be a field whose characteristic divides the order of $G$. Then $\sModu kG$, the stable category of aribtrary $kG$-modules is a compactly generated triangulated category.
\end{itemize}
\end{ex}

\begin{defn}
A \emph{compactly generated tensor triangulated category} is a triple $(\sfT, \otimes, \mathbf{1})$ where $\sfT$ is a compactly generated triangulated category, and $(\otimes, \mathbf{1})$ is a symmetric monoidal structure on $\sfT$ such that the tensor product $\otimes$ is a coproduct preserving exact functor in each variable and the compact objects $\sfT^c$ form a tensor subcategory. In particular, we require that the unit $\mathbf{1}$ is compact.
\end{defn}

\begin{rem}
We will frequently supress the tensor product and unit and just refer to $\sfT$ as a compactly generated tensor triangulated category.
\end{rem}

\begin{rem}
By Brown representability \cite{NeeGrot}*{Theorem~3.1}, and the assumption that the tensor product is coproduct preserving, it is automatic that $\sfT$ is closed symmetric monoidal\textemdash{}for each $X\in \sfT$ the functor $X\otimes -$ has a right adjoint which we denote by $\hom(X,-)$.
\end{rem}

Finally, we come to the combination of hypotheses that we can get the most mileage out of.

\begin{defn}
A \emph{rigidly-compactly generated tensor triangulated category} $\sfT$ is a compactly generated tensor triangulated category such that the full subcategory $\sfT^c$ of compact objects is rigid. Explicitly, not only is $\sfT^c$ a tensor subcategory of $\sfT$, but it is closed under the internal hom (which exists by the previous remark) and is rigid in the sense of Definition~\ref{defn_rigid}.
\end{defn}

\begin{ex}
Each of the triangulated categories covered in Example~\ref{ex_cg} (with the proviso in (1) that the ring is commutative) is a rigidly-compactly generated tensor triangulated category via the left derived tensor product, smash product, and tensor product over the ground field with the diagonal action respectively.
\end{ex}

\subsection{Localising sequences and smashing localisations}
We now review some definitions concerning the particular subcategories of a rigidly-compactly generated tensor triangulated category that we will wish to consider. Of course some of the definitions we make are valid more generally, but we will restrict ourselves to what we need. Throughout we fix a rigidly-compactly generated tensor triangulated category $\sfT$.

\begin{defn}
Let $\sfL$ be a triangulated subcategory of $\sfT$. The subcategory $\sfL$ is \emph{localising} if it is closed under arbitrary coproducts in $\sfT$. Dually, it is \emph{colocalising} if it is closed under arbitrary products in $\sfT$. Put another way, a full subcategory of $\sfT$ is localising if it is closed under suspensions, cones, and coproducts and colocalising if it is closed under suspensions, cones, and products.

Let $\sfI$ be a localising subcategory of $\sfT$. We call $\sfI$ a \emph{localising tensor-ideal} if for any $X\in \sfT$ and $Y\in \sfI$ the object $X\otimes Y$ lies in $\sfI$.

Given a family of objects $S\cie \sfT$ we denote by $\loc(S)$ (resp.\ $\loc^\otimes(S)$) the smallest localising subcategory (resp.\ localising tensor-ideal) of $\sfT$ containing $S$ and by $\Loc(\sfT)$ (resp.\ $\Loc^\otimes(\sfT)$) the collection of all localising subcategories (resp.\ localising tensor-ideals) of $\sfT$.
\end{defn}

\begin{rem}
There is, \emph{a priori}, no reason that either $\Loc(\sfT)$ or $\Loc^\otimes(\sfT)$ should be sets. In fact, whether or not these collections can form proper classes in the given situation is a longstanding open problem.
\end{rem}

\begin{rem}
By \cite{NeeCat}*{Proposition~1.6.8} every localising subcategory and every colocalising subcategory of $\sfT$ is automatically closed under direct summands, i.e.\ (co)localising implies thick.
\end{rem}

\begin{rem}\label{rem_gen}
One can show, see for instance \cite{NeeGrot}*{Lemma~3.2}, that a set $G$ of compact objects in $\sfT$ is a set of compact generators for $\sfT$ if and only if $\loc(G) = \sfT$.
\end{rem}

This last remark is very important in practice. For instance, it is a key point in proving the following elementary but useful lemma. Perhaps more importantly, the proof illustrates an argument which appears frequently in this subject.

\begin{lem}\label{lem_big_rigid}
Let $\sfT$ be a rigidly-compactly generated tensor triangulated category. Then, for $t\in \sfT^c$ the isomorphism of endofunctors of $\sfT^c$
\begin{displaymath}
t^\vee\otimes (-) \stackrel{\a}{\to} \hom(t,-)
\end{displaymath}
extends to an isomorphism of the corresponding endofunctors of $\sfT$. That is, for any $X\in \sfT$ we have $t^\vee\otimes X \cong \hom(t,X)$. In particular, $\hom(t,-)$ preserves coproducts and $t\otimes (-)$ is left and right adjoint to $t^\vee\otimes (-)$ when viewed as endofunctors of $\sfT$.
\end{lem}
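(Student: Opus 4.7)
The plan is to follow a standard compact-generation argument: introduce the full subcategory $\mathcal{X} \subseteq \sfT$ consisting of those $X$ for which the natural map $\a_X\colon t^\vee \otimes X \to \hom(t,X)$ is an isomorphism, show that $\mathcal{X}$ is a localising subcategory containing $\sfT^c$, and then conclude $\mathcal{X} = \sfT$ via Remark~\ref{rem_gen}, which says $\loc(\sfT^c) = \sfT$.

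First I would check that $\mathcal{X}$ is triangulated. The functor $t^\vee \otimes (-)$ is exact by hypothesis on $\otimes$, and $\hom(t,-)$ is exact as a right adjoint of an exact functor between triangulated categories. The natural transformation $\a$ is thus a morphism of exact functors, so the five lemma for triangulated categories shows that if $\a$ is an isomorphism on two out of three vertices of a distinguished triangle it is an isomorphism on the third; closure under $\S$ is similar. The inclusion $\sfT^c \subseteq \mathcal{X}$ is precisely the rigidity hypothesis on $\sfT^c$.

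The main obstacle, as usual, is closure of $\mathcal{X}$ under coproducts. The functor $t^\vee \otimes (-)$ preserves coproducts because $\otimes$ does by assumption, so the substance lies in showing that $\hom(t,-)$ preserves coproducts when $t$ is compact. For this, given a family $\{X_\l\}_{\l \in \L}$ in $\sfT$ and any compact test object $s \in \sfT^c$, I would compute
\begin{align*}
\sfT\bigl(s, \coprod_\l \hom(t,X_\l)\bigr)
&\cong \coprod_\l \sfT(s, \hom(t,X_\l))
\cong \coprod_\l \sfT(s\otimes t, X_\l) \\
&\cong \sfT\bigl(s\otimes t, \coprod_\l X_\l\bigr)
\cong \sfT\bigl(s, \hom(t, \coprod_\l X_\l)\bigr),
\end{align*}
using compactness of $s$, the tensor-hom adjunction, and the crucial point that $s \otimes t \in \sfT^c$ because $\sfT^c$ is a tensor subcategory (part of the definition of a compactly generated tensor triangulated category). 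One checks this chain of isomorphisms is induced by the canonical comparison map. Since $\sfT$ has a set of compact generators and morphisms which induce isomorphisms on $\sfT(s,-)$ for all compact $s$ are isomorphisms, the canonical map $\coprod_\l \hom(t,X_\l) \to \hom(t,\coprod_\l X_\l)$ is an isomorphism. Consequently $\a_{\coprod_\l X_\l}$ is an isomorphism whenever each $\a_{X_\l}$ is, so $\mathcal{X}$ is localising and hence equals $\sfT$.

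For the ``in particular'' assertions: once $t^\vee\otimes(-) \cong \hom(t,-)$ as functors on $\sfT$, preservation of coproducts by $\hom(t,-)$ is immediate. That $t^\vee\otimes (-)$ is right adjoint to $t\otimes(-)$ on $\sfT$ is just a restatement of the tensor-hom adjunction under this isomorphism. The fact that it is also a \emph{left} adjoint follows by applying the entire argument with $t$ replaced by $t^\vee$: we have $t^\vee \in \sfT^c$ since $\sfT^c$ is closed under internal hom, and $(t^\vee)^\vee \cong t$ by rigidity of $\sfT^c$, so the result just proved yields $t \otimes (-) \cong \hom(t^\vee, -)$ on all of $\sfT$, exhibiting $t^\vee \otimes (-) \dashv t\otimes(-)$.
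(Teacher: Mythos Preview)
Your proof is correct and follows the same localising-subcategory argument as the paper. In fact you are more thorough on the key point: the paper simply asserts that both $t^\vee\otimes(-)$ and $\hom(t,-)$ preserve coproducts and uses this to conclude closure of the subcategory under coproducts, whereas you supply the standard verification (via compactness of $s\otimes t$ and Yoneda on compact generators) that $\hom(t,-)$ really does preserve coproducts when $t$ is compact.
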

\begin{proof}
As in Definition~\ref{defn_rigid} we can define a natural transformation
\begin{displaymath}
\a\colon t^\vee \otimes (-) \to \hom(t,-)
\end{displaymath}
of functors $\sfT \to \sfT$. Define a full subcategory of $\sfT$ as follows
\begin{displaymath}
\sfS = \{X\in \sfT \; \vert \; \a_X\colon t^\vee\otimes X \to \hom(t,X) \; \text{is an isomorphism}\}.
\end{displaymath}
As both functors involved preserve suspensions and coproducts, and $\a$ is also compatible with suspensions and coproducts, we see that $\sfS$ is closed under coproducts and suspensions. Given a triangle $X\to Y \to Z \to \S X$ with $X,Y \in \sfS$ the naturality of $\a$ guarantees the commutativity of the following diagram
\begin{displaymath}
\xymatrix{
t^\vee\otimes X \ar[r] \ar[d]^-{\a_X} & t^\vee\otimes Y \ar[r] \ar[d]^-{\a_Y} & t^\vee\otimes Z \ar[r] \ar[d]^-{\a_Z} & \Sigma (t^\vee\otimes X) \ar[d]^-{\a_{\Sigma X}}\\
\hom(t,X) \ar[r] & \hom(t,Y) \ar[r] & \hom(t,Z) \ar[r] & \Sigma\hom(t,X)
}
\end{displaymath}
As both functors are exact the rows are triangles and so since $\a_X$ and $\a_Y$ are isomorphisms it follows that $\a_Z$ must also be an isomorphism and hence $Z\in \sfS$. Thus $\sfS$ is a localising subcategory and, by rigidity of $\sfT^c$, we have $\sfT^c \cie \sfS$. It then follows from \cite{NeeGrot}*{Lemma~3.2} that $\sfS = \sfT$ which proves the first assertion of the Lemma; the rest of the statements are immediate.
\end{proof}

Another illustration is given by the following analogue of Lemma~\ref{lem_small_unit_gen}.

\begin{lem}\label{lem_big_unit_gen}
Let $\sfT$ be a rigidly-compactly generated tensor triangulated category. If $\mathbf{1}$ is a compact generator for $\sfT$, i.e.\ if $\sfT = \loc(\mathbf{1})$, then every localising subcategory of $\sfT$ is a localising tensor-ideal.
\end{lem}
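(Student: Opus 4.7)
The plan is to mimic the style of argument used in the proof of Lemma~\ref{lem_big_rigid}: reduce the statement to a test-subcategory calculation and then use the hypothesis that the unit generates $\sfT$ as a localising subcategory. Fix a localising subcategory $\sfL \cie \sfT$ and an object $Y \in \sfL$. I would define
\begin{displaymath}
\sfS_Y = \{X \in \sfT \; \vert \; X \otimes Y \in \sfL\},
\end{displaymath}
and verify that $\sfS_Y$ is a localising subcategory of $\sfT$.

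The verification is routine: since $- \otimes Y$ is exact and preserves arbitrary coproducts (by the hypothesis that the tensor product is coproduct preserving in each variable), the preimage under this functor of the localising subcategory $\sfL$ is closed under suspensions, cones, and coproducts. This step is essentially the same bookkeeping as the triangle chase in Lemma~\ref{lem_big_rigid}, but now using coproduct-closure of $\sfL$ to handle the coproduct axiom.

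Next I would observe that $\mathbf{1} \in \sfS_Y$, simply because $\mathbf{1} \otimes Y \cong Y \in \sfL$ by the unit axiom of the symmetric monoidal structure. Combined with the previous step, this gives $\loc(\mathbf{1}) \cie \sfS_Y$. Invoking the hypothesis $\sfT = \loc(\mathbf{1})$, together with Remark~\ref{rem_gen} if needed, we conclude $\sfS_Y = \sfT$. Thus $X \otimes Y \in \sfL$ for every $X \in \sfT$. Since $Y \in \sfL$ was arbitrary, $\sfL$ is a localising tensor-ideal, as claimed.

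There is no real obstacle here: the whole argument is a direct transcription of the proof of Lemma~\ref{lem_small_unit_gen} to the infinite setting, with ``thick'' replaced by ``localising'' and the essential input being that $- \otimes Y$ preserves coproducts. It is worth noting that rigidity of $\sfT^c$ plays no explicit role in this lemma; the statement is really about any compactly generated tensor triangulated category whose unit is a compact generator.
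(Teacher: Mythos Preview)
Your argument is correct and is exactly the intended solution: the paper records this lemma with the one-word proof ``Exercise'', and the test-subcategory argument you give (parallel to Lemma~\ref{lem_small_unit_gen} and in the style of Lemma~\ref{lem_big_rigid}) is precisely what is expected. Your closing remark that rigidity is not actually used is also accurate.
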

\begin{proof}
Exercise.
\end{proof}

Localising subcategories are not imaginatively named\textemdash{}they are precisely the kernels of localisation functors. We will only give a brief outline of the formalism that we need and do not go into the details of forming Verdier quotients. There are many excellent sources for further information on this topic such as \cite{NeeCat}, \cite{NeeHolim}, and \cite{KrLoc}.

\begin{defn}\label{defn_locsequence}
A \emph{localisation sequence} is a diagram
\begin{displaymath}
\xymatrix{
\sfL \ar[r]<0.5ex>^-{i_*} \ar@{<-}[r]<-0.5ex>_-{i^!} & \sfT \ar[r]<0.5ex>^-{j^*} \ar@{<-}[r]<-0.5ex>_-{j_*} & \sfC
}
\end{displaymath}
where $i^!$ is right adjoint to $i_*$ and $j_*$ is right adjoint to $j^*$, both $i_*$ and $j_*$ are fully faithful and hence embed $\sfL$ and $\sfC$ as a localising and a colocalising subcategory respectively, and we have equalities
\begin{displaymath}
(i_*\sfL)^\perp = j_*\sfC \quad \text{and} \quad {}^\perp(j_*\sfC) = i_*\sfL,
\end{displaymath}
where
\begin{displaymath}
(i_*\sfL)^\perp = \{Y\in \sfT \; \vert \; \sfT(i_*L, Y) = 0 \;\; \text{for all} \; L\in \sfL\}
\end{displaymath}
and
\begin{displaymath}
{}^\perp(j_*\sfC) = \{Y\in \sfT \; \vert \; \sfT(Y,j_*C) = 0 \;\; \text{for all} \; C\in \sfC\}.
\end{displaymath}
We refer to the composite $i_*i^!$ as the \emph{acyclisation} functor and $j_*j^*$ as the \emph{localisation} functor corresponding to this localisation sequence.

We will frequently abuse the notation in such situations and identify $\sfL$ and $\sfC$ with their images under the fully faithful functors $i_*$ and $j_*$.
\end{defn}

The existence of a localisation sequence provides a great deal of information and has many consequences. The following proposition lists a few of the ones we will need. We suggest perusing the references given before the above definition for more details as well as proofs of the statements made below (there are, in addition, relevant references in \cite{BaRickard}*{Theorem~2.6} where these statements also appear).

\begin{prop}\label{prop_loc_seq}
If we have a localisation sequence as in the definition then the following statements hold.
\begin{itemize}
\item[$(a)$] The composites $j^*i_*$ and $i^!j_*$ are zero. Moreover, the kernel of $j^*$ is precisely $\sfL$.
\item[$(b)$] The composite 
\begin{displaymath}
\sfC \stackrel{j_*}{\to} \sfT \to \sfT/\sfL
\end{displaymath}
is an equivalence. In particular, the Verdier quotient $\sfT/\sfL$ is locally small and the canonical projection $\sfT \to \sfT/\sfL$ has a right adjoint.
\item[$(c)$] For every $X\in \sfT$ there is a distinguished triangle
\begin{displaymath}
i_*i^!X \to X \to j_*j^*X \to \S i_*i^!X.
\end{displaymath}
These triangles are functorial and unique in the sense that given any distinguished triangle
\begin{displaymath}
X' \to X \to X'' \to \S X \quad \text{with} \; X'\in \sfL \; \text{and} \; X''\in \sfC
\end{displaymath}
there are unique isomorphisms $X' \cong i_*i^!X$ and $X'' \cong j_*j^*X$.
\item[$(d)$] A localisation sequence is completely determined by either of the pairs of adjoint functors $(i_*,i^!)$ or $(j^*,j_*)$.
\end{itemize}
\end{prop}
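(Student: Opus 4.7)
The plan is to prove the four statements in order, using each one to support the next. The essential inputs are the two orthogonality relations $(i_*\sfL)^\perp = j_*\sfC$ and ${}^\perp(j_*\sfC) = i_*\sfL$ together with the adjunctions $(i_*,i^!)$ and $(j^*,j_*)$; the real work lies in establishing the functorial triangles of $(c)$, after which $(b)$ and $(d)$ follow with relatively little effort.

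For $(a)$, the vanishing $j^*i_* = 0$ is immediate: for $L \in \sfL$ and $C \in \sfC$ one has $\sfC(j^*i_*L, C) \cong \sfT(i_*L, j_*C) = 0$ by orthogonality, so $j^*i_*L = 0$, and $i^!j_* = 0$ follows symmetrically. If $X \in \ker j^*$ then $\sfT(X, j_*C) \cong \sfC(j^*X, C) = 0$ for all $C$, so $X \in {}^\perp(j_*\sfC) = i_*\sfL$, and the converse inclusion is what was just proved.

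For $(c)$, fix $X \in \sfT$, let $\varepsilon\colon i_*i^!X \to X$ be the counit, and complete it to a distinguished triangle $i_*i^!X \to X \to Y \to \Sigma i_*i^!X$. Because $i_*$ is fully faithful one has $i^!i_* \cong \id_\sfL$, so for every $L \in \sfL$ the map $\sfT(i_*L, i_*i^!X) \to \sfT(i_*L, X)$ induced by $\varepsilon$ is the adjunction isomorphism. The long exact sequence obtained by applying $\sfT(i_*L, -)$ to the triangle then forces $\sfT(i_*L, Y) = 0$, placing $Y \in (i_*\sfL)^\perp = j_*\sfC$. This identifies the triangle with the asserted one, and the uniqueness statement follows from the fact that maps from $\sfL$ into $j_*\sfC$ vanish: any two candidate triangles admit a unique comparison morphism which the five-lemma upgrades to an isomorphism.

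With $(c)$ in hand, $(b)$ follows from the universal property of the Verdier quotient. Essential surjectivity of the composite $\sfC \xrightarrow{j_*} \sfT \to \sfT/\sfL$ is immediate because the triangle of $(c)$ shows that $X \to j_*j^*X$ has cone in $\sfL$ and is therefore inverted in $\sfT/\sfL$. Fully faithfulness is a calculus-of-fractions computation: for any roof $j_*C \xleftarrow{s} W \to j_*C'$ with $\cone(s) \in \sfL$, applying $\sfT(-, j_*C')$ to the triangle $\Sigma^{-1}\cone(s) \to W \to j_*C$ shows $\sfT(W, j_*C') \cong \sfT(j_*C, j_*C') \cong \sfC(C, C')$, identifying the quotient Hom-set with the one in $\sfC$ and incidentally yielding local smallness. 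For $(d)$, knowing $(i_*, i^!)$ recovers $\sfL$ as its essential image, then $j_*\sfC = (i_*\sfL)^\perp$ is forced, and the triangle from $(c)$ supplies the left adjoint $j^*$; the case of $(j^*, j_*)$ is dual. The main obstacle is $(c)$: once the functorial triangles exist, everything else is routine homological bookkeeping.
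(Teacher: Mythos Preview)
The paper does not actually give a proof of this proposition: it defers entirely to the literature, citing Neeman, Krause, and \cite{BaRickard}*{Theorem~2.6}. So there is no argument in the paper to compare against, and your proposal supplies what the paper omits.

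Your argument is the standard one and is essentially correct. A couple of places are a bit compressed and worth tightening. In $(c)$, after showing the third vertex $Y$ lies in $j_*\sfC$, you say ``this identifies the triangle with the asserted one'': to make this honest you should either invoke the uniqueness you are about to prove, or simply apply $j^*$ to the triangle and use $j^*i_*=0$ together with $j^*j_*\cong\id_\sfC$ to get $Y\cong j_*j^*X$. For the uniqueness claim itself, the key point is not just that maps from $\sfL$ to $j_*\sfC$ vanish, but that they vanish in \emph{all degrees}; this is what guarantees both existence and uniqueness of the comparison map of triangles before you apply the five-lemma. In $(b)$, your fractions computation is fine but you should remark that the transition maps in the filtered system computing $\Hom_{\sfT/\sfL}(j_*C,j_*C')$ are all isomorphisms, so the colimit really is $\sfT(j_*C,j_*C')$. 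None of these are genuine gaps, just places where a referee would ask for one more line.
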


In nature localising sequences are actually rather easy to come by as a consequence of the following form of Brown representability.

\begin{thm}\label{thm_brown}
Let $\sfT$ be a compactly generated triangulated category, $L\cie \sfT$ a set of objects, and set $\sfL = \loc(L)$. Then the inclusion $i_*\colon \sfL \to \sfT$ admits a right adjoint. In particular, $\sfL$ fits into a localisation sequence
\begin{displaymath}
\xymatrix{
\sfL \ar[r]<0.5ex>^-{i_*} \ar@{<-}[r]<-0.5ex>_-{i^!} & \sfT \ar[r]<0.5ex>^-{j^*} \ar@{<-}[r]<-0.5ex>_-{j_*} & \sfT/\sfL
}
\end{displaymath}
\end{thm}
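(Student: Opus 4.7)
The plan is to produce the right adjoint $i^!$ via Brown representability applied inside $\sfL$, and then deduce the rest of the localisation sequence formally from Proposition~\ref{prop_loc_seq}(d) and the machinery of Verdier quotients.

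First I would verify that $\sfL = \loc(L)$ satisfies a form of Brown representability. The cleanest route is to observe that a localising subcategory of a compactly generated (hence well-generated) triangulated category that is generated by a set of objects is itself well-generated, by replacing $L$ with $L \cup \S L \cup \S^{-1}L$ and passing to a sufficiently large regular cardinal $\alpha$ so that each object of $L$ is $\alpha$-small in $\sfT$. Neeman's Brown representability theorem then applies to $\sfL$: every cohomological functor $\sfL^{\op} \to \Ab$ that sends coproducts to products is representable.

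Next, given $X \in \sfT$, consider the functor
\begin{displaymath}
H_X \colon \sfL^{\op} \to \Ab, \qquad H_X(L') = \sfT(i_* L', X).
\end{displaymath}
This is cohomological because $i_*$ is exact, and it sends coproducts in $\sfL$ to products because $i_*$ preserves coproducts (localising subcategories are closed under coproducts in $\sfT$). By Brown representability for $\sfL$, there is an object $i^! X \in \sfL$ and a natural isomorphism $H_X \cong \sfL(-, i^! X)$. Functoriality in $X$ (which is then automatic by Yoneda) upgrades $i^!$ to a functor right adjoint to $i_*$, establishing the adjunction $(i_*, i^!)$.

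With $(i_*, i^!)$ in hand, standard Verdier quotient theory provides the rest: set $\sfC = \sfT/\sfL$, and for each $X \in \sfT$ complete the counit $i_* i^! X \to X$ to a triangle $i_* i^! X \to X \to C_X \to \S i_* i^! X$; then $C_X$ is right orthogonal to $\sfL$ and the assignment $X \mapsto C_X$ provides a right adjoint to the quotient functor $j^* \colon \sfT \to \sfC$, realised concretely via a fully faithful $j_* \colon \sfC \to \sfT$. The orthogonality identities $(i_*\sfL)^\perp = j_*\sfC$ and ${}^\perp(j_*\sfC) = i_*\sfL$ then fall out directly from the adjunctions and the triangle above. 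Part~(d) of Proposition~\ref{prop_loc_seq} ensures this is the unique localisation sequence with left term $\sfL$.

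The main obstacle is the first step: confirming that $\sfL$ really does admit Brown representability when $L$ is an arbitrary set of objects, not necessarily compact. Compactness of the generators of $\sfT$ does not pass to generators of $\sfL$, so one cannot invoke the naive form of Brown representability; one must genuinely use well-generatedness (or, equivalently, an explicit Bousfield-style small object argument to build $i^! X$ by transfinite induction on a cardinal bounding the "size" of the objects in $L$). Once this technical point is settled, everything else is formal.
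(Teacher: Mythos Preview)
Your proposal is correct and is essentially the argument behind the references the paper cites: the paper's proof is simply ``This follows from the results in Section~7.2 of \cite{KrLoc} combined with \cite{NeeCat}*{Theorem~8.4.4}'', and those references establish precisely that a set-generated localising subcategory of a well-generated category is well-generated and hence satisfies Brown representability, from which the right adjoint and the localisation sequence follow formally. You have unpacked exactly what those citations provide.
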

\begin{proof}
This follows from the results in Section~7.2 of \cite{KrLoc} combined with \cite{NeeCat}*{Theorem~8.4.4}.
\end{proof}

By definition, given any localisation sequence as above, the functors $i_*$ and $j^*$ preserve coproducts by virtue of being left adjoints. The condition that the right adjoints $i^!$ and $j_*$ also preserve coproducts turns out to be very strong and very useful.

\begin{defn}
A localisation sequence as in Definition~\ref{defn_locsequence} is \emph{smashing} if $i^!$ (or equivalently $j_*$) preserves coproducts. In this case we call $\sfL$ a \emph{smashing subcategory} of $\sfT$. If $\sfL$ is, moreover, a tensor-ideal we call it a \emph{smashing tensor-ideal}.
\end{defn}

In a smashing localisation sequence $\sfC$ is also, under the embedding $j_*$, a localising subcategory of $\sfT$. The standard source of smashing localisation sequences is the following well known lemma.

\begin{lem}\label{lem_smash}
Let $\sfT$ be a compactly generated triangulated category and $S\cie \sfT^c$ a set of compact objects. Then $\sfS = \loc(S)$ is a smashing subcategory, i.e.\ the inclusion $i_*\colon \sfS \to \sfT$ admits a coproduct preserving right adjoint $i^!$.
\end{lem}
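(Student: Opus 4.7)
The plan is to combine Theorem~\ref{thm_brown}, which already furnishes the right adjoint $i^!$, with a standard compactness argument to show that $i^!$ preserves arbitrary coproducts.

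First, I would observe that $\sfS = \loc(S)$ is itself compactly generated by $S$ when regarded as a triangulated category in its own right. Two facts need to be unpacked for this. Since $\sfS$ is a localising subcategory of $\sfT$, coproducts inside $\sfS$ are computed in $\sfT$ and then seen to lie in $\sfS$; in particular the inclusion $i_*$ preserves coproducts. Consequently every $s \in S$, which is compact in $\sfT$ by assumption, is also compact when viewed as an object of $\sfS$. Moreover, by Remark~\ref{rem_gen} the equality $\loc(S) = \sfS$ implies that $S$ is a set of compact generators for $\sfS$.

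Next, given a family $\{X_\lambda\}_{\lambda \in \Lambda}$ in $\sfT$, there is a canonical comparison morphism
\begin{displaymath}
\theta\colon \coprod_{\lambda} i^! X_\lambda \longrightarrow i^!\Bigl(\coprod_{\lambda} X_\lambda\Bigr)
\end{displaymath}
in $\sfS$, assembled from the components $i^! X_\lambda \to i^!(\coprod_\mu X_\mu)$ via the universal property of the coproduct. To show $\theta$ is an isomorphism it suffices, by the generation statement above, to prove that $\sfS(\Sigma^i s, \theta)$ is a bijection for every $s \in S$ and every $i \in \ZZ$; equivalently, the cone of $\theta$ is killed by all $\sfS(\Sigma^i s, -)$ and hence is zero. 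For this I would just chase the isomorphisms
\begin{displaymath}
\sfS\Bigl(s, \coprod_\lambda i^! X_\lambda\Bigr) \;\cong\; \coprod_\lambda \sfS(s, i^! X_\lambda) \;\cong\; \coprod_\lambda \sfT(i_* s, X_\lambda) \;\cong\; \sfT\Bigl(i_* s, \coprod_\lambda X_\lambda\Bigr) \;\cong\; \sfS\Bigl(s, i^!\coprod_\lambda X_\lambda\Bigr),
\end{displaymath}
where the first step uses compactness of $s$ in $\sfS$, the second and fourth the adjunction $(i_*, i^!)$, and the third uses compactness of $i_* s = s$ in $\sfT$. One then checks that this composite identification coincides with $\sfS(s,\theta)$ by naturality, and the same argument applied after suspending gives the corresponding statements for each $\Sigma^i s$.

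There is no serious obstacle here; the only subtlety is verifying that $S$ genuinely behaves as a set of compact generators inside $\sfS$, which follows formally from $\sfS$ being localising in $\sfT$ so that coproducts are not disturbed by the inclusion. The tensor-ideal aspect is not claimed in this lemma, so no further argument is required.
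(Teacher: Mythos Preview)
Your argument is correct. It is essentially an explicit unpacking of the results the paper merely cites: the paper invokes Thomason's localisation theorem \cite{NeeGrot}*{Theorem~2.1} together with \cite{NeeGrot}*{Theorem~5.1}, the latter being precisely the statement that a right adjoint preserves coproducts whenever the left adjoint sends compacts to compacts and the source is compactly generated. You have reproved this criterion in the special case at hand, first verifying directly that $S$ is a set of compact generators for $\sfS$ (playing the role of the compactly-generated hypothesis) and then running the standard adjunction-plus-compactness chase (which is the content of \cite{NeeGrot}*{Theorem~5.1}). Your route is more self-contained and avoids the heavier machinery of Thomason localisation, at the cost of being specific to this situation; the paper's citations give the result as an instance of a general principle but hide the elementary mechanism you have made visible.
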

\begin{proof}
We know from Theorem~\ref{thm_brown} that $i_*$ admits a right adjoint $i^!$. It follows from Thomason's localisation theorem \cite{NeeGrot}*{Theorem~2.1} together with \cite{NeeGrot}*{Theorem~5.1} that $i^!$ preserves coproducts.
\end{proof}

\subsection{Generalised Rickard idempotents and supports}
Throughout this section we will assume $\sfT$ is a rigidly-compactly generated tensor triangulated category such that $\Spc\sfT^c$, the spectrum of the compact objects, is a noetherian topological space. Recall from Remark~\ref{rem_noeth_spec} that in this case the Thomason subsets of $\Spc\sfT^c$ are precisely the specialisation closed subsets. 

\begin{aside}
The assumption that $\Spc \sfT^c$ is noetherian is not necessary, but the situation is significantly more complicated without this assumption. If the spectrum is not noetherian then one can not necessarily define a tensor-idempotent, as in Definition~\ref{defn_gen_ptfunctors2}, for every point. However, the main ideas, as introduced in \cite{BaRickard} and discussed below, do work to some extent without the noetherian hypotheses and as shown in \cite{StevensonAbsFlat} one always has enough information to recover the supports of compact objects via the support of Definition~\ref{defn_bigsupport}.
\end{aside}

We now introduce the objects which allow us to extend the support defined in the first section for objects of $\sfT^c$ to arbitrary objects of $\sfT$. The construction was motivated by Rickard's work in modular representation theory \cite{Rickardidempotent} and the work of Hovey, Palmieri, and Strickland \cite{HPS} on the correct axiomatic framework in which to perform such constructions. 

We produce the objects we desire, named \emph{Rickard idempotents}, via the following key construction based on certain smashing localisations. This result is now somewhat standard, but we include some details. Further information and references can be found in \cite{BaRickard}.

\begin{thm}
Let $\sfT$ be a rigidly-compactly generated tensor triangulated category, $S\cie \sfT^c$ a set of compact objects, and set $\sfS = \loc^\otimes(S)$. Consider the corresponding smashing localisation sequence
\begin{displaymath}
\xymatrix{
\sfS \ar[r]<0.5ex>^-{i_*} \ar@{<-}[r]<-0.5ex>_-{i^!} & \sfT \ar[r]<0.5ex>^-{j^*} \ar@{<-}[r]<-0.5ex>_-{j_*} & \sfS^\perp
}
\end{displaymath}
Then:
\begin{itemize}
\item[$(a)$] $\sfS^\perp$ is a localising tensor-ideal;
\item[$(b)$] there are isomorphisms of functors
\begin{displaymath}
i_*i^!\mathbf{1} \otimes (-) \cong i_*i^! \quad \text{and} \quad j_*j^*\mathbf{1}\otimes (-) \cong j_*j^*;
\end{displaymath}
\item[$(c)$] the objects $i_*i^!\mathbf{1}$ and $j_*j^*\mathbf{1}$ satisfy
\begin{displaymath}
i_*i^!\mathbf{1} \otimes i_*i^!\mathbf{1} \cong i_*i^!\mathbf{1}, \;\; j_*j^*\mathbf{1}\otimes j_*j^*\mathbf{1} \cong j_*j^*\mathbf{1}, \;\; \text{and} \;\; i_*i^!\mathbf{1} \otimes j_*j^*\mathbf{1} \cong 0,
\end{displaymath}
i.e.\ they are \emph{tensor idempotent} and tensor to $0$.
\end{itemize}
\end{thm}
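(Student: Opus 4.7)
The plan is as follows. First observe that the setup does give a smashing localisation sequence: writing
\begin{displaymath}
\sfS = \loc^\otimes(S) = \loc(\{t\otimes s \mid t\in \sfT^c,\, s\in S\})
\end{displaymath}
and using that $\sfT^c$ is closed under $\otimes$, we see $\sfS$ is generated as a localising subcategory by a set of compact objects, so Lemma~\ref{lem_smash} supplies the smashing localisation sequence. For part (a), fix $Y\in \sfS^\perp$ and let $\sfU = \{X\in \sfT \mid X\otimes Y\in \sfS^\perp\}$. Since $\otimes$ is exact and coproduct-preserving in each variable, $\sfU$ is a localising subcategory of $\sfT$. For $t\in \sfT^c$ and $L\in \sfS$, Lemma~\ref{lem_big_rigid} applied to $t^\vee$ (together with $(t^\vee)^\vee\cong t$) gives
\begin{displaymath}
\sfT(L, t\otimes Y)\cong \sfT(t^\vee\otimes L, Y),
\end{displaymath}
and the right-hand side vanishes since $t^\vee\otimes L\in \sfS$ (as $\sfS$ is a tensor ideal) and $Y\in \sfS^\perp$. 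Hence $\sfT^c\cie \sfU$; since $\sfT^c$ compactly generates $\sfT$ (Remark~\ref{rem_gen}) we conclude $\sfU = \sfT$, proving (a).

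For (b), tensor the functorial localisation triangle
\begin{displaymath}
i_*i^!\mathbf{1} \to \mathbf{1} \to j_*j^*\mathbf{1} \to \S i_*i^!\mathbf{1}
\end{displaymath}
from Proposition~\ref{prop_loc_seq}(c) with an arbitrary $X\in\sfT$ to obtain a distinguished triangle
\begin{displaymath}
i_*i^!\mathbf{1}\otimes X \to X \to j_*j^*\mathbf{1}\otimes X \to \S(i_*i^!\mathbf{1}\otimes X).
\end{displaymath}
The first term lies in $\sfS$ since $\sfS$ is a tensor ideal, while the third lies in $\sfS^\perp$ by part (a). The uniqueness clause of Proposition~\ref{prop_loc_seq}(c) then yields the desired natural isomorphisms $i_*i^!\mathbf{1}\otimes X\cong i_*i^! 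X$ and $j_*j^*\mathbf{1}\otimes X\cong j_*j^* X$.

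Part (c) is a direct specialisation of (b). Taking $X=i_*i^!\mathbf{1}$ gives $i_*i^!\mathbf{1}\otimes i_*i^!\mathbf{1}\cong i_*i^!(i_*i^!\mathbf{1})\cong i_*i^!\mathbf{1}$, the last isomorphism using that the unit $\id\to i^!i_*$ is an isomorphism by full faithfulness of $i_*$; the symmetric identity $j^*j_*\cong\id$ handles the second idempotency. Finally, taking $X=j_*j^*\mathbf{1}$ in the first isomorphism of (b) gives $i_*i^!\mathbf{1}\otimes j_*j^*\mathbf{1}\cong i_*i^!(j_*j^*\mathbf{1})$, which vanishes because $i^!j_*=0$ by Proposition~\ref{prop_loc_seq}(a). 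The only real content of the argument lies in (a); once $\sfS^\perp$ is known to be a tensor ideal, (b) is a formal consequence of the uniqueness of the localisation triangle and (c) then drops out by substitution.
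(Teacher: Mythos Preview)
Your proof is correct and follows essentially the same approach as the paper's: the compact generation of $\sfS$, the localising-subcategory argument for (a) using the adjunction from Lemma~\ref{lem_big_rigid}, and the uniqueness of the localisation triangle for (b) all match the paper's line of reasoning. You additionally spell out (c), which the paper leaves as an exercise, and your argument there is fine.
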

\begin{proof}
The claimed smashing localisation sequence exists by Lemma~\ref{lem_smash}. The only hitch is that one needs to know $\sfS$ is in fact generated by objects of $\sfT^c$. In fact $\loc^\otimes(S) = \loc(\thick^\otimes(S))$ and this can be proved directly or deduced from \cite{StevensonActions}*{Lemma~3.8} and is left as an exercise. We begin by proving (a); this is the main point and the other statements follow in a straightforward manner from abstract properties of localisations.

Consider the following full subcategory of $\sfT$
\begin{displaymath}
\sfM = \{ X\in \sfT \; \vert \; X\otimes \sfS^\perp \cie \sfS^\perp\}.
\end{displaymath}
As $\otimes$ is exact and coproduct preserving in each variable and $\sfS^\perp$ is localising it is straightforward to verify that $\sfM$ is a localising subcategory. Let $t$ be a compact object of $\sfT$ and $Y\in \sfS^\perp$. We have isomorphisms for any $Z\in \sfS$
\begin{align*}
\sfT(Z, t\otimes Y) &\cong \sfT(Z, \hom(t^\vee, Y)) \\
&\cong \sfT(Z\otimes t^\vee, Y) \\
&= 0
\end{align*}
where the first isomorphism is via Lemma~\ref{lem_big_rigid}, the second is by adjunction, and the final equality holds as $\sfS$ is a tensor ideal, so $Z\otimes t^\vee\in \sfS$, together with the fact that $Y\in \sfS^\perp$. As $Z\in \sfS$ was arbitrary this shows $t\otimes Y\in \sfS^\perp$, i.e.\ $\sfT^c \cie \sfM$. It follows from Remark~\ref{rem_gen} that $\sfM = \sfT$ which says precisely that $\sfS^\perp$ is a tensor-ideal.

Now we show (b). Consider the localisation triangle
\begin{displaymath}
i_*i^!\mathbf{1} \to \mathbf{1} \to j_*j^*\mathbf{1} \to
\end{displaymath}
from Proposition~\ref{prop_loc_seq}~(c). Given $X\in \sfT$ we can tensor this triangle with $X$ to obtain the distinguished triangle
\begin{displaymath}
i_*i^!\mathbf{1}\otimes X \to X \to j_*j^*\mathbf{1}\otimes X \to
\end{displaymath}
As both $\sfS$ and $\sfS^\perp$ are tensor-ideals the leftmost and rightmost terms of this latter triangle lie in $\sfS$ and $\sfS^\perp$ respectively. Uniqueness and functoriality of localisation triangles, also observed in (c) of the aforementioned proposition, then guarantees unique isomorphisms
\begin{displaymath}
i_*i^!X \cong i_*i^!\mathbf{1}\otimes X \quad \text{and} \quad j_*j^*X \cong j_*j^*\mathbf{1}\otimes X
\end{displaymath}
which can be assembled to the desired isomorphisms of functors. We leave (c) as an exercise so the interested reader can familiarise themselves with the properties of the localisation and acyclisation functors ($j_*j^*$ and $i_*i^!$ respectively) associated to localisation sequences.
\end{proof}

We now apply this proposition to certain subcategories arising from Theorem~\ref{thm_spc_class}. Let $\mathcal{V}$ be a specialisation closed subset of $\Spc\sfT^c$. Recall that $\tau(\mathcal{V})$ denotes the associated thick tensor-ideal
\begin{displaymath}
\tau(\mathcal{V}) = \{t\in \sfT^c \; \vert \; \supp t \cie \mathcal{V}\}.
\end{displaymath}
We set
\begin{displaymath}
\mathit{\Gamma}_\mathcal{V}\sfT = \loc(\tau(\mathcal{V})).
\end{displaymath}
By Lemma~\ref{lem_smash} there is an associated smashing localisation sequence
\begin{displaymath}
\xymatrix{
\mathit{\Gamma}_\mathcal{V}\sfT \ar[r]<0.5ex> \ar@{<-}[r]<-0.5ex> & \sfT \ar[r]<0.5ex> \ar@{<-}[r]<-0.5ex> & L_\mathcal{V}\sfT 
}
\end{displaymath}
and we denote the corresponding acyclisation and localisation functors by $\mathit{\Gamma_\mathcal{V}}$ and $L_\mathcal{V}$ respectively. By \cite{BaRickard}*{Theorem~4.1} $\mathit{\Gamma}_\mathcal{V}\sfT$ is not only a smashing subcategory but a smashing tensor-ideal. Thus applying the preceding theorem yields tensor idempotents $\mathit{\Gamma}_\mathcal{V}\mathbf{1}$ and $L_\mathcal{V}\mathbf{1}$ which give rise to the acyclisation and localisation functors by tensoring. It is these idempotents that are used to define the support; the intuition is that, for $X\in \sfT$, $\mathit{\Gamma}_\mathcal{V}\mathbf{1} \otimes X$ is the ``piece of $X$ supported on $\mathcal{V}$'' and $L_\mathcal{V}\mathbf{1}\otimes X$ is the ``piece of $X$ supported on the complement of $\mathcal{V}$''.

Let us make a quick comment on notation before continuing. From this point forward we will generally denote points of $\Spc\sfT^c$ by $x,y,\ldots$ rather than in a notation suggestive of prime tensor-ideals as in the last section. This is due to the fact that we will work with $\Spc\sfT^c$ as an abstract topological space which, together with $\supp$, forms a support data rather than explicitly with prime tensor-ideals.

\begin{defn}\label{defn_gen_ptfunctors}
For every $x \in \Spc \sfT^c$ we define subsets of the spectrum
\begin{displaymath}
\mathcal{V}(x) = \overline{\{x\}}
\end{displaymath}
and
\begin{displaymath}
\mathcal{Z}(x) = \{y\in \Spc \sfT^c \; \vert \; x\notin \mathcal{V}(y)\}.
\end{displaymath}
Both of these subsets are specialization closed and hence Thomason as we have assumed $\Spc\sfT^c$ is noetherian. We note that
\begin{displaymath}
\mathcal{V}(x) \setminus (\mathcal{Z}(x)\cap \mathcal{V}(x)) = \{x\}
\end{displaymath}
i.e.\ these two Thomason subsets let us pick out the point $x$.
\end{defn}

\begin{defn}\label{defn_gen_ptfunctors2}
Let $x$ be a point of $\Spc \sfT^c$. We define a tensor-idempotent
\begin{displaymath}
\mathit{\Gamma}_x\mathbf{1}= (\mathit{\Gamma}_{\mathcal{V}(x)}\mathbf{1} \otimes L_{\mathcal{Z}(x)}\mathbf{1}).
\end{displaymath}
Following the intuition above, for an object $X\in \sfT$, the object $\mathit{\Gamma}_x\mathbf{1}\otimes X$ is supposed to be the ``piece of $X$ which lives only over the point $x\in \Spc\sfT^c$''.
\end{defn}

\begin{rem}\label{rem_bik6.2_uber}
Given any Thomason subsets $\mathcal{V}$ and $\mathcal{W}$ of $\Spc \sfT^c$ such that 
\begin{displaymath}
\mathcal{V} \setminus (\mathcal{V}\intersec \mathcal{W}) = \{x\}
\end{displaymath}
we can define a similar object by forming the tensor product $\mathit{\Gamma}_\mathcal{V}\mathbf{1} \otimes L_\mathcal{W}\mathbf{1}$. By \cite{BaRickard}*{Corollary~7.5} any such object is uniquely isomorphic to $\mathit{\Gamma}_x\mathbf{1}$.
\end{rem}

\begin{ex}\label{ex_ring_ex}
As a brief respite from the abstraction let us provide a detailed example describing what these idempotents look like in $\D(R)$, the derived category of a  commutative noetherian ring.

Given an element $f\in R$ we define the \emph{stable Koszul complex} $K_\infty(f)$ to be the complex concentrated in degrees 0 and 1
\begin{displaymath}
\cdots \to 0 \to R \to R_f \to 0 \to \cdots
\end{displaymath}
where the only non-zero morphism is the canonical map to the localisation. Given a sequence of elements $\mathbf{f} = \{f_1,\ldots,f_n\}$ of $R$ we set
\begin{displaymath}
K_\infty(\mathbf{f}) = K_\infty(f_1) \otimes \cdots \otimes K_\infty(f_n).
\end{displaymath}
There is a canonical morphism $K_\infty(\mathbf{f}) \to R$ which is clearly a degreewise split epimorphism. We define the \emph{\^Cech complex} of $\mathbf{f}$ to be the suspension of the kernel of this map. As the stable Koszul complex and the desuspension of the \^Cech complex fit into a degreewise split short exact sequence there is a triangle
\begin{displaymath}
K_\infty(\mathbf{f}) \to R \to \check{C}(\mathbf{f}) \to \S K_\infty(\mathbf{f}).
\end{displaymath}
Explicitly we have
\begin{displaymath}
\check{C}(\mathbf{f})^t = \bigoplus_{i_0<\cdots<i_t}R_{f_{i_0}\cdots f_{i_t}}
\end{displaymath}
for $0\leq t \leq n-1$ and $K_\infty(\mathbf{f})$ is essentially the same complex desuspended and with $R$ in degree 0. Given an ideal $I$ of $R$ we can define $K_\infty(I)$ and $\check{C}(I)$ by choosing a set of generators for $I$. It turns out the complex obtained is independent of the choice of generators up to quasi-isomorphism. We observe that both of these complexes are bounded complexes of flat $R$-modules and hence are \emph{K-flat}: tensoring, at the level of chain complexes, with either complex preserves quasi-isomorphisms.

Using these complexes we can give the following well known explicit descriptions of the Rickard idempotents corresponding to some specialization closed subsets. For an ideal $I\cie R$ and $\mathfrak{p}\in \Spec R$ a prime ideal there are natural isomorphisms in $D(R)$:
\begin{itemize}
\item[(1)] $\mathit{\Gamma}_{\mathcal{V}(I)}R \iso K_{\infty}(I)$;
\item[(2)] $L_{\mathcal{V}(I)}R \iso \check{C}(I)$;
\item[(3)] $L_{\mathcal{Z}(\mathfrak{p})}R \iso R_\mathfrak{p}$.
\end{itemize}
In particular, the objects $\mathit{\Gamma}_\mathfrak{p}R = \mathit{\Gamma}_{\mathcal{V}(\mathfrak{p})}R\otimes L_{\mathcal{Z}(\mathfrak{p})}R$, which will give rise to supports on $D(R)$, are naturally isomorphic to $K_{\infty}(\mathfrak{p})\otimes R_\mathfrak{p}$. Statements (1) and (2) are already essentially present in \cite{HartshorneLC}. In the form stated here they are special cases of \cite{GreenleesTate}*{Lemma 5.8}. The third statement can be proved by noting that the full subcategory of complexes with homological support in $\mathcal{U}(\mathfrak{p}) = \Spec R \setminus \mathcal{Z}(\mathfrak{p})$ is the essential image of the inclusion of $D(R_\mathfrak{p})$ and then appealing to the fact that, up to direct summands,
\begin{displaymath}
\D^\mathrm{perf}(R)/\tau(\mathcal{Z}(\mathfrak{p})) \cong \D^\mathrm{perf}(R_\mathfrak{p}).
\end{displaymath}
\end{ex}

We now come to the main definition of this section.

\begin{defn}\label{defn_bigsupport}
For $X\in \sfT$ we define the \emph{support} of $X$ to be
\begin{displaymath}
\supp X = \{x\in \Spc\sfT^c \; \vert \; \mathit{\Gamma}_x\mathbf{1}\otimes X \neq 0\}.
\end{displaymath}
\end{defn}

We do not introduce notation to distinguish this notion of support, applied to compact objects, from the one for the first section as they agree. The following proposition records this fact as well as several other important properties of the support and the tensor-idempotents we have defined.

\begin{prop}[\cite{BaRickard}*{7.17, 7.18}]\label{prop_supp_prop}
The support defined above satisfies the following properties.
\begin{itemize}
\item[$(a)$] The two notions of support coincide for any compact object of $\sfT$.
\item[$(b)$] For any set-indexed family of objects $\{X_\lambda\; \vert \; \lambda \in \Lambda\}$ we have
\begin{displaymath}
\supp (\coprod_{\lambda \in \Lambda} X_\lambda) = \bigcup_{\lambda \in \Lambda} \supp X_\lambda.
\end{displaymath}
\item[$(c)$] For every $X\in \sfT$ $\supp(\S X) = \supp(X)$.
\item[$(d)$] Given a distinguished triangle $X \to Y \to Z \to$ in $\sfT$ we have
\begin{displaymath}
\supp(Y) \cie \supp(X) \cup \supp(Z).
\end{displaymath}
\item[$(e)$] For any $X,Y \in \sfT$ we have $\supp(X\otimes Y)\cie \supp(X)\cap \supp(Y)$.
\item[$(f)$] For any $X\in \sfT$ and Thomason subset $\mathcal{V}\cie \Spc\sfT^c$ we have
\begin{align*}
&\supp (\mathit{\Gamma}_\mathcal{V}\mathbf{1}\otimes X) = (\supp X) \intersec \mathcal{V} \\
&\supp (L_\mathcal{V}\mathbf{1}\otimes X) = (\supp X) \intersec (\Spc \sfT^c \setminus \mathcal{V}).
\end{align*}
\end{itemize}
\end{prop}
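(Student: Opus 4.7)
The plan is to exploit the fact that, for each $x\in \Spc\sfT^c$, the functor $\mathit{\Gamma}_x\mathbf{1}\otimes(-)$ is obtained from two smashing localisation sequences, so it is exact and preserves arbitrary coproducts, and that the family of idempotents $\mathit{\Gamma}_\mathcal{V}\mathbf{1}$, $L_\mathcal{V}\mathbf{1}$ satisfies the tensor-orthogonality and absorption relations established in the theorem preceding Example~\ref{ex_ring_ex}. With these formalities in hand, almost every statement reduces to chasing nonvanishing of $\mathit{\Gamma}_x\mathbf{1}\otimes X$ through categorical manipulations; the single nontrivial input is the matching of the new support with Balmer's in part (a).

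For (b) I would use coproduct preservation of $\mathit{\Gamma}_x\mathbf{1}\otimes(-)$ to obtain $\mathit{\Gamma}_x\mathbf{1}\otimes \coprod X_\lambda\cong \coprod \mathit{\Gamma}_x\mathbf{1}\otimes X_\lambda$, whose vanishing is equivalent to the vanishing of every summand. Part (c) is immediate from $\mathit{\Gamma}_x\mathbf{1}\otimes \S X\cong \S(\mathit{\Gamma}_x\mathbf{1}\otimes X)$. For (d), tensoring the triangle $X\to Y\to Z\to$ with $\mathit{\Gamma}_x\mathbf{1}$ gives a triangle in $\sfT$, and if the two outer terms vanish then so does the middle. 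For (e), use the tensor-idempotence $\mathit{\Gamma}_x\mathbf{1}\otimes \mathit{\Gamma}_x\mathbf{1}\cong \mathit{\Gamma}_x\mathbf{1}$ to factor $\mathit{\Gamma}_x\mathbf{1}\otimes(X\otimes Y)\cong (\mathit{\Gamma}_x\mathbf{1}\otimes X)\otimes(\mathit{\Gamma}_x\mathbf{1}\otimes Y)$, so vanishing of either tensor factor forces the product to vanish.

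Part (f) requires a small absorption calculation, split into two cases. The key observation is that for a specialisation-closed $\mathcal{V}$ one has $x\in \mathcal{V}$ iff $\mathcal{V}(x)\cie \mathcal{V}$, while $x\notin \mathcal{V}$ forces $\mathcal{V}\cie \mathcal{Z}(x)$ (since $\mathcal{V}$ is closed under specialisation). Combined with the absorption identities $\mathit{\Gamma}_{\mathcal{V}_1}\mathbf{1}\otimes \mathit{\Gamma}_{\mathcal{V}_2}\mathbf{1}\cong \mathit{\Gamma}_{\mathcal{V}_1\cap \mathcal{V}_2}\mathbf{1}$ and $L_\mathcal{W}\mathbf{1}\otimes \mathit{\Gamma}_\mathcal{V}\mathbf{1}\cong 0$ whenever $\mathcal{V}\cie \mathcal{W}$ (both immediate from the orthogonality defining the two localisation sequences applied to the generators of the relevant smashing tensor-ideal), I obtain $\mathit{\Gamma}_x\mathbf{1}\otimes \mathit{\Gamma}_\mathcal{V}\mathbf{1}\cong \mathit{\Gamma}_x\mathbf{1}$ if $x\in \mathcal{V}$ and $0$ otherwise. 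The $L_\mathcal{V}\mathbf{1}$ formula then falls out by tensoring $\mathit{\Gamma}_x\mathbf{1}\otimes X$ against the localisation triangle $\mathit{\Gamma}_\mathcal{V}\mathbf{1}\to \mathbf{1}\to L_\mathcal{V}\mathbf{1}\to$ and reading off the two cases.

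I expect (a) to be the main obstacle. The content is that for a compact $t$ the condition $\mathit{\Gamma}_x\mathbf{1}\otimes t\neq 0$ is equivalent to $t\notin \sfP_x$, where $\sfP_x$ denotes the prime tensor-ideal corresponding to $x$. My plan is to exploit that $\mathcal{V}(x)$ and $\mathcal{V}(x)\cap \mathcal{Z}(x)$ are Thomason subsets of $\Spc\sfT^c$ that differ precisely by the singleton $\{x\}$, so Theorem~\ref{thm_spc_class} identifies $\tau(\mathcal{V}(x)\cap \mathcal{Z}(x))$ with the intersection of all primes of $\sfT^c$ except $\sfP_x$ itself restricted to $\mathcal{V}(x)$, and $\tau(\mathcal{V}(x))$ corresponds to all compact $t$ with $\supp t\cie \mathcal{V}(x)$. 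Feeding these identifications into Thomason's localisation theorem (as in the proof of Lemma~\ref{lem_smash}), the composite $\mathit{\Gamma}_{\mathcal{V}(x)}\mathbf{1}\otimes L_{\mathcal{Z}(x)}\mathbf{1}\otimes(-)$ restricted to $\sfT^c$ kills exactly those compacts landing in $\sfP_x$ after idempotent completion, so $\mathit{\Gamma}_x\mathbf{1}\otimes t=0$ iff $t\in \sfP_x$ iff $x\notin \supp t$ in the Balmer sense. This is the argument of \cite{BaRickard}*{7.17}, and the hard part is precisely verifying that no compact object outside $\sfP_x$ is annihilated by this composite idempotent, which is where the noetherianness of $\Spc\sfT^c$ (ensuring every specialisation-closed set is Thomason) is essential.
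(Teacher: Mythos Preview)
The paper does not give its own proof of this proposition; it is stated with a citation to \cite{BaRickard}*{7.17, 7.18} and no argument is supplied. So there is nothing in the paper to compare your proposal against.

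That said, your arguments for (b)--(f) are correct and are essentially the standard ones. The reductions via exactness, coproduct-preservation, and idempotence of $\mathit{\Gamma}_x\mathbf{1}\otimes(-)$ are exactly right, and your treatment of (f) via the dichotomy $\mathcal{V}(x)\cie\mathcal{V}$ versus $\mathcal{V}\cie\mathcal{Z}(x)$ is the clean way to do it.

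Your sketch of (a) is pointed in the right direction but is not yet a proof. The easy half is fine: if $t\in x$ (i.e.\ $x\notin\supp t$ in Balmer's sense) then $\supp t\cie\mathcal{Z}(x)$, so $t\in\mathit{\Gamma}_{\mathcal{Z}(x)}\sfT$ and hence $L_{\mathcal{Z}(x)}\mathbf{1}\otimes t=0$, giving $\mathit{\Gamma}_x t=0$. The converse---that $t\notin x$ forces $\mathit{\Gamma}_x t\neq 0$---is where the real content lies, and your description (``kills exactly those compacts landing in $\sfP_x$ after idempotent completion'') elides the key step. What one actually needs is that the compactly generated category $\mathit{\Gamma}_{\mathcal{V}(x)}\sfT\cap L_{\mathcal{Z}(x)}\sfT$ is nonzero and receives $t$ nontrivially; this is established in \cite{BaRickard} by identifying its compact objects, via Thomason--Neeman localisation, with the idempotent completion of $\tau(\mathcal{V}(x))/\tau(\mathcal{V}(x)\cap\mathcal{Z}(x))$ and then invoking Theorem~\ref{thm_spc_class} to see this quotient detects membership in $x$. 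Your invocation of noetherianity is slightly misplaced: it is used to guarantee $\mathcal{Z}(x)$ is Thomason (so that $L_{\mathcal{Z}(x)}$ exists at all), not for the comparison itself.
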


\begin{rem}
Note that (e) is weaker than the corresponding statement given in Lemma~\ref{lem_supp_props}. It is also worth noting that we have not mentioned that an object $X\in \sfT$ is $0$ if and only if $\supp X = \varnothing$. In fact this latter statement is much more subtle for rigidly-compactly generated tensor triangulated categories. We will show, with an additional hypothesis, that it does indeed hold though when we discuss the local-to-global principle in the next section.
\end{rem}

We defer any detailed examples to the end of the next section. One can, using just the definition of the support and these properties, already make interesting statements but life is much easier once we have the local-to-global principle. 

\section{Tensor actions and the local-to-global principle}
In the last section we saw how to define a notion of support for rigidly-compactly generated tensor triangulated categories using the spectrum of the compacts. In this section we will explain a relative version, where one defines supports in terms of the action of a rigidly-compactly generated tensor triangulated category. What we have seen is then the special case of such a category acting on itself in the obvious way. The main reference for this section is \cite{StevensonActions}.

Continuing our analogy from the first section: if a tensor triangulated category is viewed as a strange sort of ring, we will now study the basics of the corresponding module theory.

\subsection{Actions and submodules}
We define here the notion of left action and express a sinistral bias by only considering left actions. We will omit the ``left'' and refer to them simply as actions. One can, of course, define right actions in the analogous way, although since we deal with symmetric monoidal categories this is mostly cosmetic.

We fix throughout this section a rigidly-compactly generated tensor triangulated category $\sfT$ and a compactly generated triangulated category $\K$. We will continue to assume throughout that $\Spc\sfT^c$ is noetherian. Again a lot of the theory can be developed without this hypothesis, but we avoid the related technicalities for the sake of simplicity.

\begin{defn}\label{defn_action}
A \emph{(left) action} of $\sfT$ on $\K$ is a functor 
\begin{displaymath}
*\colon \sfT\times \K \to \K
\end{displaymath}
which is exact in each variable, i.e.\ for all $X\in \sfT$ and $A\in \K$ the functors $X*(-)$ and $(-)*A$ are exact, together with natural isomorphisms
\begin{displaymath}
a_{X,Y,A}\colon (X\otimes Y)*A \stackrel{\sim}{\to} X*(Y*A)
\end{displaymath}
and
\begin{displaymath}
l_A\colon \mathbf{1}*A \stackrel{\sim}{\to} A
\end{displaymath}
for all $X,Y\in \sfT$, $A\in \K$, compatible with the exactness hypotheses on $(-)*(-)$ and satisfying the following compatibility conditions:
\begin{itemize}
\item[(1)] The associator $a$ makes the following diagram commute for all $X,Y,Z$ in $\sfT$ and $A$ in $\K$
\begin{displaymath}
\xymatrix{
& X*(Y*(Z* A)) & \\
X*((Y \otimes Z)*A) \ar[ur]^{X*a_{Y,Z,A}} & & (X\otimes Y)*(Z*A) \ar[ul]_{a_{X,Y,Z*A}} \\
(X\otimes(Y\otimes Z))*A \ar[u]^{a_{X,Y\otimes Z,A}} && ((X\otimes Y)\otimes Z)*A \ar[u]_{a_{X\otimes Y, Z,A}} \ar[ll]
}
\end{displaymath}
where the bottom arrow is the associator of $(\sfT,\otimes,\mathbf{1})$.
\item[(2)] The unitor $l$ makes the following squares commute for every $X$ in $\sfT$ and $A$ in $\K$
\begin{displaymath}
\xymatrix{
X* (\mathbf{1}*A) \ar[r]^(0.6){X * l_A} & X*A \ar[d]^{1_{X*A}} \\
(X\otimes \mathbf{1})*A \ar[u]^{a_{X,\mathbf{1},A}} \ar[r] & X*A
}
\qquad
\xymatrix{
\mathbf{1}*(X*A) \ar[r]^(0.6){l_{X*A}} & X*A \ar[d]^{1_{X*A}} \\
(\mathbf{1}\otimes X)*A \ar[u]^{a_{\mathbf{1},X,A}} \ar[r] & X*A
}
\end{displaymath}
where the bottom arrows are the right and left unitors of $(\sfT,\otimes,\mathbf{1})$.
\item[(3)] For every $A$ in $\K$ and $r,s \in \ZZ$ the diagram
\begin{displaymath}
\xymatrix{
\S^r \mathbf{1}*\S^s A \ar[r]^(0.6)\sim \ar[d]_{\wr} & \S^{r+s} A \ar[d]^{(-1)^{rs}} \\
\S^r (\mathbf{1}* \S^s A) \ar[r]_(0.6)\sim & \S^{r+s}A
}
\end{displaymath}
is commutative, where the left vertical map comes from exactness in the first variable of the action, the bottom horizontal map is the unitor, and the top map is given by the composite
\begin{displaymath}
\S^r\mathbf{1}*\S^s A \to \S^s(\S^r\mathbf{1} * A) \to \S^{r+s}(\mathbf{1}*A) \stackrel{l}{\to} \S^{r+s}A
\end{displaymath}
whose first two maps use exactness in both variables of the action.
\item[(4)] The functor $*$ distributes over coproducts. Explicitly, for families of objects $\{X_i\}_{i\in I}$ in $\sfT$ and $\{A_j\}_{j\in J}$ in $\K$, and $X$ in $\sfT$, $A$ in $\K$ the canonical maps
\begin{displaymath}
\coprod_i (X_i * A) \stackrel{\sim}{\to} (\coprod_i X_i)* A
\end{displaymath}
and
\begin{displaymath}
\coprod_j (X*A_j) \stackrel{\sim}{\to} X*(\coprod_j A_j)
\end{displaymath}
are isomorphisms.
\end{itemize}
\end{defn}

\begin{rem}
Given composable morphisms $f,f'$ in $\sfT$ and $g,g'$ in $\K$ one has
\begin{displaymath}
(f'*g')(f*g) = (f'f*g'g)
\end{displaymath}
by functoriality of $\sfT \times \K \stackrel{*}{\to} \K$.

We also note it follows easily from the definition that both $0_\sfT*(-)$ and $(-)*0_\K$ are isomorphic to the zero functor. Indeed if $X\in \sfT$ then one has a triangle
\begin{displaymath}
\xymatrix{
X \ar[r]^-{1_X} & X \ar[r] & 0 \ar[r] & \Sigma X
}
\end{displaymath}
and applying $-*A$ for $A\in \K$ we get a triangle
\begin{displaymath}
\xymatrix{
X*A \ar[r]^-{1_{X*A}} & X*A \ar[r] & 0*A \ar[r] & \Sigma X*A.
}
\end{displaymath}
Thus $0*A \cong 0$ for every $A$ in $\K$ and the argument for $(-)*0_\K$ is similar.
\end{rem}

We view $\K$ as a module over $\sfT$ and from now on we will use the terms module and action interchangeably. There are of course, depending on the context, natural notions of $\sfT$-submodule.

\begin{defn}\label{defn_submodule}
Let $\sfL\cie \K$ be a localising (thick) subcategory of $\K$. We say $\sfL$ is a localising (thick) \emph{$\sfT$-submodule} of $\K$ if the functor
\begin{displaymath}
\sfT\times \sfL \stackrel{*}{\to} \K
\end{displaymath}
factors via $\sfL$, i.e.\ $\sfL$ is closed under the action of $\sfT$. In the case that $\K = \sfT$ acts on itself by $\otimes$ this recovers the notion of a localising (thick) tensor-ideal of $\sfT$. We mean the obvious things by a smashing or compactly generated (by compact objects in the ambient category) submodule.
\end{defn}

\begin{notation}\label{not_smallestsubcat}
For a collection of objects $\mathcal{A}$ in $\K$ we denote, as before, by $\loc(\mathcal{A})$ the smallest localising subcategory containing $\mathcal{A}$ and we will use $\loc^*(\mathcal{A})$ to denote the smallest localising $\sfT$-submodule of $\K$ containing $\mathcal{A}$. Extending our earlier notation we let $\Loc_\sfT^*(\K)$ denote the lattice of localising submodules of $\K$ with respect to the action of $\sfT$. Usually the action in question is clear and we omit the $\sfT$ from the notation.

Given in addition a class $\mathcal{X}$ of objects of $\sfT$ we denote by
\begin{displaymath}
\mathcal{X}*\mathcal{A} = \loc^*( X*A\; \vert \; X\in \mathcal{X}, A\in \mathcal{A})
\end{displaymath}
the localising submodule generated by products, with respect to the action, of the objects from $\mathcal{X}$ and $\mathcal{A}$.
\end{notation}

We have the following useful technical result for working with tensor-ideals and submodules generated by prescribed sets of objects.

\begin{lem}[\cite{StevensonActions}*{Lemma~3.12}]\label{lem_loc_commutes2}
Formation of localising $\sfT$-submodules commutes with the action, i.e.\ given a collection of objects $\mathcal{X}$ of $\sfT$ and a collection of objects $\mathcal{A}$ of $\K$ we have
\begin{align*}
\loc^\otimes( \mathcal{X}) * \loc( \mathcal{A}) &= \loc( \mathcal{X})* \loc(\mathcal{A}) \\
&= \mathcal{X} * \mathcal{A}\\
&= \loc( Z*(X*A) \; \vert \; Z\in \sfT, X\in \mathcal{X}, A\in \mathcal{A}).
\end{align*}
\end{lem}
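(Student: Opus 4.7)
The plan is to prove the chain of equalities by establishing the easy direction of the inclusions and then bootstrapping from the smallest generating set up to the largest, using that subcategories cut out by action-membership conditions are themselves localising. Write
\begin{displaymath}
M := \loc(Z*(X*A) \mid Z\in \sfT,\ X\in \mathcal{X},\ A\in \mathcal{A})
\end{displaymath}
for the rightmost member of the claim. The chain of inclusions $M \cie \mathcal{X}*\mathcal{A} \cie \loc(\mathcal{X})*\loc(\mathcal{A}) \cie \loc^\otimes(\mathcal{X})*\loc(\mathcal{A})$ is immediate from Notation~\ref{not_smallestsubcat}: $\mathcal{X}*\mathcal{A}$ is by definition a localising $\sfT$-submodule and therefore contains each $X*A$ together with every $Z*(X*A)$ for $Z\in\sfT$, while the remaining two inclusions simply reflect enlarging the generating collections. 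The task is to close the loop by showing $\loc^\otimes(\mathcal{X})*\loc(\mathcal{A}) \cie M$.

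The first substantive step is to verify that $M$, defined only as a localising subcategory of $\K$, is in fact closed under the action of $\sfT$. Fix $Y\in \sfT$ and consider
\begin{displaymath}
\sfS_Y := \{W\in \K \mid Y*W \in M\}.
\end{displaymath}
Since $Y*(-)$ is exact and commutes with coproducts (axiom (4) of Definition~\ref{defn_action}) and $M$ is localising, $\sfS_Y$ is a localising subcategory of $\K$. For a generator $Z*(X*A)$ of $M$ the associator gives $Y*(Z*(X*A)) \cong (Y\otimes Z)*(X*A)$, which is again a generator of $M$; hence $\sfS_Y$ contains the generators of $M$, and so $M \cie \sfS_Y$. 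As $Y$ was arbitrary this shows $M$ is a localising $\sfT$-submodule; combined with $\mathbf{1}*(X*A) \iso X*A \in M$ via the unitor, this already yields $\mathcal{X}*\mathcal{A} \cie M$, hence $M = \mathcal{X}*\mathcal{A}$.

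The remaining inclusion now follows from two further iterations of the same device. For fixed $A\in \mathcal{A}$ the subcategory
\begin{displaymath}
\mathsf{J}_A := \{Y\in \sfT \mid Y*A \in M\}
\end{displaymath}
is localising in $\sfT$, and since $M$ is now known to be a $\sfT$-submodule the associator identification $(Z\otimes Y)*A \iso Z*(Y*A)$ makes $\mathsf{J}_A$ into a tensor-ideal of $\sfT$; as it contains $\mathcal{X}$ it contains $\loc^\otimes(\mathcal{X})$. Symmetrically, for each fixed $Y\in \loc^\otimes(\mathcal{X})$ the collection $\{B\in \K \mid Y*B \in M\}$ is a localising subcategory of $\K$ containing $\mathcal{A}$, hence containing $\loc(\mathcal{A})$. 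Thus every generator $Y*B$ of $\loc^\otimes(\mathcal{X})*\loc(\mathcal{A})$ lies in $M$, and because $M$ is a localising $\sfT$-submodule the submodule these generate sits inside $M$, closing the loop. The main obstacle is the first step—promoting the \emph{a priori} plain localising subcategory $M$ to a $\sfT$-submodule—after which the two-variable bootstrapping is routine.
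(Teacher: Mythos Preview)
The paper does not actually prove this lemma; it is stated with a citation to \cite{StevensonActions}*{Lemma~3.12} and no proof is given in the present text. Your argument is correct and is precisely the standard one: the key manoeuvre is to show that the ostensibly weaker localising subcategory $M$ is already closed under the $\sfT$-action (via the ``$\sfS_Y$'' trick), after which the two bootstrapping steps through $\mathsf{J}_A$ and its analogue in the second variable close the chain of inclusions. This matches the style of argument used elsewhere in the paper (e.g.\ the proof of Lemma~\ref{lem_big_rigid}) and is the expected proof.
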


Another useful lemma in examples, which frequently removes the need to keep track of submodules versus localising subcategories, is the following analogue of Lemmas \ref{lem_small_unit_gen} and \ref{lem_big_unit_gen}.

\begin{lem}[\cite{StevensonActions}*{Lemma~3.13}]\label{lem_gen_locpreserving}
If $\sfT$ is generated as a localising subcategory by the tensor unit $\mathbf{1}$ then every localising subcategory of $\K$ is a $\sfT$-submodule.
\end{lem}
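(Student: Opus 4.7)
The plan is the standard "test subcategory" argument that recurs throughout the paper (for instance, in the proof of Lemma~\ref{lem_big_rigid}). Fix a localising subcategory $\sfL \cie \K$; we wish to show that $X * A \in \sfL$ for every $X \in \sfT$ and every $A \in \sfL$. Rather than varying both arguments at once, I fix an arbitrary $A \in \sfL$ and let $X$ range.

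Concretely, for each $A \in \sfL$ I would consider the full subcategory
\begin{displaymath}
\sfS_A = \{X \in \sfT \; \vert \; X * A \in \sfL\}
\end{displaymath}
of $\sfT$. The first step is to check that $\sfS_A$ is a localising subcategory of $\sfT$. This uses only the axioms of an action from Definition~\ref{defn_action}: the functor $(-)*A \colon \sfT \to \K$ is exact (by the exactness-in-each-variable hypothesis) and preserves coproducts (by axiom~(4)); hence the preimage under $(-)*A$ of the localising subcategory $\sfL \cie \K$ is closed under suspensions, cones, and coproducts, which is exactly the assertion that $\sfS_A$ is localising in $\sfT$.

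The second step is to observe that $\mathbf{1} \in \sfS_A$. This is immediate from the unitor isomorphism $l_A \colon \mathbf{1} * A \stackrel{\sim}{\to} A$, which places $\mathbf{1} * A$ in $\sfL$ since $A \in \sfL$. Combining the two steps with the hypothesis $\loc(\mathbf{1}) = \sfT$, we obtain $\sfT = \loc(\mathbf{1}) \cie \sfS_A \cie \sfT$, and thus $\sfS_A = \sfT$. Since $A \in \sfL$ was arbitrary, this shows that $X * A \in \sfL$ for all $X \in \sfT$ and $A \in \sfL$, which is precisely the submodule condition of Definition~\ref{defn_submodule}.

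There is no real obstacle here; the lemma is essentially a formal consequence of the fact that the functors $(-)*A$ are exact and coproduct-preserving. The only thing to be slightly careful about is invoking the correct axiom of an action (exactness in the first variable plus coproduct distribution in axiom~(4)) to guarantee $\sfS_A$ is localising, and using the unitor rather than any tensor-ideal property to get $\mathbf{1} \in \sfS_A$. The proof is essentially identical in structure to Lemma~\ref{lem_small_unit_gen} and Lemma~\ref{lem_big_unit_gen}, as the statement of the lemma already advertises.
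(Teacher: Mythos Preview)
Your argument is correct and is exactly the standard one. The paper itself does not supply a proof of this lemma, merely citing \cite{StevensonActions}*{Lemma~3.13}; the analogous Lemma~\ref{lem_big_unit_gen} is likewise left as an exercise. Your test-subcategory argument is the intended solution to that exercise and transports verbatim to the module setting, so there is nothing to add.
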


\subsection{Supports}

We now introduce supports for $\K$ relative to the action of $\sfT$. This is done in the naive way, extending what we have seen in the last section. Recall from Definition~\ref{defn_gen_ptfunctors2} that for each $x\in \Spc\sfT^c$ we have objects
\begin{displaymath}
\mathit{\Gamma}_x\mathbf{1} = \mathit{\Gamma}_{\mathcal{V}(x)}\mathbf{1} \otimes L_{\mathcal{Z}(x)}\mathbf{1}.
\end{displaymath}

\begin{notation}
We use $\mathit{\Gamma}_x\K$, for $x\in \Spc \sfT^c$, to denote the essential image of $\mathit{\Gamma}_x\mathbf{1}*(-)$. It is a $\sfT$-submodule as for any $X \in \sfT$ and $A\in \mathit{\Gamma}_x \K$
\begin{align*}
X*A \iso X*(\mathit{\Gamma}_x\mathbf{1}*A') \iso \mathit{\Gamma}_x\mathbf{1}*(X*A')
\end{align*}
for some $A'\in \K$ as, by virtue of being in the essential image of $\mathit{\Gamma}_x\mathbf{1}\otimes(-)$, we must have such an $A'$ and an isomorphism
\begin{displaymath}
A\cong \mathit{\Gamma}_x\mathbf{1}\otimes A'.
\end{displaymath}
Similarly we will often write $\mathit{\Gamma}_xA$ as shorthand for $\mathit{\Gamma}_x\mathbf{1}*A$.
\end{notation}

\begin{defn}\label{defn_actionsupport}
Given $A$ in $\K$ we define the support of $A$ to be the set
\begin{displaymath}
\supp_{(\sfT,*)} A = \{x\in \Spc \sfT^c \; \vert \; \mathit{\Gamma}_x A \neq 0\}.
\end{displaymath}
When the action in question is clear we will omit the subscript from the notation.
\end{defn}

We have analogous properties for this notion of support as for the support when $\sfT$ acts on itself as in Proposition~\ref{prop_supp_prop}.

\begin{prop}\label{prop_abs_supp_prop}
The support assignment $\supp_{(\sfT,*)}$ satisfies the following properties:
\begin{itemize}
\item[$(a)$] given a triangle
\begin{displaymath}
A \to B \to C \to \S A
\end{displaymath}
in $\K$ we have $\supp B \cie \supp A \un \supp C$;
\item[$(b)$] for any $A$ in $\K$ and $i\in \ZZ$
\begin{displaymath}
\supp A = \supp \S^i A;
\end{displaymath}
\item[$(c)$] given a set-indexed family $\{A_\l\}_{\l \in \Lambda}$ of objects of $\K$ there is an equality
\begin{displaymath}
\supp \coprod_\l A_\l = \Un_\l \supp A_\l;
\end{displaymath}
\item[$(d)$] the support satisfies the separation axiom, i.e.\ for every specialization closed subset $\mathcal{V} \cie \Spc \sfT^c$ and every object $A$ of $\K$
\begin{align*}
&\supp \mathit{\Gamma}_\mathcal{V}\mathbf{1}* A = (\supp A) \intersec \mathcal{V} \\
&\supp L_\mathcal{V}\mathbf{1}*A = (\supp A) \intersec (\Spc \sfT^c \setminus \mathcal{V}).
\end{align*}
\end{itemize}
\end{prop}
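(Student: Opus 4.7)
The strategy is to leverage the tensor-idempotent objects $\mathit{\Gamma}_x\mathbf{1}$ together with the exactness and coproduct-preservation built into the definition of the action. The central observation I would use throughout is that, by the associator and unitor axioms of Definition~\ref{defn_action}, there is a natural isomorphism
\[
\mathit{\Gamma}_x\mathbf{1} * (X * A) \iso (\mathit{\Gamma}_x\mathbf{1} \otimes X) * A
\]
for every $X \in \sfT$ and $A \in \K$. This reduces questions about $\mathit{\Gamma}_x(-)$ applied to objects of the form $X*A$ to tensor-idempotent computations inside $\sfT$, where the support theory of the previous section and Proposition~\ref{prop_supp_prop} are available.

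For (a), (b) and (c) I would observe that $\mathit{\Gamma}_x\mathbf{1}*(-)$ is exact and preserves coproducts, by exactness in the second variable of the action and by axiom (4) of Definition~\ref{defn_action}. Part (a) then follows by applying this functor to the triangle $A\to B\to C\to\S A$ and using that if the two outer terms of a triangle vanish then so does the middle; (b) is immediate from the commutation of $\mathit{\Gamma}_x\mathbf{1}*(-)$ with suspension; and (c) follows by pulling the coproduct out of the action and noting that a coproduct vanishes if and only if each summand does.

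The substantive part is (d). Using the displayed isomorphism we have $\mathit{\Gamma}_x(\mathit{\Gamma}_\mathcal{V}\mathbf{1}*A) \iso (\mathit{\Gamma}_x\mathbf{1}\otimes\mathit{\Gamma}_\mathcal{V}\mathbf{1})*A$, so it suffices to compute $\mathit{\Gamma}_x\mathbf{1}\otimes\mathit{\Gamma}_\mathcal{V}\mathbf{1}$ in $\sfT$ in two cases. If $x\in\mathcal{V}$ then $\mathcal{V}(x)\cie\mathcal{V}$; tensoring the localisation triangle $\mathit{\Gamma}_\mathcal{V}\mathbf{1}\to\mathbf{1}\to L_\mathcal{V}\mathbf{1}\to$ with $\mathit{\Gamma}_{\mathcal{V}(x)}\mathbf{1}$ and using $L_\mathcal{V}\sfT\intersec\mathit{\Gamma}_\mathcal{V}\sfT=0$ (the tensor product of the last term lies in both tensor-ideals) yields $\mathit{\Gamma}_\mathcal{V}\mathbf{1}\otimes\mathit{\Gamma}_{\mathcal{V}(x)}\mathbf{1}\iso\mathit{\Gamma}_{\mathcal{V}(x)}\mathbf{1}$, and consequently $\mathit{\Gamma}_x\mathbf{1}\otimes\mathit{\Gamma}_\mathcal{V}\mathbf{1}\iso\mathit{\Gamma}_x\mathbf{1}$. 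If instead $x\notin\mathcal{V}$, then since $\mathcal{V}$ is specialisation closed one checks $\mathcal{V}\cie\mathcal{Z}(x)$; hence $\mathit{\Gamma}_\mathcal{V}\mathbf{1}$ lies in the tensor-ideal $\mathit{\Gamma}_{\mathcal{Z}(x)}\sfT$, and the same vanishing argument yields $L_{\mathcal{Z}(x)}\mathbf{1}\otimes\mathit{\Gamma}_\mathcal{V}\mathbf{1}=0$ and so $\mathit{\Gamma}_x\mathbf{1}\otimes\mathit{\Gamma}_\mathcal{V}\mathbf{1}=0$. Substituting back gives the first formula of (d); the second follows either by the dual computation, or more economically by applying (a) and (c) together with the first formula to the localisation triangle $\mathit{\Gamma}_\mathcal{V}\mathbf{1}*A\to A\to L_\mathcal{V}\mathbf{1}*A\to$.

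The main obstacle is really only the pair of identities $\mathit{\Gamma}_x\mathbf{1}\otimes\mathit{\Gamma}_\mathcal{V}\mathbf{1}\iso\mathit{\Gamma}_x\mathbf{1}$ (when $x\in\mathcal{V}$) and $\mathit{\Gamma}_x\mathbf{1}\otimes\mathit{\Gamma}_\mathcal{V}\mathbf{1}=0$ (when $x\notin\mathcal{V}$) in $\sfT$ itself; the passage to the module $\K$ via the associator is formal. These identities can also be read off from Proposition~\ref{prop_supp_prop}(f) combined with the uniqueness statement in Remark~\ref{rem_bik6.2_uber}, so once they are in hand, the rest is a routine diagram chase.
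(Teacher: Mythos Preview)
The paper states this proposition without proof, treating it as the evident analogue of Proposition~\ref{prop_supp_prop} for the relative setting, so there is no argument to compare against directly. Your proof is correct and is the standard one: parts (a)--(c) follow at once from exactness and coproduct preservation of $\mathit{\Gamma}_x\mathbf{1}*(-)$, and part (d) is reduced via the associator to the idempotent identities $\mathit{\Gamma}_x\mathbf{1}\otimes\mathit{\Gamma}_\mathcal{V}\mathbf{1}\iso\mathit{\Gamma}_x\mathbf{1}$ for $x\in\mathcal{V}$ and $\mathit{\Gamma}_x\mathbf{1}\otimes\mathit{\Gamma}_\mathcal{V}\mathbf{1}=0$ for $x\notin\mathcal{V}$, which you verify cleanly using the containments $\mathcal{V}(x)\cie\mathcal{V}$ and $\mathcal{V}\cie\mathcal{Z}(x)$ respectively.

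One small point: your ``more economical'' derivation of the second formula in (d) from the localisation triangle and part (a) alone only yields $(\supp A)\setminus\mathcal{V}\cie\supp(L_\mathcal{V}\mathbf{1}*A)\cie\supp A$; to conclude that no point of $\mathcal{V}$ lies in $\supp(L_\mathcal{V}\mathbf{1}*A)$ you still need to invoke $\mathit{\Gamma}_\mathcal{V}\mathbf{1}\otimes L_\mathcal{V}\mathbf{1}=0$ (for instance by applying the already-proved first formula to $L_\mathcal{V}\mathbf{1}*A$). The dual computation you mention first is the cleaner route and goes through exactly as for $\mathit{\Gamma}_\mathcal{V}$. The reference to (c) in that sentence appears to be superfluous.
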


\subsection{The local-to-global principle and parametrising submodules}

As we saw an action allows us to define a sublattice $\Loc^*(\K)$ of the collection of all localising subcategories of $\K$. The hope is twofold: on one hand we hope that the lattice $\Loc^*(\K)$ is more tractable than $\Loc(\K)$ and on the other hand we hope the action gives us the necessary tools to analyse $\Loc^*(\K)$. In this section we will define assignments relating supports to localising submodules. These assignments are a step toward achieving our two hopes. We then introduce the local-to-global principle, a version of which was first formalised in \cite{BIKstrat2} but is already implicit in the work of Neeman \cite{NeeChro}, and explain some of its consequences for these assignments. We will prove a general result showing that the local-to-global principle holds in many cases of interest (see also Aside~\ref{aside_ltg} for a brief discussion of an even more general approach). This powerful technical result allows us to reduce classification problems into smaller (but usually still difficult) pieces. Along the way we will finally show that the support, in the sense we have defined above, is sufficiently refined to determine whether or not an object is $0$.

\begin{defn}\label{defn_vis_sigmatau}
We define order preserving assignments
\begin{displaymath}
\left\{ \begin{array}{c}
\text{subsets of}\; \Spc\sfT^c 
\end{array} \right\}
\xymatrix{ \ar[r]<1ex>^\t \ar@{<-}[r]<-1ex>_\s &} 
\Loc^*(\K)
\end{displaymath}
where both collections are ordered by inclusion. For a localising submodule $\sfL$ we set
\begin{displaymath}
\s(\sfL) = \supp \sfL = \{x \in \Spc\sfT^c \; \vert \; \mathit{\Gamma}_x\sfL \neq 0\}
\end{displaymath}
and for a subset $W\cie \Spc\sfT^c$ we set
\begin{displaymath}
\t(W) = \{A \in \K \; \vert \; \supp A \cie W\}.
\end{displaymath}
Both of these are well defined; this is clear for $\s$ and for $\t$ it follows from Proposition \ref{prop_abs_supp_prop}.
\end{defn}

\begin{defn}\label{defn_ltg}
We say $\sfT\times\K \stackrel{*}{\to} \K$ satisfies the \emph{local-to-global principle} if for each $A$ in $\K$
\begin{displaymath}
\loc^*(A) = \loc^*(\mathit{\Gamma}_x A \; \vert \; x\in \Spc \sfT^c).
\end{displaymath}
\end{defn}

The local-to-global principle has the following rather pleasing consequences for the assignments $\s$ and $\t$ of Definition \ref{defn_vis_sigmatau}.

\begin{lem}\label{general_im_tau}
Suppose the local-to-global principle holds for the action of $\sfT$ on $\K$ and let $W$ be a subset of $\Spc \sfT^c$. Then
\begin{displaymath}
\t(W) = \loc^*(\mathit{\Gamma}_x\K \; \vert \; x\in W\intersec \s\K ).
\end{displaymath}
\end{lem}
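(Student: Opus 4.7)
The plan is to prove the equality by two inclusions, with the local-to-global principle doing the heavy lifting on one side and the separation axiom doing it on the other.

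First I would prove the inclusion $\loc^*(\mathit{\Gamma}_x\K \mid x\in W\cap \s\K) \subseteq \t(W)$. For this, observe that for any $x\in \Spc\sfT^c$ and any $B\in \K$, two applications of the separation axiom in Proposition~\ref{prop_abs_supp_prop}(d) to the defining formula $\mathit{\Gamma}_x\mathbf{1} = \mathit{\Gamma}_{\mathcal{V}(x)}\mathbf{1}\otimes L_{\mathcal{Z}(x)}\mathbf{1}$ give
\begin{displaymath}
\supp(\mathit{\Gamma}_x\mathbf{1}*B) \subseteq \mathcal{V}(x)\cap (\Spc\sfT^c\setminus \mathcal{Z}(x)) = \{x\}.
\end{displaymath}
Hence whenever $x\in W$, every object of $\mathit{\Gamma}_x\K$ lies in $\t(W)$. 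To conclude the inclusion, I would observe that $\t(W)$ is a localising $\sfT$-submodule: closure under triangles, suspensions and coproducts is immediate from parts (a)--(c) of Proposition~\ref{prop_abs_supp_prop}, and closure under the action of $\sfT$ follows because the natural isomorphism $\mathit{\Gamma}_x(X*A)\cong X*\mathit{\Gamma}_x A$ (via associativity and symmetry of $\otimes$) shows $\supp(X*A)\subseteq \supp A$.

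For the reverse inclusion $\t(W)\subseteq \loc^*(\mathit{\Gamma}_x\K \mid x\in W\cap \s\K)$, take $A\in \t(W)$. By the local-to-global principle,
\begin{displaymath}
A\in \loc^*(A) = \loc^*(\mathit{\Gamma}_x A \mid x\in \Spc\sfT^c).
\end{displaymath}
The generators $\mathit{\Gamma}_x A$ with $x\notin \supp A$ are zero by the very definition of $\supp$, so it suffices to range $x$ over $\supp A$. Finally, $\supp A\subseteq W$ by assumption, and $\supp A\subseteq \s\K$ because $\mathit{\Gamma}_x A\neq 0$ exhibits $\mathit{\Gamma}_x\K$ as non-zero; hence $\supp A\subseteq W\cap \s\K$ and each $\mathit{\Gamma}_x A$ lies in $\mathit{\Gamma}_x\K$, yielding the desired containment.

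The one genuinely subtle point, and the main thing I would want to double-check, is that $\t(W)$ really is closed under the action so the left-hand side is a submodule and not merely a localising subcategory; once the formula $\mathit{\Gamma}_x(X*A)\cong X*\mathit{\Gamma}_x A$ is in hand this is automatic, but it is the only step that requires anything beyond the bulleted properties of support. Everything else is essentially bookkeeping organised around the local-to-global principle.
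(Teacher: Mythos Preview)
Your proof is correct and follows essentially the same approach as the paper's: the paper compresses both inclusions into a single chain of equalities $\t(W) = \loc^*(A \mid \supp A \subseteq W) = \loc^*(\mathit{\Gamma}_x A \mid A\in\K,\, x\in W) = \ldots$, using the local-to-global principle to pass from the first to the second line and the separation/vanishing observations for the rest, while you unpack this into two explicit containments. Your extra care about why $\t(W)$ is a $\sfT$-submodule (via $\mathit{\Gamma}_x(X*A)\cong X*\mathit{\Gamma}_xA$) is exactly the point the paper glosses by saying $\t$ is ``well defined'' by Proposition~\ref{prop_abs_supp_prop}.
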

\begin{proof}
By the local-to-global principle we have for every object $A$ of $\K$ an equality
\begin{displaymath}
\loc^*(A) = \loc^*(\mathit{\Gamma}_xA \; \vert \; x\in \Spc\sfT^c).
\end{displaymath}
Thus
\begin{align*}
\t(W) &= \loc^*(A \; \vert \; \supp A \cie W) \\
&= \loc^*(\mathit{\Gamma}_x A \; \vert \; A\in \K, x\in W) \\
&= \loc^*(\mathit{\Gamma}_x A \; \vert \; A\in \K, x\in W\intersec \s\K) \\
&= \loc^*( \mathit{\Gamma}_x\K \; \vert \; x\in W\intersec \s\K ).
\end{align*}
\end{proof}

\begin{prop}\label{general_tau_inj}
Suppose the local-to-global principle holds for the action of $\sfT$ on $\K$ and let $W$ be a subset of $\Spc \sfT^c$. Then there is an equality of subsets
\begin{displaymath}
\s\t(W) = W \intersec \s\K.
\end{displaymath}
In particular, $\t$ is injective when restricted to subsets of $\s\K$.
\end{prop}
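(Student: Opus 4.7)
My plan is to prove the two set-theoretic containments separately; the ``in particular'' claim will then be a trivial consequence. Both directions rely only on the separation axiom of Proposition~\ref{prop_abs_supp_prop}(d), the tensor-idempotence of $\mathit{\Gamma}_x\mathbf{1}$, and the associator for the action — the local-to-global principle is not needed for the equality itself, only to make the statement meaningfully tight in combination with the preceding lemma.

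For the containment $\s\t(W)\subseteq W\cap \s\K$, I would argue directly from the definitions. Suppose $x\in \s\t(W)$. Then there exists $A\in \t(W)$ with $\mathit{\Gamma}_x A\neq 0$, hence $x\in \supp A$. Since $A\in \t(W)$ we have $\supp A\cie W$, so $x\in W$, and the very witness $A$ shows $x\in \s\K$. No further machinery is required.

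The reverse containment $W\cap \s\K\subseteq \s\t(W)$ is the heart of the argument. Given $x\in W\cap \s\K$, pick $B\in \K$ with $\mathit{\Gamma}_x B\neq 0$ and consider the object $A:=\mathit{\Gamma}_x B = \mathit{\Gamma}_{\mathcal{V}(x)}\mathbf{1}\otimes L_{\mathcal{Z}(x)}\mathbf{1} * B$. Using the associator to split the action and applying Proposition~\ref{prop_abs_supp_prop}(d) twice, one computes
\begin{displaymath}
\supp A = (\supp B)\intersec \mathcal{V}(x)\intersec (\Spc\sfT^c\setminus \mathcal{Z}(x)) \cie \mathcal{V}(x)\setminus(\mathcal{V}(x)\cap \mathcal{Z}(x)) = \{x\} \cie W,
\end{displaymath}
so $A\in \t(W)$. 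It remains to check $x\in \supp A$, i.e.\ $\mathit{\Gamma}_x A\neq 0$. By the associator and the tensor-idempotence $\mathit{\Gamma}_x\mathbf{1}\otimes \mathit{\Gamma}_x\mathbf{1}\cong \mathit{\Gamma}_x\mathbf{1}$ (which follows from Theorem~2.15(c) applied to the two constituent smashing localisations), we have $\mathit{\Gamma}_x A = \mathit{\Gamma}_x\mathbf{1}*(\mathit{\Gamma}_x\mathbf{1}*B) \cong (\mathit{\Gamma}_x\mathbf{1}\otimes \mathit{\Gamma}_x\mathbf{1})*B \cong \mathit{\Gamma}_x B \neq 0$. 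Hence $x\in \supp A\cie \s\t(W)$.

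The only potentially delicate point is the idempotence statement $\mathit{\Gamma}_x\mathbf{1}\otimes \mathit{\Gamma}_x\mathbf{1}\cong \mathit{\Gamma}_x\mathbf{1}$, which one must extract from Theorem~2.15(c) applied to each of $\mathit{\Gamma}_{\mathcal{V}(x)}\mathbf{1}$ and $L_{\mathcal{Z}(x)}\mathbf{1}$ separately, combined with the fact that these two idempotents commute past one another under $\otimes$. Everything else is bookkeeping with the separation axiom. Finally, for the ``in particular'' clause: if $W_1,W_2\cie \s\K$ satisfy $\t(W_1)=\t(W_2)$, then applying $\s$ and using the equality just proved gives $W_1 = W_1\cap \s\K = \s\t(W_1) = \s\t(W_2) = W_2\cap \s\K = W_2$, so $\t$ is injective on subsets of $\s\K$.
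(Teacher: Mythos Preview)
Your proof is correct, and as you yourself observe, it does not use the local-to-global principle at all. This is a genuinely different route from the paper's argument. The paper proceeds by first invoking the preceding lemma (Lemma~\ref{general_im_tau}), which \emph{does} use the local-to-global principle, to identify $\tau(W) = \loc^*(\mathit{\Gamma}_x\K \; \vert \; x\in W\cap \s\K)$, and then reads off the support of this explicitly generated submodule. Your argument instead works directly from the definitions, producing for each $x\in W\cap\s\K$ an explicit witness $\mathit{\Gamma}_x B\in\tau(W)$ and checking its support via the separation axiom and the idempotence of $\mathit{\Gamma}_x\mathbf{1}$.

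The upshot is that your approach actually establishes a slightly stronger statement: the equality $\s\tau(W)=W\cap\s\K$ holds without any hypothesis on the local-to-global principle. The paper's route, while less elementary, has the virtue of tying the result to the structural description of $\tau(W)$ obtained in Lemma~\ref{general_im_tau}, which is the content one actually needs in applications (e.g.\ in the proof sketch of Theorem~\ref{cor_hyper_min}). So the local-to-global hypothesis in the proposition is there because it is packaged together with that lemma, not because the bare equality $\s\tau(W)=W\cap\s\K$ requires it.
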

\begin{proof}
With $W\cie \Spc \sfT^c$ as in the statement we have
\begin{align*}
\s\t(W) &= \supp \t(W) \\
&= \supp \loc^*( \mathit{\Gamma}_x\K \; \vert \; x\in W\intersec \s\K ),
\end{align*}
the first equality by definition and the second by the last lemma. Thus $\s\t(W) = W\intersec \s\K$ as claimed: by the properties of the support (Proposition \ref{prop_abs_supp_prop}) we have $\s\t(W) \cie W\intersec \s\K$ and it must in fact be all of $W\intersec \s\K$ as $x\in \s\K$ if and only if $\mathit{\Gamma}_x\K$ contains a non-zero object.
\end{proof}

Now we go about showing that we have access to these results in significant generality.

\begin{defn}\label{defn_hocolim}
We will say the tensor triangulated category $\sfT$ \emph{has a model} if it occurs as the homotopy category of a monoidal model category.
\end{defn}

Our main interest in such categories is that the existence of a monoidal model provides a good theory of homotopy colimits compatible with the tensor product. 

\begin{rem}
Of course instead of requiring that $\sfT$ arose from a monoidal model category we could, for instance, ask that $\sfT$ was the underlying category of a stable monoidal derivator. In fact we will only use directed homotopy colimits so one could use a weaker notion of a stable monoidal ``derivator'' only having homotopy left and right Kan extensions for certain diagrams; to be slightly more precise one could just ask for homotopy left and right Kan extensions along the smallest full $2$-subcategory of the category of small categories satisfying certain natural closure conditions and containing the ordinals (one can see the discussion before \cite{GrothPSD} Definition 4.21 for further details).
\end{rem}

We begin by showing that, when $\sfT$ has a model, taking the union of a chain of specialisation closed subsets is compatible with taking the homotopy colimit of the associated idempotents.

\begin{lem}\label{lem_hocolim_ltg}
Suppose $\sfT$ has a model. Then for any chain $\{\mathcal{V}_i\}_{i\in I}$ of specialisation closed subsets of $\Spc \sfT^c$ with union $\mathcal{V}$ there is an isomorphism
\begin{displaymath}
\mathit{\Gamma}_\mathcal{V}\mathbf{1} \iso \hocolim \mathit{\Gamma}_{\mathcal{V}_i}\mathbf{1}
\end{displaymath}
where the structure maps are the canonical ones.
\end{lem}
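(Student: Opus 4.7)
My plan is to realise $\hocolim \mathit{\Gamma}_{\mathcal{V}_i}\mathbf{1}$ as the left term of a localisation triangle whose right term lies in $L_\mathcal{V}\sfT = (\mathit{\Gamma}_\mathcal{V}\sfT)^\perp$, and then invoke uniqueness of localisation triangles (Proposition~\ref{prop_loc_seq}(c)) to identify it with $\mathit{\Gamma}_\mathcal{V}\mathbf{1}$. First, for $i \leq j$ the inclusion $\mathcal{V}_i \cie \mathcal{V}_j$ yields $\tau(\mathcal{V}_i) \cie \tau(\mathcal{V}_j)$ and hence $\mathit{\Gamma}_{\mathcal{V}_i}\sfT \cie \mathit{\Gamma}_{\mathcal{V}_j}\sfT$. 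Thus the composite $\mathit{\Gamma}_{\mathcal{V}_i}\mathbf{1} \to \mathbf{1} \to L_{\mathcal{V}_j}\mathbf{1}$ vanishes and the canonical map $\mathit{\Gamma}_{\mathcal{V}_i}\mathbf{1} \to \mathbf{1}$ lifts uniquely through $\mathit{\Gamma}_{\mathcal{V}_j}\mathbf{1} \to \mathbf{1}$, giving coherent structure maps $\mathit{\Gamma}_{\mathcal{V}_i}\mathbf{1} \to \mathit{\Gamma}_{\mathcal{V}_j}\mathbf{1}$ and, by triangulated completion, $L_{\mathcal{V}_i}\mathbf{1} \to L_{\mathcal{V}_j}\mathbf{1}$, with the middle column of the resulting morphism of triangles being the identity on $\mathbf{1}$.

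Since $\sfT$ has a model, filtered homotopy colimits are exact, so applying $\hocolim$ to the directed system of localisation triangles produces a distinguished triangle
\begin{displaymath}
\hocolim \mathit{\Gamma}_{\mathcal{V}_i}\mathbf{1} \to \mathbf{1} \to \hocolim L_{\mathcal{V}_i}\mathbf{1} \to \S \hocolim \mathit{\Gamma}_{\mathcal{V}_i}\mathbf{1}
\end{displaymath}
(the middle term is the hocolim of the constant diagram on $\mathbf{1}$, hence $\mathbf{1}$ itself). The left term lies in $\mathit{\Gamma}_\mathcal{V}\sfT$ since each $\mathit{\Gamma}_{\mathcal{V}_i}\mathbf{1}$ does and $\mathit{\Gamma}_\mathcal{V}\sfT$ is localising. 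By Proposition~\ref{prop_loc_seq}(c) it then suffices to verify that the right term lies in $L_\mathcal{V}\sfT$.

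The crucial step, and what I expect to be the main technical point, is the identity $\tau(\mathcal{V}) = \bigcup_i \tau(\mathcal{V}_i)$. For $t \in \sfT^c$ the support $\supp t$ is a closed subset of the noetherian space $\Spc\sfT^c$, so it decomposes into finitely many irreducible components; each generic point of such a component lies in $\mathcal{V} = \bigcup_i \mathcal{V}_i$, hence in some $\mathcal{V}_i$, and as $\mathcal{V}_i$ is specialisation closed the whole component lies in $\mathcal{V}_i$. Using that the chain is directed, these finitely many indices can be consolidated to a single $i$ with $\supp t \cie \mathcal{V}_i$, so $t \in \tau(\mathcal{V}_i)$. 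It follows that $\mathit{\Gamma}_\mathcal{V}\sfT = \loc(\bigcup_i \tau(\mathcal{V}_i))$, so to check the right term is in $(\mathit{\Gamma}_\mathcal{V}\sfT)^\perp$ it is enough to compute $\Hom$ out of each $t \in \tau(\mathcal{V}_{i_0})$. For such a $t$ and every $j \geq i_0$ we have $t \in \tau(\mathcal{V}_j) \cie \mathit{\Gamma}_{\mathcal{V}_j}\sfT$, so $\sfT(t, \S^n L_{\mathcal{V}_j}\mathbf{1}) = 0$. By compactness of $t$,
\begin{displaymath}
\sfT(t, \S^n \hocolim_j L_{\mathcal{V}_j}\mathbf{1}) \cong \colim_j \sfT(t, \S^n L_{\mathcal{V}_j}\mathbf{1}) = 0
\end{displaymath}
for all $n \in \ZZ$, as required.

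Having exhibited a distinguished triangle $A \to \mathbf{1} \to B \to$ with $A \in \mathit{\Gamma}_\mathcal{V}\sfT$ and $B \in L_\mathcal{V}\sfT = (\mathit{\Gamma}_\mathcal{V}\sfT)^\perp$, uniqueness and functoriality of the localisation triangle for $\mathbf{1}$ (Proposition~\ref{prop_loc_seq}(c)) forces a unique isomorphism $A \cong \mathit{\Gamma}_\mathcal{V}\mathbf{1}$ compatible with the maps to $\mathbf{1}$, and unwinding the construction identifies this with the canonical map from the homotopy colimit, completing the proof.
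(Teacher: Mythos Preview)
Your proof is correct and shares the same core observation as the paper---the noetherian argument that $\tau(\mathcal{V}) = \bigcup_i \tau(\mathcal{V}_i)$---but the surrounding architecture differs. The paper instead completes the induced comparison map $\hocolim \mathit{\Gamma}_{\mathcal{V}_i}\mathbf{1} \to \mathit{\Gamma}_{\mathcal{V}}\mathbf{1}$ to a triangle with cone $Z$ and shows $Z \cong 0$: for compact $t$ with $\supp t \cie \mathcal{V}$ one finds $j$ with $\supp t \cie \mathcal{V}_j$, and then proves $\mathit{\Gamma}_{\mathcal{V}_j}Z \cong 0$ by tensoring the defining triangle with $\mathit{\Gamma}_{\mathcal{V}_j}\mathbf{1}$ and using that the tensor product commutes with homotopy colimits. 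So the paper exploits the \emph{monoidal} aspect of the model (compatibility of $\otimes$ with $\hocolim$), whereas you exploit exactness of filtered $\hocolim$ together with the fact that $\sfT(t,-)$ commutes with filtered homotopy colimits for compact $t$. Your route via uniqueness of localisation triangles is arguably more streamlined, since it avoids introducing the auxiliary object $Z$ and the extra adjunction step; the paper's route, on the other hand, makes the role of the monoidal model more visible and yields the slightly sharper intermediate statement that $\mathit{\Gamma}_{\mathcal{V}_j}\mathbf{1} \otimes \hocolim_i \mathit{\Gamma}_{\mathcal{V}_i}\mathbf{1} \cong \mathit{\Gamma}_{\mathcal{V}_j}\mathbf{1}$.
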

\begin{proof}
Each $\mathcal{V}_i$ is contained in $\mathcal{V}$ so there are corresponding inclusions for $i<j$ 
\begin{displaymath}
\mathit{\Gamma}_{\mathcal{V}_i}\sfT \cie \mathit{\Gamma}_{\mathcal{V}_j}\sfT \cie \mathit{\Gamma}_{\mathcal{V}}\sfT.
\end{displaymath}
These inclusions induce canonical morphisms between the corresponding tensor-idempotents which fit into a commutative triangle
\begin{displaymath}
\xymatrix{
\mathit{\Gamma}_{\mathcal{V}_i}\mathbf{1} \ar[rr] \ar[dr] && \mathit{\Gamma}_{\mathcal{V}}\mathbf{1} \\
& \mathit{\Gamma}_{\mathcal{V}_j}\mathbf{1} \ar[ur]
}
\end{displaymath}
We remind the reader that $\mathit{\Gamma}_{\mathcal{V}_i}\sfT$ denotes the localising tensor-ideal generated by those compact objects with support in $\mathcal{V}_i$ and $\mathit{\Gamma}_{\mathcal{V}_i}\mathbf{1}$ is the associated idempotent for the acyclisation (see the discussion before Definition~\ref{defn_gen_ptfunctors}).

Commutativity of these triangles gives us an induced morphism from the homotopy colimit of the $\mathit{\Gamma}_{\mathcal{V}_i}\mathbf{1}$ to $\mathit{\Gamma}_\mathcal{V}\mathbf{1}$ which we complete to a triangle
\begin{displaymath}
\hocolim_I \mathit{\Gamma}_{\mathcal{V}_i}\mathbf{1} \to \mathit{\Gamma}_\mathcal{V}\mathbf{1} \to Z \to \S \hocolim_I \mathit{\Gamma}_{\mathcal{V}_i}\mathbf{1}.
\end{displaymath}
In order to prove the lemma it is sufficient to show that $Z$ is isomorphic to the zero object in $\sfT$. If this is the case then the first map in the above triangle is the desired isomorphism.

The argument given in \cite{BousfieldBAS} can be extended to prove that localising subcategories are closed under homotopy colimits and so this triangle consists of objects of $\mathit{\Gamma}_\mathcal{V}\sfT$. By definition $\mathit{\Gamma}_\mathcal{V}\sfT$ is the full subcategory of $\sfT$ generated by those objects of $\sfT^c$ whose support is contained in $\mathcal{V}$. Thus $Z\iso 0$ if and only if for each compact object $t$ with $\supp t \cie \mathcal{V}$ we have $\Hom(t,Z) = 0$. We note there is no ambiguity here as by Proposition~\ref{prop_supp_prop} the two notions of support, that of \cite{BaSpec} and \cite{BaRickard}, agree for compact objects. In particular, the support of any compact object is closed.

Recall from Theorem~\ref{thm_spectral} that $\Spc \sfT^c$ is spectral (see Definition~\ref{defn_spectral}), and we have assumed it is also noetherian. Thus the closed subset $\supp t$ can be written as a finite union of irreducible closed subsets. We can certainly find a $j\in I$ so that $\mathcal{V}_j$ contains the generic points of these finitely many irreducible components which implies $\supp t \cie \mathcal{V}_j$ by specialisation closure. Therefore, by adjunction, it is enough to show 
\begin{align*}
\Hom(t, Z) &\iso \Hom(\mathit{\Gamma}_{\mathcal{V}_j} t ,Z) \\
&\iso \Hom(t, \mathit{\Gamma}_{\mathcal{V}_j} Z)
\end{align*}
is zero, as this implies $Z \iso 0$ and we get the claimed isomorphism.

In order to show the claimed hom-set vanishes let us demonstrate that $\mathit{\Gamma}_{\mathcal{V}_j}Z$ is zero. Observe that tensoring the structure morphisms $\mathit{\Gamma}_{\mathcal{V}_{i_1}}\mathbf{1} \to \mathit{\Gamma}_{\mathcal{V}_{i_2}}\mathbf{1}$ for $i_2\geq i_1\geq j$ with $\mathit{\Gamma}_{\mathcal{V}_j}\mathbf{1}$ yields canonical isomorphisms
\begin{displaymath}
\mathit{\Gamma}_{\mathcal{V}_j}\mathbf{1} \iso \mathit{\Gamma}_{\mathcal{V}_j}\mathbf{1}\otimes \mathit{\Gamma}_{\mathcal{V}_{i_1}}\mathbf{1} \stackrel{\sim}{\to} \mathit{\Gamma}_{\mathcal{V}_j}\mathbf{1}\otimes \mathit{\Gamma}_{\mathcal{V}_{i_2}}\mathbf{1} \iso \mathit{\Gamma}_{\mathcal{V}_j}\mathbf{1}.
\end{displaymath}
Thus applying $\mathit{\Gamma}_{\mathcal{V}_j}$ to the sequence $\{\mathit{\Gamma}_{\mathcal{V}_i}\mathbf{1}\}_{i\in I}$ gives a diagram whose homotopy colimit is $\mathit{\Gamma}_{\mathcal{V}_j}\mathbf{1}$. From this we deduce that the first morphism in the resulting triangle
\begin{displaymath}
\mathit{\Gamma}_{\mathcal{V}_j}\mathbf{1} \otimes \hocolim_I \mathit{\Gamma}_{\mathcal{V}_i}\mathbf{1} \to \mathit{\Gamma}_{\mathcal{V}_j}\mathbf{1} \to \mathit{\Gamma}_{\mathcal{V}_j}Z 
\end{displaymath}
is an isomorphism; since $\sfT$ is the homotopy category of a monoidal model category the tensor product commutes with homotopy colimits. This forces $\mathit{\Gamma}_{\mathcal{V}_j}Z \iso 0$ completing the proof.
\end{proof}

\begin{lem}\label{lem_Amnon2.10_uber}
Let $P\cie \Spc \sfT^c$ be given and suppose $A$ is an object of $\K$ such that $\mathit{\Gamma}_x A \iso 0$ for all $x\in (\Spc \sfT^c \setminus P)$. If $\sfT$ has a model then $A$ is an object of the localising subcategory
\begin{displaymath}
\sfL = \loc( \mathit{\Gamma}_y\K \; \vert \; y\in P).
\end{displaymath}
\end{lem}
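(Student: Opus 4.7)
The plan is a Zorn-style induction on specialisation closed subsets of $\Spc\sfT^c$. Let
\begin{displaymath}
\Sigma = \{\mathcal{V} \subseteq \Spc\sfT^c \text{ specialisation closed} \;\vert\; \mathit{\Gamma}_\mathcal{V}\mathbf{1} * A \in \sfL\}
\end{displaymath}
partially ordered by inclusion. The goal is to show $\Spc\sfT^c \in \Sigma$: since $\tau(\Spc\sfT^c) = \sfT^c$ generates $\sfT$ as a localising subcategory, one has $\mathit{\Gamma}_{\Spc\sfT^c}\mathbf{1} \cong \mathbf{1}$, so this membership is equivalent to $A \in \sfL$.

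The base case $\varnothing \in \Sigma$ is clear from $\mathit{\Gamma}_\varnothing\mathbf{1} \cong 0$. For the chain step, let $\{\mathcal{V}_i\}_{i\in I}$ be a chain in $\Sigma$ with union $\mathcal{V}$. By Lemma~\ref{lem_hocolim_ltg}, which uses that $\sfT$ has a model, $\mathit{\Gamma}_\mathcal{V}\mathbf{1} \cong \hocolim_i \mathit{\Gamma}_{\mathcal{V}_i}\mathbf{1}$. Acting on $A$ and using that the functor $(-) * A$ commutes with the relevant homotopy colimits (it preserves coproducts and cones, and one may reduce to well-ordered chains by cofinality), one obtains $\mathit{\Gamma}_\mathcal{V} A \cong \hocolim_i \mathit{\Gamma}_{\mathcal{V}_i} A$, which lies in the localising subcategory $\sfL$ because each term does. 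Zorn's lemma then furnishes a maximal element $\mathcal{V}^* \in \Sigma$.

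Suppose for contradiction that $\mathcal{V}^* \subsetneq \Spc\sfT^c$. Since $\Spc\sfT^c$ is noetherian and $T_0$, we may pick $x$ in the complement such that $\overline{\{x\}}$ is minimal among the closures of points outside $\mathcal{V}^*$ (DCC on closed subsets together with $T_0$ guarantees this); then $\overline{\{x\}} \setminus \{x\} \subseteq \mathcal{V}^*$. Put $\mathcal{W} = \mathcal{V}^* \cup \overline{\{x\}}$, a specialisation closed subset strictly containing $\mathcal{V}^*$ with $\mathcal{W} \setminus \mathcal{V}^* = \{x\}$. Tensoring the acyclisation triangle $\mathit{\Gamma}_{\mathcal{V}^*}\mathbf{1} \to \mathbf{1} \to L_{\mathcal{V}^*}\mathbf{1} \to$ with $\mathit{\Gamma}_\mathcal{W}\mathbf{1}$, using that $\mathit{\Gamma}_{\mathcal{V}^*}\mathbf{1}$ lies in $\mathit{\Gamma}_{\mathcal{V}^*}\sfT \subseteq \mathit{\Gamma}_\mathcal{W}\sfT$ so $\mathit{\Gamma}_\mathcal{W}\mathbf{1} \otimes \mathit{\Gamma}_{\mathcal{V}^*}\mathbf{1} \cong \mathit{\Gamma}_{\mathcal{V}^*}\mathbf{1}$, and that $\mathit{\Gamma}_\mathcal{W}\mathbf{1} \otimes L_{\mathcal{V}^*}\mathbf{1} \cong \mathit{\Gamma}_x\mathbf{1}$ by Remark~\ref{rem_bik6.2_uber}, yields a distinguished triangle
\begin{displaymath}
\mathit{\Gamma}_{\mathcal{V}^*}\mathbf{1} \to \mathit{\Gamma}_\mathcal{W}\mathbf{1} \to \mathit{\Gamma}_x\mathbf{1} \to .
\end{displaymath}
Applying $(-) * A$: the first term lies in $\sfL$ by maximality of $\mathcal{V}^*$, and the third lies in $\sfL$ for either of two reasons. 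If $x \in P$, then $\mathit{\Gamma}_x A \in \mathit{\Gamma}_x \K \subseteq \sfL$ by the definition of $\sfL$; if $x \notin P$, then $\mathit{\Gamma}_x A \cong 0$ by hypothesis. Closure of $\sfL$ under cones forces $\mathit{\Gamma}_\mathcal{W} A \in \sfL$, contradicting maximality of $\mathcal{V}^*$.

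The principal difficulty is arranging that the ``increment'' from $\mathit{\Gamma}_{\mathcal{V}^*}\mathbf{1}$ to $\mathit{\Gamma}_\mathcal{W}\mathbf{1}$ is captured exactly by the single point idempotent $\mathit{\Gamma}_x\mathbf{1}$, so that the hypothesis on $A$ at $x$ actually becomes usable. This is what the combination of noetherianity (to arrange a one-point difference $\mathcal{W} \setminus \mathcal{V}^* = \{x\}$) and Remark~\ref{rem_bik6.2_uber} (to identify the resulting cone as $\mathit{\Gamma}_x\mathbf{1}$) provides. The model-categorical hypothesis only enters to make the chain step of Zorn's lemma go through, via Lemma~\ref{lem_hocolim_ltg}.
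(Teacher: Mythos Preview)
Your proof is correct and follows essentially the same approach as the paper's: a Zorn argument on the poset of specialisation closed $\mathcal{V}$ with $\mathit{\Gamma}_\mathcal{V}A \in \sfL$, using Lemma~\ref{lem_hocolim_ltg} for the chain step and Remark~\ref{rem_bik6.2_uber} to identify the one-point increment with $\mathit{\Gamma}_x\mathbf{1}$ for the successor step. The only cosmetic difference is that the paper writes the key triangle directly in $\K$ as $\mathit{\Gamma}_Y\mathit{\Gamma}_{Y\cup\{z\}}A \to \mathit{\Gamma}_{Y\cup\{z\}}A \to L_Y\mathit{\Gamma}_{Y\cup\{z\}}A$ rather than first producing it in $\sfT$ and then applying $(-)*A$; your phrasing of the chain step via ``$(-)*A$ commutes with homotopy colimits'' is slightly loose (since $\K$ need not have a model), but the intended argument---that $\mathit{\Gamma}_\mathcal{V}\mathbf{1}$ lies in $\loc(\mathit{\Gamma}_{\mathcal{V}_i}\mathbf{1})$ and $(-)*A$ is exact and coproduct preserving---is exactly what the paper uses.
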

\begin{proof}
Denote by $\mathcal{P}(\Spc \sfT^c)$ the power set of $\Spc\sfT^c$ and let $\Lambda \cie \mathcal{P}(\Spc \sfT^c)$ be the set of specialisation closed subsets $\mathcal{W}$ such that $\mathit{\Gamma}_\mathcal{W}A$ is in $\sfL = \loc( \mathit{\Gamma}_y\K \; \vert \; y\in P)$. We first note that $\Lambda$ is not empty. Indeed, as $\sfT^c$ is rigid the only compact objects with empty support are the zero objects by \cite{BaFilt}*{Corollary~2.5} so
\begin{displaymath}
\mathit{\Gamma}_\varnothing\sfT = \loc(t\in \sfT^c \; \vert \; \supp_{(\sfT,\otimes)} t = \varnothing ) = \loc( 0 )
\end{displaymath}
giving $\mathit{\Gamma}_\varnothing A = 0$ and hence $\varnothing \in \Lambda$. 

Since $\sfL$ is localising, Lemma \ref{lem_hocolim_ltg} shows the set $\Lambda$ is closed under taking increasing unions: as mentioned above the argument in \cite{BousfieldBAS} extends to show that localising subcategories are closed under directed homotopy colimits in our situation. Thus $\Lambda$ contains a maximal element $Y$ by Zorn's lemma. We will show that $Y = \Spc \sfT^c$.

Suppose $Y\neq \Spc \sfT^c$. Since $\Spc \sfT^c$ is noetherian the subspace $\Spc\sfT^c \setminus Y$ contains an element $z$ maximal with respect to specialisation. We have 
\begin{displaymath}
L_Y\mathbf{1}\otimes\mathit{\Gamma}_{Y\un \{z\}}\mathbf{1} \iso \mathit{\Gamma}_z\mathbf{1}
\end{displaymath}
as $Y\un \{z\}$ is specialisation closed by maximality of $z$ and Remark~\ref{rem_bik6.2_uber} tells us that we can use any suitable pair of Thomason subsets to define $\mathit{\Gamma}_z\mathbf{1}$. So $L_Y\mathit{\Gamma}_{Y\un \{z\}}A \iso \mathit{\Gamma}_z A$ and by our hypothesis on vanishing either $\mathit{\Gamma}_z \K \cie \sfL$ or \mbox{$\mathit{\Gamma}_z A = 0$}. Consider the triangle
\begin{displaymath}
\xymatrix{
\mathit{\Gamma}_Y\mathit{\Gamma}_{Y\un \{z\}} A \ar[d]_{\wr} \ar[r] & \mathit{\Gamma}_{Y\un \{z\}}A \ar[r] & L_Y\mathit{\Gamma}_{Y\un \{z\}}A \ar[d]^{\wr} \\
\mathit{\Gamma}_Y A & & \mathit{\Gamma}_z A
}
\end{displaymath}
We see that in either case, since $\mathit{\Gamma}_YA$ is in $\sfL$ and $\sfL$ is localising, the object $\mathit{\Gamma}_{Y\cup \{z\}}A$ is in $\sfL$. Thus $Y\un \{z\}\in \Lambda$ which contradicts the maximality of $Y$. Hence $Y = \Spc \sfT^c$ and so $A$ is in $\sfL$.
\end{proof}

\begin{prop}\label{prop_ltg_idempotents}
Suppose $\sfT$ has a model. Then the local-to-global principle holds for the action of $\sfT$ on $\K$. Explicitly, for any $A$ in $\K$ there is an equality of $\sfT$-submodules
\begin{displaymath}
\loc^*( A ) = \loc^*( \mathit{\Gamma}_x A \; \vert \; x\in \supp A ).
\end{displaymath}
\end{prop}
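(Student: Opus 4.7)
The easy containment $\loc^*(\mathit{\Gamma}_x A \mid x\in \supp A) \cie \loc^*(A)$ is immediate: each $\mathit{\Gamma}_x A = \mathit{\Gamma}_x\mathbf{1}*A$ lies in the $\sfT$-submodule generated by $A$. So my plan is to prove the reverse containment by showing $A \in \sfL$, where $\sfL := \loc^*(\mathit{\Gamma}_x A \mid x\in \supp A)$.

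The strategy is to repeat the Zorn's lemma argument used in the proof of Lemma~\ref{lem_Amnon2.10_uber}, but with this specific $\sfL$ in place of the submodule appearing there. Concretely, let
\begin{displaymath}
\Lambda = \{\mathcal{W}\cie \Spc\sfT^c \; \vert \; \mathcal{W} \text{ is specialisation closed and } \mathit{\Gamma}_\mathcal{W}A \in \sfL\}.
\end{displaymath}
First I would observe $\varnothing \in \Lambda$: by \cite{BaFilt}*{Corollary~2.5} rigidity forces $\mathit{\Gamma}_\varnothing\mathbf{1}\cong 0$, so $\mathit{\Gamma}_\varnothing A \cong 0 \in \sfL$. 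Next, since $\sfT$ has a model, Lemma~\ref{lem_hocolim_ltg} together with the fact that $\sfL$ is closed under directed homotopy colimits (the $\sfL$ being localising) shows that $\Lambda$ is closed under directed unions. The noetherianity of $\Spc\sfT^c$ then allows Zorn's lemma to produce a maximal element $Y \in \Lambda$.

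The heart of the argument is ruling out $Y \neq \Spc\sfT^c$. Suppose the contrary and pick $z \in \Spc\sfT^c \setminus Y$ maximal under specialisation, so that $Y\cup\{z\}$ is specialisation closed. By Remark~\ref{rem_bik6.2_uber} we have $L_Y\mathbf{1}\otimes \mathit{\Gamma}_{Y\cup\{z\}}\mathbf{1} \cong \mathit{\Gamma}_z\mathbf{1}$, so acting on $A$ yields the localisation triangle
\begin{displaymath}
\mathit{\Gamma}_Y A \to \mathit{\Gamma}_{Y\cup\{z\}}A \to \mathit{\Gamma}_z A \to \Sigma \mathit{\Gamma}_Y A.
\end{displaymath}
The leftmost term lies in $\sfL$ by maximality of $Y$. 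For the rightmost term there are two cases: if $z \in \supp A$, then $\mathit{\Gamma}_z A \in \sfL$ by the very definition of $\sfL$; if $z\notin \supp A$, then $\mathit{\Gamma}_zA\cong 0\in \sfL$ by definition of the support. Either way, $\mathit{\Gamma}_{Y\cup\{z\}}A \in \sfL$, contradicting maximality of $Y$. Hence $Y = \Spc\sfT^c$, and since $\mathit{\Gamma}_{\Spc\sfT^c}\mathbf{1}\cong \mathbf{1}$ (as $\tau(\Spc\sfT^c) = \sfT^c$ generates $\sfT$), we conclude $A \cong \mathit{\Gamma}_{\Spc\sfT^c}A \in \sfL$, as required.

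The main obstacle is precisely the step just described — checking that the maximal $Y$ cannot be a proper subset — and it is handled by the two-case analysis above, which exploits the defining property of $\sfL$ exactly at points in $\supp A$ while vanishing of $\mathit{\Gamma}_z A$ takes care of points outside. The other potentially delicate point, stability of $\Lambda$ under directed unions, is where the monoidal model hypothesis is genuinely used, via Lemma~\ref{lem_hocolim_ltg}; everything else is formal manipulation of the idempotents $\mathit{\Gamma}_\mathcal{V}\mathbf{1}$, $L_\mathcal{V}\mathbf{1}$ and the associated localisation triangles.
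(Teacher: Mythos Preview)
Your argument is correct, but the route differs from the paper's. The paper does not rerun the Zorn's-lemma argument; instead it applies Lemma~\ref{lem_Amnon2.10_uber} once to the action of $\sfT$ on itself (with $P=\Spc\sfT^c$, so the hypothesis is vacuous) to deduce that $\{\mathit{\Gamma}_x\mathbf{1}\mid x\in\Spc\sfT^c\}$ generates $\sfT$ as a localising tensor-ideal, and then invokes Lemma~\ref{lem_loc_commutes2} to compute $\sfT*\loc(A)$ in two ways: once using this generating set (yielding $\loc^*(\mathit{\Gamma}_xA\mid x\in\supp A)$) and once using $\sfT=\loc^\otimes(\mathbf{1})$ (yielding $\loc^*(A)$). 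Your approach is more self-contained --- it bypasses Lemma~\ref{lem_loc_commutes2} entirely and sharpens Lemma~\ref{lem_Amnon2.10_uber} by replacing the ambient $\loc(\mathit{\Gamma}_y\K\mid y\in P)$ with the smaller $\loc^*(\mathit{\Gamma}_xA\mid x\in\supp A)$, which is exactly what the two-case analysis at the inductive step permits. The paper's approach is more modular: it isolates the transfinite induction in a single lemma about $\sfT$ and then moves to $\K$ by a purely formal manipulation of generators.

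One small remark: noetherianity of $\Spc\sfT^c$ is not what makes Zorn's lemma apply (closure of $\Lambda$ under chains already does that); it is used only later, to guarantee the existence of a specialisation-maximal $z$ in $\Spc\sfT^c\setminus Y$.
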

\begin{proof}
By Lemma \ref{lem_Amnon2.10_uber} applied to the action
\begin{displaymath}
\sfT\times \sfT \stackrel{\otimes}{\to} \sfT
\end{displaymath}
we see $\sfT = \loc( \mathit{\Gamma}_x\sfT\; \vert \; x\in \Spc \sfT^c)$. Since $\mathit{\Gamma}_x\sfT = \loc^\otimes( \mathit{\Gamma}_x\mathbf{1})$ it follows that the set of objects $\{\mathit{\Gamma}_x\mathbf{1} \; \vert \; x\in \Spc \sfT^c \}$ generates $\sfT$ as a localising tensor-ideal. By Lemma \ref{lem_loc_commutes2} given an object $A\in \K$ we get a generating set for  $\sfT*\loc( A)$:
\begin{displaymath}
\sfT* \loc( A ) = \loc^\otimes( \mathit{\Gamma}_x\mathbf{1} \; \vert \; x\in \Spc \sfT) * \loc( A )= \loc^*( \mathit{\Gamma}_x A \; \vert \; x\in \supp A).
\end{displaymath}
But it is also clear that $\sfT = \loc^\otimes( \mathbf{1} )$ so, by Lemma \ref{lem_loc_commutes2} again,
\begin{displaymath}
\sfT* \loc( A ) = \loc^\otimes( \mathbf{1} ) * \loc( A ) = \loc^*( A )
\end{displaymath}
and combining this with the other string of equalities gives
\begin{displaymath}
\loc^*( A ) = \sfT*\loc( A ) = \loc^*( \mathit{\Gamma}_x A \; \vert \; x\in \supp A)
\end{displaymath}
which completes the proof.
\end{proof}

We thus have the following theorem concerning the local-to-global principle for actions of rigidly-compactly generated tensor triangulated categories.

\begin{thm}\label{thm_general_ltg}
Suppose $\sfT$ is a rigidly-compactly generated tensor triangulated category with a model and such that $\Spc \sfT^c$ is noetherian. Then for any compactly generated triangulated category $\K$, for instance $\K = \sfT$, we have:
\begin{itemize}
\item[$(i)$] The local-to-global principle holds for any action of $\sfT$ on $\K$;
\item[$(ii)$] The associated support theory detects vanishing of objects, i.e.\ $A \in \K$ is zero if and only if $\supp X = \varnothing$;
\item[$(iii)$] For any chain $\{\mathcal{V}_i\}_{i\in I}$ of specialisation closed subsets of $\Spc \sfT^c$ with union $\mathcal{V}$ there is an isomorphism
\begin{displaymath}
\mathit{\Gamma}_\mathcal{V}\mathbf{1} \iso \hocolim \mathit{\Gamma}_{\mathcal{V}_i}\mathbf{1}
\end{displaymath}
where the structure maps are the canonical ones.
\end{itemize}
\end{thm}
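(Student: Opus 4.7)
The plan is to observe that the three parts of the theorem are essentially already established in the preceding lemmas and proposition, and then to collate them. I will go in the order (iii), (i), (ii), since each statement builds on the previous one.

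For (iii), this is exactly the content of Lemma~\ref{lem_hocolim_ltg}: given a chain $\{\mathcal{V}_i\}_{i \in I}$ of specialisation closed subsets with union $\mathcal{V}$, the canonical maps $\mathit{\Gamma}_{\mathcal{V}_i}\mathbf{1} \to \mathit{\Gamma}_\mathcal{V}\mathbf{1}$ induce a morphism from the homotopy colimit, and one shows the cone is annihilated by $\mathit{\Gamma}_{\mathcal{V}_j}$ for any $j$ sufficiently large relative to a given compact test object. The presence of a model for $\sfT$ is used to make the tensor product commute with homotopy colimits; noetherianness of $\Spc\sfT^c$ is used to reduce a compact support to finitely many irreducible components whose generic points are captured by some $\mathcal{V}_j$.

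For (i), this is Proposition~\ref{prop_ltg_idempotents}. The argument runs as follows: Lemma~\ref{lem_Amnon2.10_uber} applied to the self-action $\sfT \times \sfT \to \sfT$ with $P = \Spc\sfT^c$ shows $\sfT = \loc(\mathit{\Gamma}_x\sfT \mid x \in \Spc\sfT^c)$, and since each $\mathit{\Gamma}_x \sfT = \loc^\otimes(\mathit{\Gamma}_x\mathbf{1})$ we conclude $\sfT = \loc^\otimes(\mathit{\Gamma}_x\mathbf{1} \mid x \in \Spc\sfT^c)$. Combining this with the compatibility of submodule generation with the action (Lemma~\ref{lem_loc_commutes2}) gives the two descriptions
\begin{displaymath}
\sfT * \loc(A) = \loc^\otimes(\mathbf{1}) * \loc(A) = \loc^*(A)
\end{displaymath}
and
\begin{displaymath}
\sfT * \loc(A) = \loc^\otimes(\mathit{\Gamma}_x\mathbf{1} \mid x) * \loc(A) = \loc^*(\mathit{\Gamma}_x A \mid x \in \supp A),
\end{displaymath}
which together yield the local-to-global principle.

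For (ii), suppose $A \in \K$ satisfies $\supp A = \varnothing$, i.e.\ $\mathit{\Gamma}_x A \cong 0$ for every $x \in \Spc\sfT^c$. Apply Lemma~\ref{lem_Amnon2.10_uber} with $P = \varnothing$: the hypothesis is satisfied vacuously, so $A$ lies in $\loc(\mathit{\Gamma}_y\K \mid y \in \varnothing) = \loc(\varnothing) = 0$, whence $A \cong 0$. The converse direction is immediate from Proposition~\ref{prop_abs_supp_prop}. The main engine underlying everything is Lemma~\ref{lem_Amnon2.10_uber}, whose own proof relies on the model-theoretic fact that localising subcategories of $\sfT$ are closed under directed homotopy colimits (so that one can Zorn's-lemma one's way through an increasing chain of specialisation closed subsets), together with (iii) and noetherianness of the spectrum to pick out a maximal element of its complement. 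Thus the real technical content has already been discharged in Lemmas~\ref{lem_hocolim_ltg} and~\ref{lem_Amnon2.10_uber}, and the theorem as stated is simply the consolidation of these into a single packaged statement.
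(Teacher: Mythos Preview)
Your proof is correct and follows essentially the same route as the paper: (iii) is Lemma~\ref{lem_hocolim_ltg} and (i) is Proposition~\ref{prop_ltg_idempotents}. The only difference is in (ii): the paper deduces it from (i) by observing that $\supp A = \varnothing$ gives $\loc^*(A) = \loc^*(\mathit{\Gamma}_x A \mid x \in \Spc\sfT^c) = \loc^*(0)$, whereas you bypass (i) and invoke Lemma~\ref{lem_Amnon2.10_uber} directly with $P = \varnothing$. Both arguments are valid and rest on the same underlying machinery; the paper's phrasing has the minor advantage of exhibiting the implication (i)$\Rightarrow$(ii) explicitly. One small quibble: when $P = \varnothing$ the hypothesis of Lemma~\ref{lem_Amnon2.10_uber} is not satisfied \emph{vacuously} but rather by the assumption $\supp A = \varnothing$, since $\Spc\sfT^c \setminus P = \Spc\sfT^c$.
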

\begin{proof}
That (iii) always holds is the content of Lemma \ref{lem_hocolim_ltg} and we have proved in Proposition \ref{prop_ltg_idempotents} that (i) holds.
To see (i) implies (ii), for an action $\sfT *\K \to \K$, observe that if $\supp X = \varnothing$ for an object $X$ of $\K$ then the local-to-global principle yields
\begin{displaymath}
\loc^*( X ) = \loc^*( \mathit{\Gamma}_x X \; \vert \; x\in \Spc\sfT^c ) = \loc^*( 0 )
\end{displaymath}
so $X\iso 0$.
\end{proof}

\begin{aside}\label{aside_ltg}
There are other situations in which the local-to-global principle is known to hold\textemdash{}one is not forced to only consider $\sfT$ whose compacts have noetherian spectrum. A general criterion for verifying the local-to-global principle is given in \cite{StevensonLTG} and it is shown that there are other settings in which one has access to this tool. However, the local-to-global principle does not come for free. In \cite{StevensonAbsFlat} examples are given where the local-to-global principle fails.
\end{aside}

\subsection{An example: commutative noetherian rings}\label{sec_Neeman_ex}
As an example we sketch a proof of Neeman's classification, as proved in \cite{NeeChro}, of the localising subcategories of $\D(R)$ for a commutative noetherian ring $R$. Our goal is to illustrate how one applies the machinery thusfar developed in a concrete setting. The details of the argument are not so different from those in Amnon's paper, although the abstract machinery we have developed takes care of many of the technical points. 

Let us begin by recalling the theorem.

\begin{thm}[\cite{NeeChro}*{Theorem~2.8}]
Let $R$ be a commutative noetherian ring. There is an isomorphism of lattices
\begin{displaymath}
\left\{ \begin{array}{c}
\text{subsets of}\; \Spec R 
\end{array} \right\}
\xymatrix{ \ar[r]<1ex>^\t \ar@{<-}[r]<-1ex>_\s &} 
\Loc \D(R)
\end{displaymath}
\end{thm}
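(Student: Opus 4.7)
The plan is to apply the abstract machinery of Section~3 to the self-action of $\sfT = \D(R)$. First, $\D(R)$ is a rigidly-compactly generated tensor triangulated category with compact objects $\D^\mathrm{perf}(R)$, and by Example~\ref{ex_ringspec} we have $\Spc \sfT^c \cong \Spec R$, which is noetherian by hypothesis on $R$. The unit $R$ is a compact generator, so Lemma~\ref{lem_big_unit_gen} identifies $\Loc \D(R)$ with the lattice of localising tensor-ideals (i.e.\ submodules under the self-action). Finally $\D(R)$ has a model---the standard monoidal model structure on chain complexes---so Theorem~\ref{thm_general_ltg} applies: the local-to-global principle holds and supports detect vanishing of objects.

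Next I would define $\t$ and $\s$ as in Definition~\ref{defn_vis_sigmatau} and show they are mutually inverse. For $\s\t = \id$, Proposition~\ref{general_tau_inj} reduces matters to checking $\s\D(R) = \Spec R$, i.e.\ that $\mathit{\Gamma}_\mathfrak{p} R \neq 0$ for every $\mathfrak{p} \in \Spec R$. This follows from the explicit identification $\mathit{\Gamma}_\mathfrak{p} R \cong K_\infty(\mathfrak{p}) \otimes R_\mathfrak{p}$ in Example~\ref{ex_ring_ex}: over the noetherian local ring $R_\mathfrak{p}$ the stable Koszul complex computes local cohomology, and the top local cohomology module is visibly non-zero (it contains a copy of the residue field $k(\mathfrak{p})$).

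For $\t\s = \id$, let $\sfL$ be a localising subcategory and take $A \in \t\s(\sfL)$, so $\supp A \cie \supp \sfL$. The local-to-global principle gives $A \in \loc(\mathit{\Gamma}_\mathfrak{p} A \; \vert \; \mathfrak{p} \in \supp A)$, so it is enough to show $\mathit{\Gamma}_\mathfrak{p} A \in \sfL$ for each $\mathfrak{p} \in \supp A$. For such a $\mathfrak{p}$, choose $X \in \sfL$ with $\mathfrak{p} \in \supp X$; by tensor-ideality of $\sfL$ the non-zero object $\mathit{\Gamma}_\mathfrak{p} X$ lies in $\sfL$. The whole argument now rests on the \emph{minimality claim}: for each $\mathfrak{p}$ and each non-zero $Y \in \mathit{\Gamma}_\mathfrak{p} \D(R)$, one has $\loc(Y) = \mathit{\Gamma}_\mathfrak{p} \D(R)$. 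Granting this, $\mathit{\Gamma}_\mathfrak{p} \D(R) = \loc(\mathit{\Gamma}_\mathfrak{p} X) \cie \sfL$, and hence $\mathit{\Gamma}_\mathfrak{p} A \in \sfL$ as required.

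The hard part is the minimality claim, which is the technical heart of Neeman's paper. Using the equivalence $\D^\mathrm{perf}(R)/\tau(\mathcal{Z}(\mathfrak{p})) \cong \D^\mathrm{perf}(R_\mathfrak{p})$ from Example~\ref{ex_ring_ex} (and passing to localising closures, which identifies $\mathit{\Gamma}_\mathfrak{p} \D(R)$ with $\mathit{\Gamma}_\mathfrak{m} \D(R_\mathfrak{p})$), one reduces to the case of a noetherian local ring $(R, \mathfrak{m})$ with residue field $k$: every non-zero $Y \in \mathit{\Gamma}_\mathfrak{m} \D(R)$ satisfies $\loc(Y) = \mathit{\Gamma}_\mathfrak{m} \D(R)$. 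This is proved in two steps. First, $\mathit{\Gamma}_\mathfrak{m} \D(R)$ is generated by the Koszul complex on a generating sequence for $\mathfrak{m}$, and this Koszul complex has finite length cohomology, which by induction on length lies in $\loc(k)$, yielding $\mathit{\Gamma}_\mathfrak{m} \D(R) = \loc(k)$. Second, for any non-zero $Y \in \mathit{\Gamma}_\mathfrak{m} \D(R)$, a Nakayama/Matlis duality argument produces a non-zero map either from some $\S^{-n} k \to Y$ or dually from $Y \to \S^n E(k)$ (with $E(k)$ the injective hull of $k$, which itself lies in $\loc(k)$ as a filtered colimit of finite length modules); bootstrapping this via iterated cones, suspensions, and coproducts forces $k \in \loc(Y)$, and combining with the first step completes the proof.
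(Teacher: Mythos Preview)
Your overall strategy matches the paper's exactly: invoke the local-to-global principle for the self-action of $\D(R)$, use Lemma~\ref{lem_big_unit_gen} to identify localising subcategories with submodules, check $\s\D(R)=\Spec R$ via the explicit description of $\mathit{\Gamma}_\mathfrak{p}R$, and reduce $\t\s=\id$ to minimality of each $\mathit{\Gamma}_\mathfrak{p}\D(R)$. Your first step toward minimality---$\mathit{\Gamma}_\mathfrak{p}\D(R)=\loc(k(\mathfrak{p}))$ via Koszul complexes with finite-length cohomology---is also the paper's Lemma~\ref{lem_kpyo}.

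The difference is in your second step. The paper does \emph{not} build maps and bootstrap; instead it exploits the tensor structure directly. Since every localising subcategory of $\D(R)$ is a tensor-ideal, for any nonzero $Y\in\mathit{\Gamma}_\mathfrak{p}\D(R)$ the object $k(\mathfrak{p})\otimes Y$ lies in $\loc(Y)$. Factoring $k(\mathfrak{p})\otimes-$ through $\D(k(\mathfrak{p}))$ shows $k(\mathfrak{p})\otimes Y$ is a direct sum of suspensions of $k(\mathfrak{p})$; one then argues it is nonzero (a derived Nakayama statement over $R_\mathfrak{p}$), whence $k(\mathfrak{p})\in\loc(Y)$ immediately. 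This is both shorter and more robust than your proposed route. Your sketch---produce a nonzero map $\S^{-n}k\to Y$ or $Y\to\S^nE(k)$ and ``bootstrap''---is underspecified: a single nonzero map does not by itself force $k\in\loc(Y)$, and the iterated-cone argument you allude to would need a careful inductive construction (this is closer to what Neeman actually does, and it is genuinely more work). So your proof is correct in outline, but for the minimality step the tensor-with-$k(\mathfrak{p})$ argument is the cleaner and more natural one given the machinery already in place.
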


As we have claimed in Example~\ref{ex_ringspec} there is a homeomorphism $\Spc \D^\mathrm{perf}(R) \cong \Spec R$ and we regard this as an identification. Let us now recall what the various relevant tensor-idempotents are in this situation. We introduced, in Example~\ref{ex_ring_ex}, the stable Koszul complex 
\begin{displaymath}
K_\infty(f) = \cdots \to 0 \to R \to R_f \to 0 \to \cdots
\end{displaymath}
and the analogue for any ideal $I$ in $R$. In this example we also noted that there are isomorphisms in $\D(R)$ for an ideal $I$ and a prime ideal $\mathfrak{p}$
\begin{itemize}
\item[(1)] $\mathit{\Gamma}_{\mathcal{V}(I)}R \iso K_{\infty}(I)$;
\item[(2)] $L_{\mathcal{V}(I)}R \iso \check{C}(I)$;
\item[(3)] $L_{\mathcal{Z}(\mathfrak{p})}R \iso R_\mathfrak{p}$.
\end{itemize}
In particular, $\mathit{\Gamma}_\mathfrak{p}R \cong K_\infty(\mathfrak{p})_\mathfrak{p}$ for each prime ideal $\mathfrak{p}\in \Spec R$.

The unbounded derived category $\D(R)$ is certainly rigidly-compactly generated and we have assumed $R$ is noetherian so Theorem~\ref{thm_general_ltg} guarantees for us that the local-to-global principle holds. Thus, considering the assignments
\begin{displaymath}
\left\{ \begin{array}{c}
\text{subsets of}\; \Spec R 
\end{array} \right\}
\xymatrix{ \ar[r]<1ex>^\t \ar@{<-}[r]<-1ex>_\s &} 
\Loc\D(R)
\end{displaymath}
given by
\begin{displaymath}
\s(\sfL) = \supp \sfL = \{\mathfrak{p} \in \Spec R \; \vert \; \mathit{\Gamma}_\mathfrak{p}\sfL \neq 0\}
\end{displaymath}
for a localising subcategory $\sfL$, and
\begin{displaymath}
\t(W) = \{A \in \D(R) \; \vert \; \supp A \cie W\}
\end{displaymath}
for a subset $W$ of $\Spec R$, we know by Proposition~\ref{general_tau_inj} that $\tau$ is a split monomorphism when restricted to $\s\D(R)$. One sees easily, using the explicit description, that none of the $\mathit{\Gamma}_\mathfrak{p}R$ are zero and so, in fact, $\tau$ is just injective.

In order to show $\tau$ is an isomorphism it is enough to check that each $\mathit{\Gamma}_\mathfrak{p}\D(R)$ is \emph{minimal} in the sense that it has no proper non-trivial localising subcategories. Minimality of $\mathit{\Gamma}_\mathfrak{p}\D(R)$ is equivalent to the statement that for any non-zero $A\in \mathit{\Gamma}_\mathfrak{p}\D(R)$ we have
\begin{displaymath}
\loc(A) = \mathit{\Gamma}_\mathfrak{p}\D(R).
\end{displaymath}
Suppose this is the case. Then for $\sfL\in \Loc\D(R)$ we clearly have a containment
\begin{displaymath}
\sfL \subseteq \t\s\sfL.
\end{displaymath}
On the other hand, if $A \in \t\s\sfL$ then we know $\supp A \subseteq \s\sfL$ and so by the local-to-global principle
\begin{displaymath}
A\in \loc(A) = \loc(\mathit{\Gamma}_\mathfrak{p}A \; \vert \; \mathfrak{p}\in \s\sfL).
\end{displaymath}
If $\mathfrak{p}\in \s\sfL$ then $\mathit{\Gamma}_\mathfrak{p}\sfL \neq 0$ and so, by our minimality assumption, $\mathit{\Gamma}_\mathfrak{p}\D(R) \subseteq \sfL$. This shows $A \in \sfL$ proving $\sfL = \t\s\sfL$.

We now sketch the proof that the $\mathit{\Gamma}_\mathfrak{p}\D(R)$ are minimal. For each prime ideal $\mathfrak{p}$ we will denote by $k(\mathfrak{p}) = R_\mathfrak{p}/\mathfrak{p}R_\mathfrak{p}$ the associated residue field.

\begin{lem}\label{lem_kpyo}
There are equalities of localising subcategories
\begin{displaymath}
\mathit{\Gamma}_\mathfrak{p}\D(R) = \loc(\mathit{\Gamma}_\mathfrak{p}R) = \loc(k(\mathfrak{p})).
\end{displaymath}
\end{lem}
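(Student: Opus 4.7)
The plan is to split the joint equality into the two separate statements $\mathit{\Gamma}_\mathfrak{p}\D(R) = \loc(\mathit{\Gamma}_\mathfrak{p}R)$ and $\loc(\mathit{\Gamma}_\mathfrak{p}R) = \loc(k(\mathfrak{p}))$. The first one is a formal consequence of the set-up: by definition $\mathit{\Gamma}_\mathfrak{p}\D(R)$ is the essential image of $\mathit{\Gamma}_\mathfrak{p}R \otimes (-)$, which coincides with $\loc^{\otimes}(\mathit{\Gamma}_\mathfrak{p}R)$; since $R = \mathbf{1}$ is a compact generator of $\D(R)$, Lemma \ref{lem_big_unit_gen} collapses $\loc^{\otimes}$ and $\loc$, giving the first equality at once.

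For the inclusion $\loc(k(\mathfrak{p})) \cie \loc(\mathit{\Gamma}_\mathfrak{p}R) = \mathit{\Gamma}_\mathfrak{p}\D(R)$ I would first verify that $\supp k(\mathfrak{p}) = \{\mathfrak{p}\}$, which one reads off from Example \ref{ex_ring_ex}: if $\mathfrak{q}$ does not contain $\mathfrak{p}$ then $k(\mathfrak{p})_\mathfrak{q} = 0$, and if $\mathfrak{q}$ strictly contains $\mathfrak{p}$ then some element of $\mathfrak{q}\setminus\mathfrak{p}$ acts invertibly on $k(\mathfrak{p})$, annihilating $K_\infty(\mathfrak{q})\otimes k(\mathfrak{p})$. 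Granted this, Proposition \ref{prop_supp_prop}(f) shows that $L_{\mathcal{V}(\mathfrak{p})}k(\mathfrak{p})$ and $\mathit{\Gamma}_{\mathcal{Z}(\mathfrak{p})}k(\mathfrak{p})$ both have empty support, hence vanish by Theorem \ref{thm_general_ltg}(ii). Feeding this into the two localisation triangles defining those objects yields $k(\mathfrak{p})\cong \mathit{\Gamma}_{\mathcal{V}(\mathfrak{p})}\mathbf{1}\otimes L_{\mathcal{Z}(\mathfrak{p})}\mathbf{1}\otimes k(\mathfrak{p}) = \mathit{\Gamma}_\mathfrak{p}k(\mathfrak{p})$, so $k(\mathfrak{p}) \in \mathit{\Gamma}_\mathfrak{p}\D(R)$, as required.

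For the reverse inclusion it suffices, by the first equality, to show $\mathit{\Gamma}_\mathfrak{p}R \cong K_\infty(\mathfrak{p})_\mathfrak{p} \in \loc(k(\mathfrak{p}))$. I would pick generators $\mathfrak{p} = (x_1, \ldots, x_r)$ and realise $K_\infty(\mathfrak{p})$ as a homotopy colimit over $n$ of ordinary perfect Koszul complexes $K(x_1^n, \ldots, x_r^n)$ (up to suitable shifts). After localising at $\mathfrak{p}$, each such perfect Koszul complex becomes a bounded complex of free $R_\mathfrak{p}$-modules whose cohomology groups are finitely generated $R_\mathfrak{p}$-modules annihilated by a power of $\mathfrak{p}R_\mathfrak{p}$, and any such module carries a finite filtration by copies of $k(\mathfrak{p})$. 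Induction on the amplitude of $K(x_1^n, \ldots, x_r^n)_\mathfrak{p}$ via the canonical truncation triangles therefore puts it in $\loc(k(\mathfrak{p}))$, and since $\loc(k(\mathfrak{p}))$ is closed under homotopy colimits we conclude $K_\infty(\mathfrak{p})_\mathfrak{p} \in \loc(k(\mathfrak{p}))$.

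The main obstacle is this last step: pinning down the explicit homotopy-colimit presentation of $K_\infty(\mathfrak{p})_\mathfrak{p}$ and then checking that the finite-length cohomology modules of the truncations are genuinely reached from $k(\mathfrak{p})$ by iterated extensions. Everything else is furnished by the machinery already in place, namely the vanishing detection from Theorem \ref{thm_general_ltg}, the explicit formulas for $\mathit{\Gamma}_\mathfrak{p}\mathbf{1}$ in Example \ref{ex_ring_ex}, and the support calculus of Proposition \ref{prop_supp_prop}.
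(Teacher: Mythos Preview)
Your proposal is correct and follows essentially the same route as the paper: the first equality via Lemma~\ref{lem_big_unit_gen}, the inclusion $k(\mathfrak{p})\in\mathit{\Gamma}_\mathfrak{p}\D(R)$ by showing $\mathit{\Gamma}_\mathfrak{p}k(\mathfrak{p})\cong k(\mathfrak{p})$, and the reverse inclusion by writing $K_\infty(\mathfrak{p})_\mathfrak{p}$ as a homotopy colimit of finite Koszul complexes with finite-length cohomology over $R_\mathfrak{p}$. The only difference is that the paper obtains $\mathit{\Gamma}_\mathfrak{p}k(\mathfrak{p})\cong k(\mathfrak{p})$ by the one-line direct computation $\mathit{\Gamma}_\mathfrak{p}R\otimes k(\mathfrak{p})\cong K_\infty(\mathfrak{p})_\mathfrak{p}\otimes k(\mathfrak{p})\cong k(\mathfrak{p})$, whereas you take the slightly longer detour through $\supp k(\mathfrak{p})=\{\mathfrak{p}\}$ and the vanishing criterion of Theorem~\ref{thm_general_ltg}(ii).
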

\begin{proof}
The first equality follows from the fact that, by Lemma~\ref{lem_big_unit_gen}, every localising subcategory of $\D(R)$ is a tensor-ideal. In order to prove the second equality, first note that
\begin{displaymath}
\mathit{\Gamma}_\mathfrak{p}R \otimes k(\mathfrak{p}) \cong k(\mathfrak{p})
\end{displaymath}
and so $k(\mathfrak{p}) \in \mathit{\Gamma}_\mathfrak{p}\D(R)$. To prove the reverse containment, we begin by noting that $K(\mathfrak{p}^i)_\mathfrak{p}$, the usual Koszul complex for $\mathfrak{p}^i$ localised at $\mathfrak{p}$ is contained in $\thick(k(\mathfrak{p}))$. This is a straightforward consequence of the fact that over $R_\mathfrak{p}$ the complex $K(\mathfrak{p}^i)_\mathfrak{p}$ has finite length homology. One can write the localised stable Koszul complex $K_\infty(\mathfrak{p})_\mathfrak{p}$ as a homotopy colimit of the $K(\mathfrak{p}^i)_\mathfrak{p}$ and so $K_\infty(\mathfrak{p})_\mathfrak{p} \in \loc(k(\mathfrak{p}))$. This completes the proof of the second equality. 
\end{proof}

\begin{rem}
Using this lemma one can check that the support we have defined for $\D(R)$ agrees with the support of Neeman which is defined in terms of the residue fields.
\end{rem}

\begin{lem}
The localising subcategory $\mathit{\Gamma}_\mathfrak{p}\D(R)$ is minimal.
\end{lem}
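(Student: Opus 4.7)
The plan is to prove that every non-zero object $A \in \mathit{\Gamma}_\mathfrak{p}\D(R)$ satisfies $k(\mathfrak{p}) \in \loc(A)$; combined with Lemma~\ref{lem_kpyo} this forces $\loc(A) = \loc(k(\mathfrak{p})) = \mathit{\Gamma}_\mathfrak{p}\D(R)$, which is precisely the required minimality.

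First I would observe that $\loc(A)$ is a localising tensor-ideal by Lemma~\ref{lem_big_unit_gen}, since $\D(R) = \loc(R) = \loc(\mathbf{1})$. In particular the object $A\otimes_R^{\L} k(\mathfrak{p})$ lies in $\loc(A)$, and, because localising subcategories are thick, it suffices to exhibit $k(\mathfrak{p})$ as a direct summand of $A\otimes_R^{\L} k(\mathfrak{p})$.

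Next I would establish that $A\otimes_R^{\L} k(\mathfrak{p}) \not\iso 0$ via the tensor-annihilator trick. Set
\[\sfL = \{X \in \D(R) \mid X\otimes_R^{\L} A \iso 0\},\]
which is readily checked to be a localising tensor-ideal. If $A\otimes_R^{\L} k(\mathfrak{p}) \iso 0$ then $k(\mathfrak{p}) \in \sfL$, and Lemma~\ref{lem_kpyo} upgrades this to $\loc(k(\mathfrak{p})) = \mathit{\Gamma}_\mathfrak{p}\D(R) \cie \sfL$. In particular $\mathit{\Gamma}_\mathfrak{p}R \in \sfL$, and since $A \in \mathit{\Gamma}_\mathfrak{p}\D(R)$ satisfies the tensor-idempotent identity $A \iso \mathit{\Gamma}_\mathfrak{p}R \otimes_R^{\L} A$, this forces $A \iso 0$, contradicting the hypothesis on $A$.

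Then I would analyse the structure of $A\otimes_R^{\L} k(\mathfrak{p})$. Computed via a projective resolution of $A$, this derived tensor product is naturally represented by a complex of $k(\mathfrak{p})$-vector spaces, and hence lies in the essential image of the restriction functor $\D(k(\mathfrak{p})) \to \D(R)$. As $k(\mathfrak{p})$ is a field, every object of $\D(k(\mathfrak{p}))$ is quasi-isomorphic to the direct sum of its shifted homology, giving
\[A\otimes_R^{\L} k(\mathfrak{p}) \iso \bigoplus_n \S^{-n}\Tor_n^R(A, k(\mathfrak{p}))\]
in $\D(R)$. The previous step guarantees some $\Tor_n^R(A, k(\mathfrak{p}))$ is a non-zero $k(\mathfrak{p})$-vector space and hence splits off a copy of $k(\mathfrak{p})$; thus $k(\mathfrak{p})$ is a direct summand of $A\otimes_R^{\L} k(\mathfrak{p}) \in \loc(A)$, so $k(\mathfrak{p}) \in \loc(A)$, completing the argument.

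The main obstacle is the non-vanishing of $A\otimes_R^{\L} k(\mathfrak{p})$, which the naive attempt would handle through a Nakayama-style manipulation on the $\mathfrak{m}$-torsion homology of $A$; the slick resolution I sketched sidesteps this by using the identification $\mathit{\Gamma}_\mathfrak{p}\D(R) = \loc(k(\mathfrak{p}))$ from Lemma~\ref{lem_kpyo} to propagate vanishing against $k(\mathfrak{p})$ into vanishing against $\mathit{\Gamma}_\mathfrak{p}R$, which $A$ absorbs idempotently.
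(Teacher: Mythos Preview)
Your argument is correct and follows the same strategy as the paper: show $k(\mathfrak{p})\otimes^{\L}_R A$ is a nonzero direct sum of shifted copies of $k(\mathfrak{p})$ by factoring through $\D(k(\mathfrak{p}))$, conclude $k(\mathfrak{p})\in\loc(A)$, and finish via Lemma~\ref{lem_kpyo}. The paper leaves the non-vanishing of $k(\mathfrak{p})\otimes^{\L}_R A$ as an unproved assertion, whereas your tensor-annihilator argument (propagating $k(\mathfrak{p})\in\sfL$ to $\mathit{\Gamma}_\mathfrak{p}R\in\sfL$ via Lemma~\ref{lem_kpyo} and then absorbing idempotently) is a clean way to fill that gap.
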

\begin{proof}
If $A\in \D(R)$ then one can check, by factoring the functor $k(\mathfrak{p})\otimes-$ through $\D(k(\mathfrak{p}))$, that 
\begin{displaymath}
k(\mathfrak{p})\otimes A \cong \bigoplus_{i\in \ZZ}\limits \Sigma^i k(\mathfrak{p})^{(\lambda_i)}
\end{displaymath}
for some set of cardinals $\lambda_i$, where $k(\mathfrak{p})^{(\lambda_i)}$ denotes the coproduct of $\lambda_i$ copies of $k(\mathfrak{p})$. One then proceeds by arguing that if $A\neq 0$ lies in $\mathit{\Gamma}_\mathfrak{p}\D(R)$ we must have $k(\mathfrak{p})\otimes A\neq 0$ and so $\loc(A)$ contains $k(\mathfrak{p})$. Hence
\begin{displaymath}
\loc(k(\mathfrak{p})) \subseteq \loc(A) \subseteq \mathit{\Gamma}_\mathfrak{p}\D(R),
\end{displaymath}
which shows $\loc(A) = \mathit{\Gamma}_\mathfrak{p}\D(R)$.
\end{proof}

\section{Further applications and examples}

In this final section we concentrate on giving some further examples to illustrate how one can use the local-to-global principle in practice.

\subsection{Singularity categories of hypersurfaces}

Here we outline the solution to the classification problem for localising subcategories of singularity categories of hypersurface rings. The intention is to indicate the manner in which these computations usually proceed. In particular, we further highlight the manner in which the local-to-global principle is used. We begin by defining the main players and explaining the action we will use. Further details, including the analogous results in the non-affine case, can be found in \cite{Stevensonclass}.

We can, for the time being, work quite generally. Suppose $R$ is an arbitrary commutative noetherian ring. Then one defines a category
\begin{displaymath}
\D_{\mathrm{Sg}}(R) := \D^\mathrm{b}(\modu R) / \D^{\mathrm{perf}}(R),
\end{displaymath}
where $\D^\mathrm{b}(\modu R)$ is the bounded derived category of finitely generated $R$-modules and $\D^\mathrm{perf}(R)$ is the full subcategory of complexes locally isomorphic to bounded complexes of finitely generated projectives. This category measures the singularities of $R$. In particular, $\D_{\mathrm{Sg}}(R)$ vanishes if and only if $R$ is regular, it is related to other measures of the singularities of $R$ for example maximal Cohen-Macaulay modules (see \cite{Buchweitzunpub}), and its properties reflect the severity of the singularities of $R$. The particular category which will concern us is the stable derived category of Krause \cite{KrStab}, namely
\begin{displaymath}
\sfS(R) := \K_\mathrm{ac}(\Inj R)
\end{displaymath}
the homotopy category of acyclic complexes of injective $R$-modules. We slightly abuse standard terminology by calling $\sfS(R)$ the \emph{singularity category} of $R$. We will also need to consider the following categories
\begin{displaymath}
\D(R) := \D(\Modu R), \quad \text{and} \quad \K(R) := K(\Modu R)
\end{displaymath}
the unbounded derived category of $R$ and the homotopy category of $R$-modules. As indicated above we denote by $\Inj R$ the category of injective $R$-modules and we write $\Flat R$ for the category of flat $R$-modules.

The main facts about $\sfS(R)$ that we will need are summarised in the following theorem of Krause.

\begin{thm}[\cite{KrStab} Theorem 1.1]\label{thm_recol}
Let $R$ be a commutative noetherian ring.
\begin{itemize}
\item[$(1)$] There is a recollement 
\begin{displaymath}
\xymatrix{
\sfS(R) \ar[rr]^{I} && \K(\Inj R) \ar[rr]^{Q} \ar@/^1pc/[ll]^{I_\r} \ar@/_1pc/[ll]_{I_{\l}} &&  \D(R) \ar@/^1pc/[ll]^{Q_\r} \ar@/_1pc/[ll]_{Q_{\l}}
}
\end{displaymath}
where each functor is right adjoint to the one above it. 
\item[$(2)$] The triangulated category $\K(\Inj R)$ is compactly generated, and $Q$ induces an equivalence
\begin{displaymath}
\K(\Inj R)^{\mathrm{c}} \to \D^b(\modu R).
\end{displaymath}
\item[$(3)$] The sequence
\begin{displaymath}
\xymatrix{
\D(R) \ar[r]^(0.5){Q_\l} & \K(\Inj R) \ar[r]^(0.5){I_{\l}} & \sfS(R)
}
\end{displaymath}
is a localisation sequence. Therefore $\sfS(R)$ is compactly generated, and the composite $I_\l \circ Q_\r$ induces (up to direct factors) an equivalence
\begin{displaymath}
\D_{\mathrm{Sg}}(R) \to \sfS(R)^{\mathrm{c}}.
\end{displaymath}
\end{itemize}
\end{thm}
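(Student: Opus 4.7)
The plan is to follow Krause's original strategy from \cite{KrStab}, organising the argument around compact generation of $\K(\Inj R)$ and the resulting Brown representability, from which the remaining structure cascades.

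First I would construct a functor $\D^\mathrm{b}(\modu R) \to \K(\Inj R)$ by sending a bounded complex of finitely generated modules to an injective resolution $iM$; the standard dg/injective resolution machinery (using that $R$ is noetherian, so arbitrary direct sums of injectives remain injective) makes this well-defined and fully faithful. The key technical step, which I expect to be the main obstacle, is to show that for $M \in \D^\mathrm{b}(\modu R)$ the resolution $iM$ is compact in $\K(\Inj R)$. This requires verifying that $\Hom_{\K(R)}(iM, -)$ commutes with coproducts of complexes of injectives. Using the noetherian hypothesis (so coproducts of injectives are injective), together with a degree-by-degree analysis of cochain maps out of a bounded-below complex whose terms are injective hulls of finitely generated modules, one extracts the desired preservation of coproducts. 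This gives a fully faithful embedding $\D^\mathrm{b}(\modu R) \hookrightarrow \K(\Inj R)^c$ and shows that its image generates $\K(\Inj R)$ (any $X \in \K(\Inj R)$ that is right-orthogonal to every injective resolution of every finitely generated module must have trivial cohomology in every degree and hence be zero in $\K(\Inj R)$ using contractibility of acyclic bounded below complexes of injectives — but one must argue carefully here, and this is the content of \cite{KrStab}*{\S 2}).

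Next, from compact generation one invokes Brown representability (Theorem~\ref{thm_brown}) to produce the right adjoints to $I$ and to $Q$. The left adjoint $Q_\l$ is constructed by hand: for $X \in \D(R)$ choose a K-injective resolution $X \to iX$ in the sense of Spaltenstein; the assignment $X \mapsto iX$ descends to a functor $\D(R) \to \K(\Inj R)$ left adjoint to $Q$, and it is fully faithful since $Q\, Q_\l \simeq \Ident$. The kernel of $Q$ is by definition $\K_{\mathrm{ac}}(\Inj R) = \sfS(R)$, giving the localisation sequence
\begin{displaymath}
\xymatrix{
\D(R) \ar[r]^{Q_\l} & \K(\Inj R) \ar[r]^{I_\l} & \sfS(R).
}
\end{displaymath}
Combining this with the dual localisation sequence (for $I$ and $Q$) assembles the recollement diagram in part (1). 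The equivalence $Q\colon \K(\Inj R)^c \stackrel{\sim}{\to} \D^\mathrm{b}(\modu R)$ in part (2) is immediate from the fully faithful embedding constructed above together with the characterisation of the compacts, noting that $Q$ is just the identity on (an injective resolution of) a bounded complex of finitely generated modules.

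Finally, part (3) follows by applying Thomason's localisation theorem \cite{NeeGrot}*{Theorem~2.1} to the localisation sequence produced above. The compact generators of $\D(R)$ are (summands of) the perfect complex $R$, which is sent by $Q_\l$ to an object identified via $Q$ with $R \in \D^\mathrm{b}(\modu R)$ — that is, exactly the subcategory $\D^\mathrm{perf}(R) \subseteq \D^\mathrm{b}(\modu R) \simeq \K(\Inj R)^c$. Thomason localisation then gives that $\sfS(R)$ is compactly generated with
\begin{displaymath}
\sfS(R)^c \simeq \bigl(\K(\Inj R)^c / \D(R)^c\bigr)^{\natural} \simeq \bigl(\D^\mathrm{b}(\modu R)/\D^\mathrm{perf}(R)\bigr)^{\natural} = \D_{\mathrm{Sg}}(R)^{\natural},
\end{displaymath}
where $(-)^\natural$ denotes idempotent completion, which matches the ``up to direct factors'' clause in the statement. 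The functor realising this is the composite $I_\l \circ Q_\r$, as described. The entire argument really rests on the compact generation step, which is where the noetherian hypothesis enters essentially; the rest is a formal but careful unpacking of adjunctions and localisation theory.
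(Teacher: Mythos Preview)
The paper does not prove this theorem; it is quoted as \cite{KrStab}*{Theorem~1.1} and used as a black box, so there is no proof in the paper to compare against. Your outline is a reasonable summary of Krause's own argument, and the overall architecture\textemdash{}establish compact generation of $\K(\Inj R)$, obtain the adjoints via Brown representability and K-injective resolutions, then apply Thomason--Neeman localisation\textemdash{}is correct.

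There is, however, a genuine gap in your generation step. You argue that a complex $Y\in \K(\Inj R)$ right-orthogonal to every $iM$ must be acyclic, and then invoke ``contractibility of acyclic bounded below complexes of injectives'' to conclude $Y\cong 0$. But $Y$ has no reason to be bounded below, and an unbounded acyclic complex of injectives is exactly the kind of object that can survive in $\K(\Inj R)$\textemdash{}this is precisely what $\sfS(R)=\K_{\mathrm{ac}}(\Inj R)$ is made of. Acyclicity alone (which is all you get from orthogonality to $iR$) is therefore not enough. The argument that actually works uses orthogonality to \emph{all} finitely generated $M$: for $Y$ acyclic one computes $\Hom_{\K}(M,\Sigma^n Y)\cong \Ext^1_R(M, Z^{n-1}Y)$, where $Z^j Y$ denotes the $j$th cocycle module; vanishing for all $M=R/I$ forces each $Z^n Y$ to be injective by Baer's criterion, so every short exact sequence $0\to Z^n Y\to Y^n\to Z^{n+1}Y\to 0$ splits and $Y$ is contractible. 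You rightly flag that ``one must argue carefully here'', but the mechanism you gesture at is not the one that does the job.
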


Now let us outline the proof that there is an action
\begin{displaymath}
\D(R) \times \sfS(R) \stackrel{\odot}{\to} \sfS(R)
\end{displaymath}
in the sense of Definition~\ref{defn_action}. This comes down to showing one can take K-flat resolutions (see Definition~\ref{defn_Kflat}) of objects of $\D(R)$ in a way which, although not necessarily initially functorial, becomes functorial after tensoring with an acyclic complex of injectives. Our starting point is work of Neeman \cite{NeeFlat} and Murfet \cite{MurfetThesis} which provides a category that naturally acts on $\K(\Inj R)$. As we work here in the affine case Neeman's results are sufficient for our purposes. However, we provide references to the work of Murfet as this makes the results necessary to generalise to the non-affine case apparent.

There is an obvious action of the category $\K(\Flat R)$ on $\K(\Inj R)$. It is given by simply taking the tensor product of complexes. This does in fact define an action as, since $R$ is noetherian, the tensor product of a complex of flats with a complex of injectives is again a complex of injectives. However, $\K(\Flat R)$ may contain many objects which act trivially. Thus we will now pass to a more manageable quotient.

In order to do so we need the notion of pure acyclicity. In \cite{MurfetTAC} a complex $F$ in $\K(\Flat R)$ is defined to be \emph{pure acyclic} if it is exact and has flat syzygies. Such complexes form a triangulated subcategory of $\K(\Flat R)$ which we denote by $\K_{\mathrm{pac}}(\Flat R)$ and we say that a morphism with pure acyclic mapping cone is a \emph{pure quasi-isomorphism}.  The point is that tensoring a pure acyclic complex of flats with a complex of injectives yields a contractible complex by \cite{NeeFlat}*{Corollary 9.7}.

Following Neeman and Murfet we consider $\sfN(\Flat R)$, which is defined to be the quotient $\K(\Flat R)/\K_{\mathrm{pac}}(\Flat R)$. What we have observed above shows the action of $\K(\Flat R)$ on $\K(\Inj R)$ factors via the projection $\K(\Flat R) \to \sfN(\Flat R)$. Next we describe a suitable subcategory of $\sfN(\Flat R)$ which will act on $\sfS(R)$.

Recall from above that $\sfS(R)$ is a compactly generated triangulated category. Consider $E = \coprod_\l E_\l$ where $E_\l$ runs through a set of representatives for the isomorphism classes of compact objects in $\sfS(R)$. We define a homological functor $H\colon \K(\Flat R) \to \mathrm{Ab}$ by setting, for $F$ an object of $\K(\Flat R)$,
\begin{displaymath}
H(F) = H^0(F\otimes_{R} E)
\end{displaymath}
where the tensor product is taken in $\K(R)$. This is a coproduct preserving homological functor since we are merely composing the exact coproduct preserving functor $(-)\otimes_{R} E$ with the coproduct preserving homological functor $H^0$.

In particular every pure acyclic complex lies in the kernel of $H$.

\begin{defn}
With notation as above we denote by $A_{\otimes}(\Inj R)$ the quotient \\ $\ker(H)/\K_{\mathrm{pac}}(\Flat R)$, where
\begin{displaymath}
\ker(H) = \{F\in \K(\Flat R) \; \vert \; H(\Sigma^i F) = 0 \; \forall i \in \ZZ\}.
\end{displaymath}
\end{defn}

The next lemma shows this is the desired subcategory of $\sfN(\Flat R)$.

\begin{lem}[\cite{Stevensonclass}*{Lemma~4.3}]
An object $F$ of $\K(\Flat R)$ lies in $\ker(H)$ if and only if the exact functor
\begin{displaymath}
F\otimes_{R} (-) \colon \K(\Inj R) \to \K(\Inj R)
\end{displaymath}
restricts to
\begin{displaymath}
F\otimes_{R} (-) \colon \sfS(R) \to \sfS(R).
\end{displaymath}
In particular, $A_\otimes(\Inj X)$ consists of the pure quasi-isomorphism classes of objects which act on $\sfS(X)$.
\end{lem}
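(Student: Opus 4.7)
The plan is to translate the restriction condition into a statement about the compact generators $\{E_\lambda\}$ of $\sfS(R)$. Recall that $\sfS(R)$ sits inside $\K(\Inj R)$ as the full subcategory of acyclic complexes; hence ``$F\otimes_R(-)$ restricts to $\sfS(R)\to\sfS(R)$'' is equivalent to saying that $F\otimes_R Y$ is acyclic in $\K(R)$ for every $Y\in\sfS(R)$.

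The ``only if'' direction is immediate. Assuming $F\otimes_R(-)$ restricts to $\sfS(R)$, each $E_\lambda\in\sfS(R)$ gives that $F\otimes_R E_\lambda$ is acyclic, and so (since the tensor product commutes with coproducts and $H^{*}$ commutes with coproducts) $F\otimes_R E$ is acyclic. Applying the same argument to each shift $\Sigma^i F$ yields $H(\Sigma^i F)=H^0(\Sigma^i F\otimes_R E)=0$ for all $i\in\ZZ$, so $F\in\ker(H)$.

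For the converse, suppose $F\in\ker(H)$, so $F\otimes_R E$ is acyclic, whence each summand $F\otimes_R E_\lambda$ is acyclic. Consider the full subcategory
\begin{displaymath}
\sfL := \{\,Y\in\sfS(R)\mid F\otimes_R Y\in\sfS(R)\,\}.
\end{displaymath}
The key step is to verify that $\sfL$ is a localising subcategory of $\sfS(R)$: because $F\otimes_R(-)$ is an exact and coproduct-preserving endofunctor of $\K(\Inj R)$, and $\sfS(R)\subseteq \K(\Inj R)$ is itself closed under suspensions, cones, and coproducts, the subcategory $\sfL$ inherits all the same closure properties. Since $\sfL$ contains every $E_\lambda$, Remark~\ref{rem_gen} (combined with the hypothesis that the $E_\lambda$ form a set of compact generators of $\sfS(R)$) forces $\sfL=\sfS(R)$, which is precisely the required restriction property.

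The concluding assertion about $A_\otimes(\Inj R)$ then follows at once: by \cite{NeeFlat}*{Corollary~9.7} pure acyclic complexes of flats act as zero on $\K(\Inj R)$, and hence on $\sfS(R)$, so the Verdier quotient $\ker(H)/\K_{\mathrm{pac}}(\Flat R)$ parametrises exactly the pure quasi-isomorphism classes of complexes of flats whose tensor action preserves $\sfS(R)$. I do not anticipate any substantive obstacle: the argument is a textbook compact-generation reduction, and the only point that requires care is confirming that the candidate subcategory $\sfL$ is genuinely closed under the triangulated operations in $\sfS(R)$, which follows formally from exactness of $F\otimes_R(-)$ and the internal closure of $\sfS(R)$ in $\K(\Inj R)$.
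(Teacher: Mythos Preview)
The paper does not supply its own proof of this lemma; it is quoted from \cite{Stevensonclass} and no argument is given here, so there is nothing to compare against in this text. Your argument is correct and is precisely the standard compact-generation reduction one would expect: reduce acyclicity of $F\otimes_R Y$ for all $Y\in\sfS(R)$ to the case of the compact generators $E_\lambda$, using that $\sfS(R)$ is localising in $\K(\Inj R)$ and that $F\otimes_R(-)$ is exact and coproduct preserving.

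One small cosmetic slip: you have the labels of the two implications reversed. In the biconditional ``$F\in\ker(H)$ if and only if $F\otimes_R(-)$ restricts'', the \emph{only if} direction is $F\in\ker(H)\Rightarrow$ restricts, whereas the paragraph you call ``only if'' actually assumes the restriction and deduces $F\in\ker(H)$. The mathematics is unaffected; just swap the labels. Also, the aside about ``applying the same argument to each shift $\Sigma^i F$'' is unnecessary: once $F\otimes_R E$ is acyclic you already have $H(\Sigma^i F)=H^i(F\otimes_R E)=0$ for all $i$.
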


Before continuing we recall the notion of K-flatness. By taking K-flat resolutions we get an action of $\D(R)$ via the above category.

\begin{defn}\label{defn_Kflat}
We say that a complex of flat $R$-modules $F$ is \emph{K-flat} provided $F \otimes_{R}(-)$ sends quasi-isomorphisms to quasi-isomorphisms (or equivalently if $F\otimes_{R} E$ is an exact complex for any exact complex of $R$-modules $E$).
\end{defn}

\begin{lem}
There is a fully faithful, exact, coproduct preserving functor 
\begin{displaymath}
\D(R) \to A_{\otimes}(\Inj R).
\end{displaymath}
\end{lem}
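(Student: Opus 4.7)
The plan is to send each $M \in \D(R)$ to a K-flat resolution, and to verify that this produces a well-defined functor landing in $A_\otimes(\Inj R)$. To begin, for each complex $M$ of $R$-modules I would invoke Spaltenstein's theorem to choose a quasi-isomorphism $pM \to M$ with $pM$ a K-flat complex of flat $R$-modules. The assignment $M \mapsto pM$ is not a priori functorial at the chain level, but any two K-flat resolutions of $M$ are related by a pure quasi-isomorphism (Neeman \cite{NeeFlat}, Murfet \cite{MurfetThesis}), and any morphism in $\D(R)$ between K-flat complexes lifts uniquely up to pure quasi-isomorphism. Consequently, passing to the quotient $\sfN(\Flat R) = \K(\Flat R)/\K_{\mathrm{pac}}(\Flat R)$ turns this into a functor $\D(R) \to \sfN(\Flat R)$.

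Next I would verify the image lies in $A_\otimes(\Inj R)$, i.e.\ that each $pM$ is in $\ker(H)$. Since each $E_\lambda$ is a compact object of $\sfS(R) = \K_{\mathrm{ac}}(\Inj R)$, the complex $E = \coprod_\lambda E_\lambda$ is acyclic. As $pM$ is K-flat, the tensor product $pM \otimes_R E$ is again acyclic, so $H(\Sigma^i pM) = H^0(\Sigma^i pM \otimes_R E) = 0$ for all $i \in \ZZ$. Thus $pM$ represents an object of $A_\otimes(\Inj R)$, and the preceding lemma confirms that it indeed acts on $\sfS(R)$.

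For fully faithfulness, I would reduce to showing that for K-flat complexes $F, G$ the canonical map $\Hom_{\sfN(\Flat R)}(F, G) \to \Hom_{\D(R)}(F, G)$ is a bijection. In one direction, a pure quasi-isomorphism is a quasi-isomorphism (since pure acyclic complexes are acyclic), so any roof representing a morphism in $\sfN(\Flat R)$ yields a morphism in $\D(R)$; in the other direction, K-flatness of the source allows one to lift any fraction in $\D(R)$ back to a fraction in $\sfN(\Flat R)$, and to identify two such lifts modulo $\K_{\mathrm{pac}}(\Flat R)$. This embedding statement is essentially Neeman's main theorem on the flat derived category \cite{NeeFlat}.

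Exactness is immediate from the fact that a distinguished triangle $M' \to M \to M''$ in $\D(R)$ can be lifted to a triangle of K-flat resolutions (unique up to pure quasi-isomorphism), and coproduct preservation follows because a coproduct of K-flat complexes is K-flat and hence serves as a K-flat resolution of the coproduct of the underlying complexes. The main obstacle is the functoriality-and-well-definedness step: one must know that K-flat resolutions form a functorial construction after inverting pure quasi-isomorphisms, and that $\D(R)$ sits inside $\sfN(\Flat R)$ as the K-flat part, both of which rest on the structural results of \cite{NeeFlat} and \cite{MurfetThesis}.
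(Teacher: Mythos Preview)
Your proposal is correct and follows essentially the same route as the paper: obtain a functor $\D(R)\to \sfN(\Flat R)$ via K-flat resolutions, then observe it factors through $A_\otimes(\Inj R)$ because K-flat complexes tensor acyclics to acyclics. The paper compresses your first, fourth, and fifth paragraphs into a single citation of \cite{MurfetThesis}*{Theorem~5.5} (which yields the fully faithful, exact, coproduct preserving embedding $\D(R)\hookrightarrow \sfN(\Flat R)$ identifying $\D(R)$ with ${}^\perp\sfN_{\mathrm{ac}}(\Flat R)$), whereas you unpack that result and sketch its ingredients; your verification that the image lands in $\ker(H)$ is exactly the paper's one-line observation.
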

\begin{proof}
There is, by the proof of Theorem 5.5 of \cite{MurfetThesis}, a fully faithful, exact, coproduct preserving functor $\D(R)\to \sfN(\Flat R)$ given by taking K-flat resolutions and inducing an equivalence
\begin{displaymath}
\D(R) \iso {}^{\perp}\sfN_{\mathrm{ac}}(\Flat R).
\end{displaymath}
This functor given by taking resolutions factors via $A_\otimes(\Inj R)$ since K-flat complexes send acyclics to acyclics under the tensor product.
\end{proof}

Taking K-flat resolutions and then tensoring gives the desired action
\begin{displaymath}
(-) \odot (-) \colon \D(R) \times \sfS(R) \to \sfS(R)
\end{displaymath}
 by an easy argument: K-flat resolutions are well behaved with respect to the tensor product so the necessary compatibilities follow from those of the tensor product of complexes.

\begin{rem}\label{rem_boundedabove}
Recall that every complex in $\K^{-}(\Flat R)$, the homotopy category of bounded above complexes of flat $R$-modules, is K-flat. 
 Thus when acting by the subcategory $\K^{-}(\Flat R)$ there is an equality $\odot = \otimes_{R}$.
\end{rem}

We can now apply all of the machinery we have introduced for actions of rigidly-compactly generated triangulated categories. Recall from Example~\ref{ex_ringspec} that 
\begin{displaymath}
\Spc \D(R)^c = \Spc \D^{\mathrm{perf}}(R) \iso \Spec R
\end{displaymath}
Henceforth we will identify these spaces. As in Definition~\ref{defn_actionsupport} we get a notion of support on $\sfS(R)$ with values in $\Spec R$; we will denote the support of an object $A$ of $\sfS(R)$ simply by $\supp A$. As the category $\D(R)$ has a model the local-to-global principle (Theorem \ref{thm_general_ltg}) holds. In particular, not only do we have assignments as in Definition \ref{defn_vis_sigmatau} 
\begin{equation}\label{eqithump}
\left\{ \begin{array}{c}
\text{subsets of}\; \Spec R 
\end{array} \right\}
\xymatrix{ \ar[r]<1ex>^\t \ar@{<-}[r]<-1ex>_\s &} \left\{
\begin{array}{c}
\text{localising subcategories} \; \text{of} \; \sfS(R) \\
\end{array} \right\} 
\end{equation}
where for a localising subcategory $\sfL$ we set
\begin{displaymath}
\s(\sfL) = \supp \sfL = \{\mathfrak{p} \in \Spec R \; \vert \; \mathit{\Gamma}_\mathfrak{p}\sfL \neq 0\}
\end{displaymath}
and for a subset $W\cie \Spec R$ we set
\begin{displaymath}
\t(W) = \{A \in \sfS(R) \; \vert \; \supp A \cie W\}.
\end{displaymath}
we have the following additional information.

\begin{prop}\label{prop_imtau}
Given a subset $W\cie \Spec R$ there is an equality of subcategories
\begin{displaymath}
\t(W) = \loc( \mathit{\Gamma}_\mathfrak{p}\sfS(R) \; \vert \; \mathfrak{p}\in W)
\end{displaymath}
\end{prop}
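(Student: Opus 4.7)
The plan is to deduce this directly from Lemma~\ref{general_im_tau} together with the observation that, in this setting, submodules and localising subcategories coincide. Since $\D(R)$ has a model (say via the standard injective model structure on chain complexes) and $\Spc\D(R)^c\cong\Spec R$ is noetherian, Theorem~\ref{thm_general_ltg} guarantees that the local-to-global principle holds for the action $\D(R)\times\sfS(R)\xrightarrow{\odot}\sfS(R)$. Hence Lemma~\ref{general_im_tau} gives
\[
\t(W)=\loc^*\bigl(\mathit{\Gamma}_\mathfrak{p}\sfS(R)\;\big|\;\mathfrak{p}\in W\cap \s\sfS(R)\bigr).
\]

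Next I would eliminate the two discrepancies between this expression and the claimed one. First, $\D(R)$ is generated as a localising subcategory by its tensor unit $R=\mathbf{1}$, so Lemma~\ref{lem_gen_locpreserving} applies and every localising subcategory of $\sfS(R)$ is automatically a $\D(R)$-submodule; in particular $\loc^*=\loc$ on any collection of objects. Second, for $\mathfrak{p}\notin \s\sfS(R)$ the subcategory $\mathit{\Gamma}_\mathfrak{p}\sfS(R)$ contains only zero objects, so adjoining these generators does not enlarge the localising subcategory. Combining these two observations transforms the identity above into the desired equality
\[
\t(W)=\loc\bigl(\mathit{\Gamma}_\mathfrak{p}\sfS(R)\;\big|\;\mathfrak{p}\in W\bigr).
\]

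As a sanity check, both inclusions can also be seen directly: the containment ``$\supseteq$'' follows from the separation axiom Proposition~\ref{prop_abs_supp_prop}(d), since every object of $\mathit{\Gamma}_\mathfrak{p}\sfS(R)$ has support inside $\{\mathfrak{p}\}\subseteq W$ and $\t(W)$ is closed under coproducts, suspensions and cones; the containment ``$\subseteq$'' is where the local-to-global principle is essential, as it supplies, for any $A\in\t(W)$, a presentation of $\loc^*(A)$ in terms of the local pieces $\mathit{\Gamma}_\mathfrak{p}A$ with $\mathfrak{p}\in\supp A\subseteq W$.

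The only genuine input is thus the verification that the hypotheses of Theorem~\ref{thm_general_ltg} are satisfied for $\D(R)$ acting on $\sfS(R)$; after that, the proposition is a direct packaging of Lemma~\ref{general_im_tau} and Lemma~\ref{lem_gen_locpreserving}. No obstacle beyond this bookkeeping is expected.
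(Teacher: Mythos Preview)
Your proof is correct and follows essentially the same route as the paper, which simply says the result is a restatement of Lemma~\ref{general_im_tau} combined with the fact that localising subcategories are automatically submodules since $R$ generates $\D(R)$. The only cosmetic difference is that the paper cites Lemma~\ref{lem_big_unit_gen} while you cite Lemma~\ref{lem_gen_locpreserving}; your choice is in fact the more appropriate one for the module setting, and your explicit remark that the primes outside $\s\sfS(R)$ contribute only zero generators makes transparent why the intersection with $\s\sfS(R)$ can be dropped.
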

\begin{proof}
This is just a restatement of Lemma~\ref{general_im_tau} combined with Lemma~\ref{lem_big_unit_gen}.
\end{proof}

We now describe $\s\sfS(R)$, the support of the singularity category. In order to describe it we need to recall the following definition.

\begin{defn}
The \emph{singular locus} of $R$ is the set
\begin{displaymath}
\Sing(R) = \{\mathfrak{p}\in \Spec R\; \vert \; R_\mathfrak{p} \; \text{is not regular}\},
\end{displaymath}
i.e.\ the collection of those prime ideals $\mathfrak{p}$ such that $R_\mathfrak{p}$ has infinite global dimension. Recall that this is equivalent to the residue field $k(\mathfrak{p})$ having infinite projective dimension over $R_\mathfrak{p}$.
\end{defn}

\begin{prop}[\cite{Stevensonclass}*{Proposition~5.7}]\label{prop_nontriv}
For any $\mathfrak{p}\in \Sing R$ the object $\mathit{\Gamma}_\mathfrak{p}I_\l Q_\r k(\mathfrak{p})$ is non-zero in $\sfS(R)$. Thus $\mathit{\Gamma}_\mathfrak{p}\sfS(R)$ is non-trivial for all such $\mathfrak{p}$ yielding the equality
\begin{displaymath}
\s \sfS(R) = \Sing R.
\end{displaymath}
\end{prop}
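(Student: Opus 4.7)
The plan is to reduce non-vanishing of $\mathit{\Gamma}_\mathfrak{p} I_\l Q_\r k(\mathfrak{p})$ to non-vanishing of $I_\l Q_\r k(\mathfrak{p})$ via a projection formula, then use the equivalence $\D_{\mathrm{Sg}}(R)\simeq\sfS(R)^c$ of Theorem~5.1 together with Auslander--Buchsbaum--Serre. By Example~3.23, $\mathit{\Gamma}_\mathfrak{p}\mathbf{1}\cong K_\infty(\mathfrak{p})_\mathfrak{p}$ is a bounded K-flat complex, so Remark~4.3 gives $\mathit{\Gamma}_\mathfrak{p}\mathbf{1}\odot(-) = K_\infty(\mathfrak{p})_\mathfrak{p}\otimes_R(-)$. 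Tensoring the defining triangle $K_\infty(\mathfrak{p})\to R\to\check{C}(\mathfrak{p})$ with $k(\mathfrak{p})$ annihilates the \v{C}ech term (every $f\in\mathfrak{p}$ is zero in $k(\mathfrak{p})$, so each $R_f\otimes_R k(\mathfrak{p})=0$), and further tensoring with $R_\mathfrak{p}$ is the identity on the $\mathfrak{p}$-local module $k(\mathfrak{p})$. This gives $\mathit{\Gamma}_\mathfrak{p}\mathbf{1}\otimes^{\mathbf{L}}_R k(\mathfrak{p})\cong k(\mathfrak{p})$ in $\D(R)$.

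Next I would establish the projection formula
\[
\mathit{\Gamma}_\mathfrak{p}\mathbf{1}\odot I_\l Q_\r k(\mathfrak{p}) \;\cong\; I_\l Q_\r\!\left(\mathit{\Gamma}_\mathfrak{p}\mathbf{1}\otimes^{\mathbf{L}}_R k(\mathfrak{p})\right) \;\cong\; I_\l Q_\r k(\mathfrak{p}),
\]
the second isomorphism coming from the previous paragraph. The first isomorphism exploits that $\mathit{\Gamma}_\mathfrak{p}\mathbf{1}$ is K-flat and $R$ is noetherian, so $\mathit{\Gamma}_\mathfrak{p}\mathbf{1}\otimes_R Q_\r k(\mathfrak{p})$ remains a complex of injectives computing $Q_\r(\mathit{\Gamma}_\mathfrak{p}\mathbf{1}\otimes^{\mathbf{L}}_R k(\mathfrak{p}))$; compatibility with $I_\l$ follows from the universal property of the recollement. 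Non-vanishing of $\mathit{\Gamma}_\mathfrak{p}I_\l Q_\r k(\mathfrak{p})$ is then equivalent to $I_\l Q_\r k(\mathfrak{p})\neq 0$, which by Theorem~5.1(3) is equivalent to $k(\mathfrak{p})\notin\D^{\mathrm{perf}}(R)$. If $k(\mathfrak{p})$ were perfect, its localisation would be perfect over $R_\mathfrak{p}$, forcing $\mathrm{pd}_{R_\mathfrak{p}}k(\mathfrak{p})<\infty$; by Auslander--Buchsbaum--Serre this makes $R_\mathfrak{p}$ regular, contradicting $\mathfrak{p}\in\Sing R$. This proves the main claim, and $\mathit{\Gamma}_\mathfrak{p}\sfS(R)\neq 0$ is immediate. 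For the reverse inclusion $\s\sfS(R)\subseteq\Sing R$, if $R_\mathfrak{p}$ is regular then $\D_{\mathrm{Sg}}(R_\mathfrak{p})=0$, hence $\sfS(R_\mathfrak{p})=0$ by compact generation; for any $A\in\sfS(R)$ the localisation $A_\mathfrak{p}$ lies in $\sfS(R_\mathfrak{p})$ (injective $R$-modules localise to injective $R_\mathfrak{p}$-modules) and is therefore null-homotopic, so $\mathit{\Gamma}_\mathfrak{p} A=K_\infty(\mathfrak{p})\otimes_R A_\mathfrak{p}$ vanishes in $\sfS(R)$.

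The technical heart of the argument is the projection formula in the second step. The action $\odot$ is defined via K-flat resolutions in $\D(R)$ whereas $Q_\r$ is defined via injective resolutions, so the two constructions do not a priori commute. Fortunately $\mathit{\Gamma}_\mathfrak{p}\mathbf{1}$ is simultaneously K-flat and represents a smashing idempotent, so both resolutions may be taken strictly: the injective complex $Q_\r k(\mathfrak{p})$ remains a complex of injectives after tensoring with the flat complex $K_\infty(\mathfrak{p})_\mathfrak{p}$, and the comparison then follows formally from the adjunctions characterising $I_\l$ and $Q_\r$ together with uniqueness of localisation triangles.
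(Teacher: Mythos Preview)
The paper does not prove this proposition; it merely cites \cite{Stevensonclass}*{Proposition~5.7}. So there is no in-paper argument to compare against, and I will assess your proposal on its own merits.

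Your overall strategy is sound and the projection-formula reduction is the right idea. However, there is a genuine gap in the step where you invoke Theorem~\ref{thm_recol}(3) to conclude that $I_\l Q_\r k(\mathfrak{p})\neq 0$ is equivalent to $k(\mathfrak{p})\notin \D^{\mathrm{perf}}(R)$. That part of the theorem only asserts that $I_\l Q_\r$ restricted to $\D^b(\modu R)$ induces (up to summands) an equivalence $\D_{\mathrm{Sg}}(R)\to \sfS(R)^c$; it says nothing about the kernel of $I_\l Q_\r$ on arbitrary objects of $\D(R)$. The residue field $k(\mathfrak{p}) = R_\mathfrak{p}/\mathfrak{p}R_\mathfrak{p}$ is not a finitely generated $R$-module unless $\mathfrak{p}$ is maximal, so it does not lie in $\D^b(\modu R)$ and the theorem does not apply to it directly. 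Worse, the literal equivalence you state cannot hold: for non-maximal $\mathfrak{p}$ the object $k(\mathfrak{p})$ is \emph{never} in $\D^{\mathrm{perf}}(R)$, yet when $R_\mathfrak{p}$ is regular you yourself argue in the last paragraph that $\mathit{\Gamma}_\mathfrak{p}\sfS(R)=0$, so $I_\l Q_\r k(\mathfrak{p}) \cong \mathit{\Gamma}_\mathfrak{p} I_\l Q_\r k(\mathfrak{p})$ must vanish.

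The fix is to pass to $R_\mathfrak{p}$ before invoking the recollement. Since $k(\mathfrak{p})$ is $\mathfrak{p}$-local, its injective resolution over $R$ coincides with its injective resolution over $R_\mathfrak{p}$ (injective $R_\mathfrak{p}$-modules are injective over $R$ for noetherian $R$), and over $R_\mathfrak{p}$ the residue field \emph{is} finitely generated. Theorem~\ref{thm_recol}(3) applied to $R_\mathfrak{p}$ then gives $I_\l Q_\r k(\mathfrak{p})\neq 0$ in $\sfS(R_\mathfrak{p})$ precisely when $k(\mathfrak{p})\notin\D^{\mathrm{perf}}(R_\mathfrak{p})$, i.e.\ when $R_\mathfrak{p}$ is not regular. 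One must then check that restriction of scalars $\sfS(R_\mathfrak{p})\to\sfS(R)$ does not kill this object, which is straightforward since restriction is conservative on complexes of injectives. With this adjustment your argument goes through.
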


We now restrict ourselves to hypersurfaces, which we define below, so we can state the classification theorem, and provide a rough sketch of the proof.

\begin{defn}\label{defn_CIring}
Let $(R,\mathfrak{m},k)$ be a noetherian local ring. We say $R$ is a \emph{hypersurface} if its $\mathfrak{m}$-adic completion $\hat{R}$ can be written as the quotient of a regular ring by a regular element (i.e.\ a non-zero divisor). 

Given a not necessarily local commutative noetherian ring $R$ if, when localized at each prime ideal $\mathfrak{p}$ in $\Spec R$, $R_\mathfrak{p}$ is a hypersurface we say that $R$ is \emph{locally a hypersurface}.
\end{defn}

\begin{thm}[\cite{Stevensonclass}*{Theorem~6.13}]\label{cor_hyper_min}
If $R$ is a noetherian ring which is locally a hypersurface then there is an isomorphism of lattices
\begin{displaymath}
\left\{ \begin{array}{c}
\text{subsets of}\; \Sing R 
\end{array} \right\}
\xymatrix{ \ar[r]<1ex>^{\tau} \ar@{<-}[r]<-1ex>_{\sigma} &} \left\{
\begin{array}{c}
\text{localising subcategories of} \; \sfS(R) \\
\end{array} \right\} 
\end{displaymath}
given by the assignments of (\ref{eqithump}). This restricts to the equivalent lattice isomorphisms
\begin{displaymath}
\left\{ \begin{array}{c}
\text{specialization closed} \\ \text{subsets of}\; \Sing R 
\end{array} \right\}
\xymatrix{ \ar[r]<1ex>^{\tau} \ar@{<-}[r]<-1ex>_{\sigma} &} \left\{
\begin{array}{c}
\text{localising subcategories of} \; \sfS(R)\; \\
\text{generated by objects of} \; \sfS(R)^c
\end{array} \right\} 
\end{displaymath}
and
\begin{displaymath}
\left\{ \begin{array}{c}
\text{specialization closed} \\ \text{subsets of}\; \Sing R 
\end{array} \right\}
\xymatrix{ \ar[r]<1ex> \ar@{<-}[r]<-1ex> &} \left\{
\begin{array}{c}
\text{thick subcategories of} \; \D_{\mathrm{Sg}}(R) \\
\end{array} \right\}.
\end{displaymath}
\end{thm}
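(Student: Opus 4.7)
The plan is to assemble the pieces already built in the paper. First, the action $\D(R)\times\sfS(R)\stackrel{\odot}{\to}\sfS(R)$ with $\Spc\D(R)^c \cong \Spec R$ noetherian, together with Theorem~\ref{thm_general_ltg}, gives us the local-to-global principle, Proposition~\ref{general_tau_inj}, and Proposition~\ref{prop_imtau}. Since $R$ acts via its derived category and every localising subcategory of $\D(R)$ is a tensor-ideal (Lemma~\ref{lem_big_unit_gen}), every localising subcategory of $\sfS(R)$ is automatically a $\D(R)$-submodule, so $\Loc(\sfS(R)) = \Loc^*(\sfS(R))$. Combined with Proposition~\ref{prop_nontriv}, which identifies $\sigma\sfS(R) = \Sing R$, we already know that $\tau$ is injective on subsets of $\Sing R$ and that $\sigma\tau(W) = W$ for $W\subseteq\Sing R$. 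So everything reduces to proving surjectivity of $\tau$, equivalently $\tau\sigma(\sfL) = \sfL$ for every localising subcategory $\sfL$.

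The standard device for this is to show that each local category $\mathit{\Gamma}_\mathfrak{p}\sfS(R)$ is \emph{minimal}, i.e.\ has no proper nonzero localising subcategories. Given such minimality, if $A \in \tau\sigma(\sfL)$ then $\supp A \subseteq \sigma\sfL$, so by the local-to-global principle $A$ lies in $\loc(\mathit{\Gamma}_\mathfrak{p}A \mid \mathfrak{p}\in\sigma\sfL)$; for each such $\mathfrak{p}$ the object $\mathit{\Gamma}_\mathfrak{p}\sfL$ is a nonzero localising subcategory of $\mathit{\Gamma}_\mathfrak{p}\sfS(R)$, so minimality forces $\mathit{\Gamma}_\mathfrak{p}\sfS(R)\subseteq\sfL$, hence $A\in\sfL$. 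The reverse inclusion $\sfL \subseteq \tau\sigma(\sfL)$ is automatic.

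The main obstacle is therefore the minimality of $\mathit{\Gamma}_\mathfrak{p}\sfS(R)$ for each $\mathfrak{p} \in \Sing R$, and this is where the hypersurface hypothesis enters. I would argue as follows. Using the separation property (Proposition~\ref{prop_abs_supp_prop}(d)) to reduce to an object supported only at $\mathfrak{p}$, together with the compatibility of the action with taking stalks, one reduces to the local case where $(R,\mathfrak{p},k(\mathfrak{p}))$ is a local hypersurface. A further faithful-flat descent along the completion $R \to \hat R$ (using that the action and supports behave well) reduces to $\hat R = Q/(f)$ with $Q$ regular local and $f$ a non-zero-divisor. Here one exploits matrix factorisations: the singularity category of a hypersurface is $2$-periodic, and $\mathit{\Gamma}_\mathfrak{p}\sfS(\hat R)$ is generated (as a localising subcategory) by the unique compact object corresponding to the residue field, namely $I_\ell Q_r k(\mathfrak{p})$ (nonzero by Proposition~\ref{prop_nontriv}). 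One then shows that for any nonzero $A$ in $\mathit{\Gamma}_\mathfrak{p}\sfS(\hat R)$ the object $k(\mathfrak{p})\odot A$ is a nonzero direct sum of suspensions of copies of $k(\mathfrak{p})$, using the $2$-periodicity and the standard splitting argument over a field; hence $\loc(A)$ contains $k(\mathfrak{p})$ and therefore all of $\mathit{\Gamma}_\mathfrak{p}\sfS(\hat R)$. This is the technical heart and exactly what is carried out in \cite{Stevensonclass}.

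It remains to deduce the two restricted bijections. For the first, since $\Spec R$ is noetherian Remark~\ref{rem_noeth_spec} tells us specialisation closed subsets of $\Sing R$ are exactly the intersections with $\Sing R$ of Thomason subsets of $\Spec R$. Under $\tau$, a specialisation closed $W$ corresponds to $\loc(\mathit{\Gamma}_\mathfrak{p}\sfS(R) \mid \mathfrak{p}\in W)$; each $\mathit{\Gamma}_\mathfrak{p}\sfS(R)$ is generated by a compact (the image of $k(\mathfrak{p})$), so $\tau(W)$ is generated by compacts. Conversely, if $\sfL$ is generated by compacts then $\sigma\sfL$ is a union of supports of compact objects and hence specialisation closed by Proposition~\ref{prop_spc_props}(b) and Remark~\ref{rem_noeth_spec}. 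For the second restricted bijection one combines this with Theorem~\ref{thm_recol}(3), which identifies $\D_{\mathrm{Sg}}(R)$ (up to summands) with $\sfS(R)^c$, together with the standard correspondence between thick subcategories of $\sfS(R)^c$ and localising subcategories of $\sfS(R)$ generated by compacts (obtained, for example, via $\sfL \mapsto \sfL\cap\sfS(R)^c$ and $\sfI\mapsto\loc(\sfI)$, using Thomason's localisation theorem).
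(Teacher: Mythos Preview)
Your proof is correct and follows essentially the same route as the paper: establish the local-to-global principle for the $\D(R)$-action, use Proposition~\ref{general_tau_inj} together with $\sigma\sfS(R)=\Sing R$ to get injectivity of $\tau$, and then deduce $\tau\sigma(\sfL)=\sfL$ from minimality of each $\mathit{\Gamma}_\mathfrak{p}\sfS(R)$, which the paper simply cites as \cite{Stevensonclass}*{Theorem~6.12}. You go further than the paper's sketch by outlining both the minimality argument (reduction to the complete local case and the $2$-periodicity/residue-field splitting) and the two restricted bijections, which the paper explicitly declines to prove; your outlines are in the right spirit, though a couple of the references you invoke for the compact case (e.g.\ Proposition~\ref{prop_spc_props}) pertain to the Balmer spectrum of an essentially small category rather than to supports under an action, so in a full write-up you would need the analogous closedness statement for supports of compacts in $\sfS(R)$.
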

\begin{proof}[Sketch of proof]
We only sketch the proof of the first bijection, the others follow from it but the proof is somewhat more involved.

By Proposition~\ref{general_tau_inj}, combined with the computation of $\s\sfS(R)$ given in Proposition~\ref{prop_nontriv}, we know that $\t$ is injective with left inverse $\s$. Suppose then that $\sfL$ is a localising subcategory and consider
\begin{displaymath}
\t\s\sfL = \loc(\mathit{\Gamma}_\mathfrak{p}\sfS(R) \; \vert \; \mathit{\Gamma}_\mathfrak{p}\sfL\neq 0)
\end{displaymath}
where we have this equality by Proposition~\ref{prop_imtau}. By \cite{Stevensonclass}*{Theorem~6.12} (together with a short reduction to the local case) each of the localising subcategories $\mathit{\Gamma}_\mathfrak{p}\sfS(R)$ is \emph{minimal}, i.e.\ it has no proper non-zero localising subcategories. Thus, if $\mathit{\Gamma}_\mathfrak{p}\sfL$ is non-zero it must, by this minimality, be equal to $\mathit{\Gamma}_\mathfrak{p}\sfS(R)$. By the local-to-global principle we know
\begin{displaymath}
\sfL = \loc(\mathit{\Gamma}_\mathfrak{p}\sfL \; \vert \; \mathfrak{p}\in \Sing R)
\end{displaymath}
and so putting all this together yields
\begin{displaymath}
\sfL = \loc(\mathit{\Gamma}_\mathfrak{p}\sfL \; \vert \; \mathfrak{p}\in \Sing R) = \loc(\mathit{\Gamma}_\mathfrak{p}\sfS(R) \; \vert \; \mathit{\Gamma}_\mathfrak{p}\sfL\neq 0) = \t\s\sfL.
\end{displaymath}
This proves we have the claimed bijection.
\end{proof}

\begin{rem}
One should compare the above approach with that used in Section~\ref{sec_Neeman_ex} (and note the similarity). The strategy evident in these proof sketches is really the thing to remember. Given the local-to-global principle for an action of $\sfT$ on $\K$ it is sufficient to check that each $\mathit{\Gamma}_x\K$ is either zero or has no non-trivial, proper localising submodules; if this is the case one has an isomorphism of lattices
\begin{displaymath}
\Loc^*(\K) \cong \{\text{subsets of} \; \s\K\}
\end{displaymath}
giving a solution to the classification problem for submodules.
\end{rem}

\subsection{Representations of categories over commutative noetherian rings}

As another application of the local-to-global principle we sketch here some ideas coming from recent work of the author with Benjamin Antieau \cite{StevensonAntieau}. We are quite light on details; the idea is just to give another taste of the kind of reductions one can do with the local-to-global principle. The idea is to reduce the study of localising subcategories of derived categories of representations of a small category in $R$-modules, for a commutative noetherian ring $R$, to the study of the same problem over fields. This allows one, for instance, to give a classification of localising subcategories in the derived category of the $R$-linear path algebra of a Dynkin quiver.

We begin with some setup. Again we do not work in the maximum possible generality, but restrict the discussion to make it less technical. Fix a commutative noetherian ring $R$ and let $\sfC$ be a small category. For instance $\sfC$ could be the path category of a quiver, a groupoid, or a poset. The category of (right) $R$-modules is the category
\begin{displaymath}
\Modu_R \sfC = [\sfC^\mathrm{op}, \Modu R]
\end{displaymath}
of contravariant functors from $\sfC$ to $R$-modules. By standard nonsense $\Modu_R \sfC$ is a Grothendieck abelian category with enough projectives and is equivalent to the category of $R$-linear functors
\begin{displaymath}
[R\sfC^\mathrm{op}, \Modu R]_R
\end{displaymath}
where $R\sfC$ denotes the free $R$-linear category on $\sfC$. There is an action
\begin{displaymath}
\Modu R \times \Modu_R \sfC \stackrel{\otimes_R}\to \Modu_R \sfC
\end{displaymath}
defined by pointwise tensoring, i.e.\ given an $R$-module $M$ and a $\sfC$-module $F$ we have for $c\in \sfC$
\begin{displaymath}
(M\otimes_R F)(c) = M\otimes_R(F(c)).
\end{displaymath}

Given a homomorphism of rings $\phi\colon R\to S$ we have the obvious base change and restriction functors giving rise to an adjunction
\begin{displaymath}
\xymatrix{
\Modu_R \sfC \ar[r]<0.5ex>^-{\phi^*} \ar@{<-}[r]<-0.5ex>_-{\phi_*} & \Modu_S \sfC
}
\end{displaymath}
and we can write $\phi^*$ in terms of the action as $S\otimes_R -$ together with the obvious $S$-action.

We can now derive all of this to obtain an action
\begin{displaymath}
\D(R) \times \D(R\sfC) \to \D(R\sfC)
\end{displaymath}
where $\D(R\sfC)$ is our shorthand for the unbounded derived category $\D(\Modu_R \sfC)$ of $\sfC$-modules. As the projectives in $\Modu_R \sfC$ are pointwise $R$-flat we can resolve in $\D(R\sfC)$ to compute this left derived action and so given $\phi\colon R\to S$ as above $S\otimes^\mathbf{L}_R-$ realises $\mathbf{L}\phi^*$. As a particular instance of this we can take, for $\mathfrak{p}\in \Spec R$, the morphism $R\to k(\mathfrak{p})$. We thus get an adjunction
\begin{displaymath}
\xymatrix{
\D(R\sfC) \ar[rr]<0.5ex>^-{k(\mathfrak{p})\otimes^\mathbf{L}_R -} \ar@{<-}[rr]<-0.5ex> && \D(k(\mathfrak{p})\sfC)
}
\end{displaymath}
relating representations over $R$ to representations over $k(\mathfrak{p})$ and arising from the $\D(R)$ action.

We can use these adjunctions to split up the problem of understanding the lattice of localising subcategories of $\D(R\sfC)$ into pieces. Define
\begin{displaymath}
\mathcal{F} \stackrel{s}{\to} \Spec R
\end{displaymath}
by declaring that $s^{-1}(\mathfrak{p}) = \Loc(\D(k(\mathfrak{p})\sfC))$, i.e.\ we have
\begin{displaymath}
\mathcal{F} = \coprod_{\mathfrak{p}\in \Spec R} \Loc(\D(k(\mathfrak{p})\sfC))
\end{displaymath}
and $s$ is the obvious ``collapsing map''. We get poset morphisms
\begin{displaymath}
\Loc\D(R\sfC)
\xymatrix{ \ar[r]<1ex>^f \ar@{<-}[r]<-1ex>_g &} 
\left\{ \begin{array}{c}
\text{sections}\; l\; \text{of} \\ \mathcal{F}\stackrel{s}{\to}\Spec R
\end{array} \right\}
\end{displaymath}
defined on a localising subcategory $\sfL$ and a section $l$ by
\begin{displaymath}
f(\sfL) \colon \mathfrak{p} \mapsto \loc(k(\mathfrak{p})\otimes \sfL) \subseteq \D(k(\mathfrak{p})\sfC)
\end{displaymath}
and
\begin{displaymath}
g(l) = \{X\in \D(R\sfC) \; \vert \; k(\mathfrak{p})\otimes X \in l(\mathfrak{p}) \; \forall\; \mathfrak{p}\in \Spec R \}.
\end{displaymath}

One of the main theorems of \cite{StevensonAntieau} is: 
\begin{thm}
The morphisms $f$ and $g$ are inverse to one another.
\end{thm}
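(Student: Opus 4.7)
The plan is to reduce the statement to an application of the local-to-global principle. Since $\D(R)$ is generated as a localising subcategory by its unit $R$, Lemma~\ref{lem_gen_locpreserving} ensures that every localising subcategory of $\D(R\sfC)$ is automatically a $\D(R)$-submodule, and since $R$ is noetherian, $\Spc\D(R)^c = \Spec R$ is noetherian and $\D(R)$ has an obvious model, so Theorem~\ref{thm_general_ltg} applies. My first step will be to replace the Rickard idempotents of the local-to-global principle with the residue-field base changes appearing in the definitions of $f$ and $g$. Lemma~\ref{lem_kpyo} gives $\loc(\mathit{\Gamma}_\mathfrak{p}R) = \loc(k(\mathfrak{p}))$ in $\D(R)$, and combining this with Lemma~\ref{lem_loc_commutes2} will yield, for every $X \in \D(R\sfC)$, the equality $\loc(\mathit{\Gamma}_\mathfrak{p} X) = \loc(k(\mathfrak{p}) \otimes^{\mathbf L}_R X)$ of localising subcategories of $\D(R\sfC)$. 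Substituting into the local-to-global formula then produces the identity
\begin{displaymath}
\loc(X) \;=\; \loc\bigl(k(\mathfrak{p})\otimes^{\mathbf L}_R X \,\big|\, \mathfrak{p}\in\Spec R\bigr). \tag{$\ast$}
\end{displaymath}

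Granted $(\ast)$, I will argue $g\circ f = \mathrm{id}$ as follows. The inclusion $\sfL\subseteq g(f(\sfL))$ is formal. Conversely, suppose $X\in g(f(\sfL))$, so that $k(\mathfrak{p})\otimes^{\mathbf L}_R X$ lies in $\loc(k(\mathfrak{p})\otimes^{\mathbf L}_R \sfL)$ inside $\D(k(\mathfrak{p})\sfC)$ for every $\mathfrak{p}$. Since restriction of scalars $\D(k(\mathfrak{p})\sfC)\to\D(R\sfC)$ is exact and coproduct-preserving, this containment persists in $\D(R\sfC)$. Because $\sfL$ is a $\D(R)$-submodule I have $k(\mathfrak{p})\otimes^{\mathbf L}_R \sfL \subseteq \sfL$, giving $k(\mathfrak{p})\otimes^{\mathbf L}_R X\in\sfL$ for every $\mathfrak{p}$, and then $(\ast)$ forces $X\in\sfL$.

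For $f\circ g = \mathrm{id}$ the task is to show $\loc(k(\mathfrak{p})\otimes^{\mathbf L}_R g(l)) = l(\mathfrak{p})$ for each prime $\mathfrak{p}$; the inclusion $\subseteq$ is immediate from the definition of $g$. For the reverse, given $Y\in l(\mathfrak{p})$ I will regard $Y$ via restriction as an object $\iota(Y)\in\D(R\sfC)$ and check that $\iota(Y)\in g(l)$. For $\mathfrak{q}\neq\mathfrak{p}$ a pointwise computation gives $k(\mathfrak{q})\otimes^{\mathbf L}_R k(\mathfrak{p}) = 0$ (witnessed by any element lying in one of $\mathfrak{p},\mathfrak{q}$ but not the other, which acts as zero on one factor and invertibly on the other), and hence $k(\mathfrak{q})\otimes^{\mathbf L}_R\iota(Y) = 0\in l(\mathfrak{q})$; for $\mathfrak{q}=\mathfrak{p}$, associativity rewrites $k(\mathfrak{p})\otimes^{\mathbf L}_R\iota(Y)$ as $\Tor_*^R(k(\mathfrak{p}),k(\mathfrak{p}))\otimes^{\mathbf L}_{k(\mathfrak{p})} Y$, a coproduct of suspensions of $Y$ that lies in $l(\mathfrak{p})$. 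Since $Y$ occurs as the $\Tor_0$-summand of $k(\mathfrak{p})\otimes^{\mathbf L}_R \iota(Y)$, it belongs to $\loc(k(\mathfrak{p})\otimes^{\mathbf L}_R g(l))$.

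The main obstacle I anticipate is establishing $(\ast)$, as this is what turns the abstract Rickard-idempotent machinery into something one can compute with directly in terms of residue fields; once the translation is in place, both directions reduce to formal bookkeeping, the only subtlety being the transfer of localising subcategories between $\D(k(\mathfrak{p})\sfC)$ and $\D(R\sfC)$ under restriction of scalars.
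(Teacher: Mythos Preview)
Your proposal is correct and follows essentially the same route as the paper. The paper packages the argument into two auxiliary propositions\textemdash the residue-field form of the local-to-global principle (your $(\ast)$) and the identity $g(l)=\loc(l(\mathfrak{p})\mid\mathfrak{p}\in\Spec R)$\textemdash and then reads off $gf=\mathrm{id}$ and $fg=\mathrm{id}$ by direct computation; you instead establish $(\ast)$ and then carry out the content of the second proposition inline, using exactly the same ingredients (the vanishing $k(\mathfrak{q})\otimes^{\mathbf L}_R k(\mathfrak{p})=0$ for $\mathfrak{q}\neq\mathfrak{p}$ and the $\Tor$ description at $\mathfrak{q}=\mathfrak{p}$). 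The only cosmetic difference is that the paper's $gf$ computation appeals to its second proposition whereas you argue the two inclusions directly via the submodule property of $\sfL$; both are equally valid and ultimately rest on $(\ast)$.
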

This fact is a direct application of the local-to-global principle for the action of $\D(R)$ on $\D(R\sfC)$. Let us close by giving an outline of the proof.

By Theorem~\ref{thm_general_ltg} applied to the $\D(R)$ action we can reduce to checking that there is a lattice isomorphism
\begin{displaymath}
\Loc \mathit{\Gamma}_\mathfrak{p}\D(R\sfC) \cong \Loc \D(k(\mathfrak{p})\sfC)
\end{displaymath}
for each $\mathfrak{p}\in \Spec R$. However, one can proceed more directly. The main inputs for this direct approach to proving the theorem are the following two technical results. The first is essentially a consequence of the observation we have made in Lemma~\ref{lem_kpyo}

\begin{prop}
Given $X\in \D(R\sfC)$ there is an equality
\begin{displaymath}
\loc(X) = \loc(k(\mathfrak{p})\otimes X\; \vert \; \mathfrak{p}\in \Spec R).
\end{displaymath}
In particular, $k(\mathfrak{p})\otimes X\cong 0$ for all $\mathfrak{p}$ if and only if $X\cong 0$. 
\end{prop}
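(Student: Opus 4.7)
The plan is to apply the local-to-global principle (Theorem~\ref{thm_general_ltg}) to the action $\D(R)\times\D(R\sfC)\to\D(R\sfC)$ and then replace the Rickard idempotents that appear with residue fields using Lemma~\ref{lem_kpyo}.

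First I would observe that every localising subcategory of $\D(R\sfC)$ is automatically a $\D(R)$-submodule. Indeed, $\D(R)=\loc(R)$ is generated by its tensor unit, so Lemma~\ref{lem_gen_locpreserving} applies, and consequently $\loc^*(S)=\loc(S)$ for any set of objects $S\subseteq\D(R\sfC)$. This lets us pass freely between $\loc$ and $\loc^*$ throughout the argument. Since $\D(R)$ is rigidly-compactly generated with noetherian spectrum $\Spc\D(R)^c\cong\Spec R$, and admits a model (it is the homotopy category of the projective model structure on chain complexes), Theorem~\ref{thm_general_ltg} grants the local-to-global principle for the action, which together with the previous observation yields
\begin{displaymath}
\loc(X)=\loc^*(X)=\loc^*(\mathit{\Gamma}_\mathfrak{p}X\;\vert\;\mathfrak{p}\in\Spec R)=\loc(\mathit{\Gamma}_\mathfrak{p}X\;\vert\;\mathfrak{p}\in\Spec R).
\end{displaymath}

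Next I would replace each $\mathit{\Gamma}_\mathfrak{p}X=\mathit{\Gamma}_\mathfrak{p}R\otimes X$ by $k(\mathfrak{p})\otimes X$ up to generated localising subcategory. By Lemma~\ref{lem_loc_commutes2} applied to the singleton collections $\{\mathit{\Gamma}_\mathfrak{p}R\}\subseteq\D(R)$ and $\{X\}\subseteq\D(R\sfC)$ (and similarly for $\{k(\mathfrak{p})\}$), one has
\begin{displaymath}
\loc(\mathit{\Gamma}_\mathfrak{p}X)=\loc(\mathit{\Gamma}_\mathfrak{p}R)*\loc(X)\quad\text{and}\quad\loc(k(\mathfrak{p})\otimes X)=\loc(k(\mathfrak{p}))*\loc(X).
\end{displaymath}
Lemma~\ref{lem_kpyo} states precisely that $\loc(\mathit{\Gamma}_\mathfrak{p}R)=\loc(k(\mathfrak{p}))$ inside $\D(R)$, so the right-hand sides coincide and hence $\loc(\mathit{\Gamma}_\mathfrak{p}X)=\loc(k(\mathfrak{p})\otimes X)$ for every prime $\mathfrak{p}$.

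Assembling these steps gives
\begin{displaymath}
\loc(X)=\loc(\mathit{\Gamma}_\mathfrak{p}X\;\vert\;\mathfrak{p}\in\Spec R)=\loc(k(\mathfrak{p})\otimes X\;\vert\;\mathfrak{p}\in\Spec R),
\end{displaymath}
which is the equality claimed. The ``in particular'' statement is immediate: if $k(\mathfrak{p})\otimes X\cong 0$ for every $\mathfrak{p}$ then the right-hand side is $\loc(0)=0$, forcing $X\cong 0$, while the converse is trivial. The main conceptual obstacle is really just the identification of $\loc(\mathit{\Gamma}_\mathfrak{p}R)$ with $\loc(k(\mathfrak{p}))$ in $\D(R)$, which was already dealt with in Lemma~\ref{lem_kpyo} by writing the localised stable Koszul complex as a homotopy colimit of ordinary Koszul complexes with finite-length homology; once that identification is in hand the proof is a formal consequence of the local-to-global principle together with Lemma~\ref{lem_loc_commutes2}.
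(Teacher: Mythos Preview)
Your argument is correct and matches the approach the paper intends: the paper does not give a detailed proof of this proposition, only remarking that it ``is essentially a consequence of the observation we have made in Lemma~\ref{lem_kpyo}'', and you have filled in precisely those details by combining the local-to-global principle with Lemma~\ref{lem_loc_commutes2} and the identification $\loc(\mathit{\Gamma}_\mathfrak{p}R)=\loc(k(\mathfrak{p}))$ from Lemma~\ref{lem_kpyo}.
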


\begin{prop}
Given a section $l$ of $\mathcal{F}\stackrel{s}{\to}\Spec R$ there is an equality
\begin{displaymath}
g(l) = \{X\; \vert \; k(\mathfrak{p})\otimes X\in l(\mathfrak{p}) \; \forall\; \mathfrak{p}\in \Spec R\} = \loc(l(\mathfrak{p})\; \vert \; \mathfrak{p}\in \Spec R)
\end{displaymath}
where $l(\mathfrak{p})$ is viewed as a subcategory of $\D(R\sfC)$ via restricting along $R\to k(\mathfrak{p})$.
\end{prop}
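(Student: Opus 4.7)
The first equality is simply the definition of $g(l)$, so the real content is the second. I will establish both containments, writing $\sfM = \loc(l(\mathfrak{p})\;\vert\;\mathfrak{p}\in \Spec R)\cie \D(R\sfC)$ for brevity.

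For the inclusion $\sfM \cie g(l)$, note that $g(l)$ is a localising subcategory of $\D(R\sfC)$ because each functor $k(\mathfrak{q})\otimes(-)$ is exact and coproduct preserving and each $l(\mathfrak{q})$ is localising. Hence it suffices to check that for every prime $\mathfrak{p}$, every $Y\in l(\mathfrak{p})$, and every $\mathfrak{q}\in \Spec R$, the object $k(\mathfrak{q})\otimes Y$ lies in $l(\mathfrak{q})$. The standard change-of-scalars isomorphism
\begin{displaymath}
k(\mathfrak{q})\otimes_R^\mathbf{L} Y \iso \bigl(k(\mathfrak{q})\otimes_R^\mathbf{L} k(\mathfrak{p})\bigr)\otimes_{k(\mathfrak{p})}^\mathbf{L} Y
\end{displaymath}
reduces this to the behaviour of $k(\mathfrak{q})\otimes_R^\mathbf{L} k(\mathfrak{p})$. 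If $\mathfrak{q}\neq \mathfrak{p}$, the latter vanishes in $\D(R)$, being a tensor product of two objects with disjoint supports, so $k(\mathfrak{q})\otimes Y\iso 0 \in l(\mathfrak{q})$. If $\mathfrak{q}=\mathfrak{p}$, then $k(\mathfrak{p})\otimes_R^\mathbf{L} k(\mathfrak{p})$ is a bounded complex of $k(\mathfrak{p})$-vector spaces and hence formal, giving a decomposition
\begin{displaymath}
k(\mathfrak{p})\otimes Y \iso \bigoplus_{i\geq 0}\Sigma^i\bigl(\Tor^R_i(k(\mathfrak{p}),k(\mathfrak{p}))\otimes_{k(\mathfrak{p})} Y\bigr)
\end{displaymath}
as a coproduct of suspensions of copies of $Y$; this lies in $l(\mathfrak{p})$ since $l(\mathfrak{p})$ is localising and contains $Y$.

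For the reverse inclusion $g(l)\cie \sfM$, pick any $X\in g(l)$; by definition $k(\mathfrak{p})\otimes X \in l(\mathfrak{p})\cie \sfM$ for every prime $\mathfrak{p}$. The preceding proposition, which is the form of the local-to-global principle for the $\D(R)$-action on $\D(R\sfC)$, then yields
\begin{displaymath}
\loc(X) = \loc(k(\mathfrak{p})\otimes X\;\vert\;\mathfrak{p}\in \Spec R) \cie \sfM,
\end{displaymath}
so $X\in \sfM$ and we are done.

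The argument is thus quite formal once the previous proposition is in hand. The only substantive input is the change-of-scalars identity together with the pointwise fact that residue fields at distinct primes are orthogonal, and over a single residue field one has formality for free. I expect this change-of-scalars bookkeeping, rather than any serious idea, to be the main (and quite routine) obstacle.
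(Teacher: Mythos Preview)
The paper does not actually include a proof of this proposition: it is stated (together with the preceding one) as a technical input taken from \cite{StevensonAntieau}, and then used to deduce the main theorem. So there is no in-text argument to compare against, and the question is simply whether your proof is correct. It is.

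Your two-containment strategy is the natural one. The reverse inclusion $g(l)\cie\sfM$ is exactly an application of the preceding proposition, as you say. For the forward inclusion, the change-of-scalars identity $k(\mathfrak{q})\otimes_R^\mathbf{L} Y \iso (k(\mathfrak{q})\otimes_R^\mathbf{L} k(\mathfrak{p}))\otimes_{k(\mathfrak{p})}^\mathbf{L} Y$ is valid pointwise and hence for the action, and the orthogonality $k(\mathfrak{q})\otimes_R^\mathbf{L} k(\mathfrak{p})\iso 0$ for $\mathfrak{q}\neq\mathfrak{p}$ follows from Lemma~\ref{lem_kpyo} (both residue fields lie in $\mathit{\Gamma}_\mathfrak{p}\D(R)$, $\mathit{\Gamma}_\mathfrak{q}\D(R)$ respectively, and these tensor to zero). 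One small slip: $k(\mathfrak{p})\otimes_R^\mathbf{L} k(\mathfrak{p})$ need not be a \emph{bounded} complex when $R_\mathfrak{p}$ is not regular, but boundedness is irrelevant to your argument---formality over a field holds regardless, and $l(\mathfrak{p})$ is closed under arbitrary coproducts, so the possibly infinite decomposition $\bigoplus_{i\geq 0}\Sigma^i(\Tor^R_i(k(\mathfrak{p}),k(\mathfrak{p}))\otimes_{k(\mathfrak{p})} Y)$ still lands in $l(\mathfrak{p})$. With that word removed your proof goes through cleanly.
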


Fix a localising subcategory $\sfL$ of $\D(R\sfC)$ and a section $l$ of $s$. Given these two results one proves the theorem via the computations
\begin{align*}
gf(\sfL) &=\loc(f(\sfL)(\mathfrak{p})\; \vert \; \mathfrak{p}\in \Spec R) \\
&= \loc(k(\mathfrak{p}) \otimes \sfL\; \vert \; \mathfrak{p}\in \Spec R) \\
&= \sfL
\end{align*}
where the final equality holds by the first proposition, and
\begin{align*}
fg(l)(\mathfrak{p}) &= \loc(k(\mathfrak{p})\otimes g(l))\\
&= \loc(k(\mathfrak{p})\otimes l(\mathfrak{p})) \\
&= l(\mathfrak{p}).
\end{align*}

It remains an open and challenging problem to better understand the smashing subcategories and telescope conjecture for $\D(R\sfC)$. Furthermore, very little is known about the lattices $\Loc \D(k\sfC)$ for a field $k$ except in certain very special cases.

\bibliography{greg_bib}

\end{document}